\documentclass[a4paper,10pt,leqno]{amsart}
\title{Approximating $L^2$-invariants and homology growth}
\author{L\"uck, W.}
        \address{Mathematisches Institut der Universit\"at Bonn\\
                Endenicher Allee 60\\
                53115 Bonn, Germany}
         \email{wolfgang.lueck@him.uni-bonn.de}
          \urladdr{http://www.him.uni-bonn.de/lueck}
         \date{October 2012}
              \keywords{Fuglede-Kadison determinants, $L^2$-torsion, 
homological growth, approximation theorems, fibrations}
     \subjclass[2010]{22D25, 46L99, 55N10, 58J52}


\usepackage{hyperref}
\usepackage{color}
\usepackage{pdfsync}
\usepackage{calc}
\usepackage{enumerate,amssymb}
\usepackage[arrow,curve,matrix,tips,2cell]{xy}
  \SelectTips{eu}{10} \UseTips
  \UseAllTwocells

\DeclareMathAlphabet\EuR{U}{eur}{m}{n}
\SetMathAlphabet\EuR{bold}{U}{eur}{b}{n}

\makeindex             



\theoremstyle{plain}
\newtheorem{theorem}{Theorem}[section]
\newtheorem{lemma}[theorem]{Lemma}
\newtheorem{proposition}[theorem]{Proposition}
\newtheorem{corollary}[theorem]{Corollary}
\newtheorem{conjecture}[theorem]{Conjecture}

\theoremstyle{definition}

\newtheorem{definition}[theorem]{Definition}
\newtheorem{example}[theorem]{Example}

\newtheorem{remark}[theorem]{Remark}
\newtheorem{notation}[theorem]{Notation}
\newtheorem{setup}[theorem]{Setup}

\newtheorem{question}[theorem]{Question}

{\catcode`@=11\global\let\c@equation=\c@theorem}





\newcommand{\comsquare}[8]                   
{\begin{CD}
#1 @>#2>> #3\\
@V{#4}VV @V{#5}VV\\
#6 @>#7>> #8
\end{CD}
}

\newcommand{\xycomsquare}[8]                   
{\xymatrix
{#1 \ar[r]^{#2} \ar[d]^{#4} &
#3 \ar[d]^{#5}  \\
#6\ar[r]^{#7} &
#8
}
}

\newcommand{\xycomsquareminus}[8]                      
{\xymatrix{#1 \ar[r]^-{#2} \ar[d]^-{#4} &
#3 \ar[d]^-{#5}  \\
#6\ar[r]^-{#7} &
#8
}
}





\newcommand{\caln}{{\mathcal N}}


\newcommand{\IC}{{\mathbb C}}

\newcommand{\IF}{{\mathbb F}}

\newcommand{\IN}{{\mathbb N}}

\newcommand{\IQ}{{\mathbb Q}}
\newcommand{\IR}{{\mathbb R}}

\newcommand{\IZ}{{\mathbb Z}}




\newcommand{\aut}{\operatorname{aut}}

\newcommand{\coker}{\operatorname{coker}}

\newcommand{\ev}{\operatorname{ev}}

\newcommand{\id}{\operatorname{id}}

\newcommand{\im}{\operatorname{im}}

\newcommand{\mg}{\operatorname{d}}

\newcommand{\pr}{\operatorname{pr}}

\newcommand{\rk}{\operatorname{rk}}

\newcommand{\Tor}{\operatorname{Tor}}
\newcommand{\tors}{\operatorname{tors}}


\newcommand{\pt}{\{\bullet\}}



\newcommand{\higherlim}[3]{{\setbox1=\hbox{\rm lim}
        \setbox2=\hbox to \wd1{\leftarrowfill} \ht2=0pt \dp2=-1pt
        \mathop{\vtop{\baselineskip=5pt\box1\box2}}
        _{#1}}^{#2}#3}

\newcommand{\version}[1]                    
{\begin{center} last edited on #1\\
last compiled on \today\\
name of texfile: \jobname
\end{center}
}

\newcounter{commentcounter}


\begin{document}

\typeout{----------------------------  l2approx.Tex  --------------------------}


\typeout{------------------------- Abstract ------------------------------------}

\begin{abstract}
  In this paper we consider the asymptotic behavior of invariants such as Betti
  numbers, minimal numbers of generators of singular homology, the order of the torsion subgroup of
  singular homology, and torsion invariants. We will show that all these
  vanish in the limit if the $CW$-complex under consideration fibers in a
  specific way. In particular we will show that all these vanish in the limit if
  one considers an aspherical closed  manifold which admits a non-trivial
  $S^1$-action or whose fundamental group contains an infinite normal elementary
  amenable subgroup. By considering classifying spaces we also get results for
  groups.
\end{abstract}

\maketitle


 \typeout{---------------------   Section 0: Introduction ----------------------}

\setcounter{section}{-1}
\section{Introduction}
\label{sec:introduction}

In this paper we consider the asymptotic behavior of invariants such as Betti
numbers, minimal numbers of generator of singular homology, the order of the
torsion subgroup of singular homology, and torsion invariants. We will use the
following setup.
\begin{setup}\label{set:inverse_systems}
  Let $G$ be a group together with an inverse system $\{G_i \mid i \in I\}$ of
  normal subgroups of $G$ directed by inclusion over the directed set $I$ such
  that $[G:G_i]$ is finite for all $i \in I$ and $\bigcap_{i \in I} G_i = \{1\}$. Let $K$ be a field.
\end{setup}

We want to study for a $G$-covering $\overline{X} \to X$ of a connected finite $CW$-complex $X$ the nets
\[\left(\frac{b_n(G_i\backslash X;K)}{[G:G_i]}\right)_{i \in I}, 
\left(\frac{\ln\left(\left|\tors(H_n(G_i\backslash \overline{X}))\right|\right)}{[G:G_i]}\right)_{i \in I},
\left(\frac{\mg(H_n(G_i\backslash \overline{X}))}{[G:G_i]}\right)_{i \in I},
\]
\[
\left(\frac{\mg(G_i)}{[G:G_i]}\right)_{i \in I}, \left(\frac{\rho^{(2)}(G_i\backslash \overline{X};\caln(\{1\}))}{[G:G_i]} \right)_{i \in I},
\quad \text{and}\quad 
\left(\frac{\rho^{\IZ}(G_i\backslash \overline{X})}{[G:G_i]} \right)_{i \in I},
\]

where $\mg$ denotes the minimal number of generators, $b_n(G_i\backslash X;K)$
the  $n$th Betti number with coefficients in the field $K$, $\tors(H_n(G_i\backslash \overline{X}))$
the torsion subgroup of the abelian group given by the $n$th singular homology with integer coefficients, 
$\rho^{(2)}(G_i\backslash \overline{X};\caln(\{1\}))$
the $L^2$-torsion (with respect to the trivial group) and $\rho^{\IZ}(G_i\backslash \overline{X})$ the integral torsion. 

Of particular interest is the sequence
$\frac{\ln\left(\left|\tors(H_n(G_i\backslash \overline{X}))\right|\right)}{[G:G_i]}$ for which most of the known results
are restricted to infinite cyclic fundamental groups (see for 
instance~\cite{Bergeron-Venkatesh(2010),Silver-Williams(2002),Silver-Williams(2004)}).
The other sequences have already been studied intensively in the literature, see for instance
\cite{Abert-Jaikin-Zapirain-Nikolov(2011),Abert-Nikolov(2012),Bergeron-Linnell-Lueck-Sauer(2012),%
Calegari-Emerton(2009bounds),Calegari-Emerton(2011),Clair-Whyte(2003),Ershof-Lueck(2012),%
Lackenby(2005expanders), Lackenby(2009propery_tau),%
Lackenby(2009_new_bound),Linnell-Lueck-Sauer(2011),Lueck(1994c),Osin(2011_rankgradient)}.

We will concentrate on cases, where we can show that these sequences converge to
zero, for instance, when $X$ is a closed aspherical manifold, $G = \pi_1(X)$,
$\Phi = \id_G$, $\overline{X} = \widetilde{X}$ and we assume either that $X$
carries a non-trivial $S^1$-action or $G$ contains a non-trivial elementary
amenable normal subgroup (see
Corollary~\ref{cor:Groups_containing_a_normal_infinite_nice_subgroups}). Thus we
can prove in this special case the Approximation Conjectures of
Subsection~\ref{subsec:Approximation_Conjectures} which present one of the basic
motivation for this paper.

If one considers $\overline{X} = EG$, one obtains statements about the group $G$
itself as explained in Subsection~\ref{subsec:Homological_growth_of_groups}.

We briefly discuss the related notions of rank gradient and cost in
Subsection~\ref{subsec:Exponent_gradient}.

This paper is financially supported by the Leibniz-Preis of the author. Many
thanks of the author go to Nicolas Bergeron and Roman Sauer for fruitful
discussions on the topic.



 \typeout{---------------------   Section 1: Statement of Results ----------------------}

\section{Statement of results}
\label{sec:statement_of_results}

\subsection{Main result}
\label{subsec:main_result}

Next we state our main technical result.
All other results are essentially consequences of it.
Explanations follow below.

\begin{theorem}[Fibrations]
  \label{the:fibrations}
  Let $F\xrightarrow{j} X \xrightarrow{f} B$ be a fibration of connected
  $CW$-complexes.    Consider a homomorphism
  $\phi \colon \pi_1(X) \to G$.  Let $p \colon \overline{X} \to X$ be 
  the associated $G$-covering.  Let $G_1(F) \subseteq \pi_1(F)$ be Gottlieb's
  subgroup of the fundamental group of $F$ (see~Definition~\ref{def:G_1(F)}.) 
  Suppose that the image of $G_1(F)$ under the
  composite $\phi \circ \pi_1(j) \colon \pi_1(F) \to G$ is infinite.

  If $d$ is a natural number such that the $(d+1)$-skeleton of $X$ is finite, then:

\begin{enumerate}

\item \label{the:fibrations:limit_of_mg}
We get for all $n \le d$
\[
\lim_{i\in I} \frac{\mg\bigl(H_n(G_i\backslash \overline{X})\bigr)}{[G:G_i]} 
= 0;
\]

\item \label{the:fibrations:limit_of_torsion_in_homology}
We get for all $n \le d$
\[
\lim_{i\in I} \frac{\ln\left(\left|\tors\bigl(H_n(G_i\backslash \overline{X})\bigr)\right|\right)}{[G:G_i]} 
= 0;
\]

\item \label{the:fibrations:L2-Betti}
We get for all $n \le d$
\[
b_n^{(2)}\bigl(\overline{X};\caln(G)\bigr) = \lim_{i \to \infty} \frac{b_n(G_i\backslash X;K)}{[G:G_i]} = 0;
\]

\item \label{the:fibrations:limit_of_torsion}
Suppose that $X$ is a  connected finite $CW$-complex. Then
\[
\lim_{i\in I} \frac{\rho^{(2)}\bigl(G_i\backslash \overline{X};\caln(\{1\})\bigr)}{[G:G_i]}
= \lim_{i\in I} \frac{\rho^{\IZ}\bigl(G_i\backslash \overline{X}\bigr)}{[G:G_i]} = 0;
\]

\item \label{the:fibrations:L2-torsion}
Suppose that  both $F$ and $B$ are connected finite $CW$-complexes and  that 
$\rho^{(2)}(\overline{F};\caln(H)) = 0$, where
$H$ is the image of the composite 
$\pi_1(F) \xrightarrow{\pi_1(j)} \pi_1(E) \xrightarrow{\phi} G$ and
$\overline{F}$ is the covering associated to the induced epimorphism 
$\pi_1(F) \to H$. Then the $L^2$-torsion $\rho^{(2)}\bigl(\overline{X};\caln(G)\bigr)$
is defined and satisfies
\[
\rho^{(2)}\bigl(\overline{X};\caln(G)\bigr) = 0.
\]

\end{enumerate}
\end{theorem}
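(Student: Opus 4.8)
The plan is to compute $\rho^{(2)}\bigl(\overline{X};\caln(G)\bigr)$ through the skeletal filtration of the base $B$, reducing it to the $L^2$-torsion of the fibre, which vanishes by hypothesis. First I would make two harmless reductions. Replacing $G$ by the image of $\phi$ — and using that induction along an inclusion of groups leaves $L^2$-Betti numbers, the determinant-class condition, and $L^2$-torsion unchanged — I may assume that $\phi$ is surjective. Then $\pi_1(j)\bigl(\pi_1(F)\bigr) = \ker\bigl(\pi_1(f)\colon\pi_1(X)\to\pi_1(B)\bigr)$ is normal in $\pi_1(X)$, so $H$ is a normal subgroup of $G$; it is infinite because it contains the image of Gottlieb's subgroup $G_1(F)$. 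As $F$ and $B$ are finite, $X$ is homotopy equivalent to a finite $CW$-complex, so $\overline{X}$ is a finite free $G$-$CW$-complex, and $\rho^{(2)}\bigl(\overline{X};\caln(G)\bigr)$ will be defined as soon as I show that the $\caln(G)$-chain complex $C^{(2)}_\ast(\overline{X}) := \caln(G)\otimes_{\IZ G} C_\ast(\overline{X})$ is det-$L^2$-acyclic.

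Next I would pass to a model in which $f$ is cellular for a $CW$-structure on $X$ such that each $f^{-1}\bigl(B^{(k)}\bigr)$ is a subcomplex (for instance by replacing the fibration with a fibre bundle, or by building $X$ cell by cell over $B$). Over a $k$-cell $\sigma$ of $B$ the fibration is trivial, so $p^{-1}\bigl(f^{-1}(\sigma)\bigr)$ is $G$-homeomorphic to $D^k\times\bigl(G\times_H\overline{F}_\sigma\bigr)$, where $\overline{F}_\sigma$ is a copy of the covering $\overline{F}$ of $F$, read off through the chosen trivialisation; varying $\sigma$ or the trivialisation changes $\overline{F}_\sigma$ only through the fibre transport, that is, by an $H$-equivariant homotopy equivalence up to an automorphism of $H$. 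Filtering $C^{(2)}_\ast(\overline{X})$ by the subcomplexes $F_k := C^{(2)}_\ast\bigl(p^{-1}(f^{-1}(B^{(k)}))\bigr)$, the quotient $F_k/F_{k-1}$ is isomorphic to $\bigoplus_\sigma\caln(G)\otimes_{\caln(H)} C^{(2)}_{\ast-k}\bigl(\overline{F}_\sigma;\caln(H)\bigr)$, the sum running over the $k$-cells $\sigma$ of $B$. By the hypothesis $\rho^{(2)}\bigl(\overline{F};\caln(H)\bigr) = 0$, together with the invariance of the determinant-class condition and of $L^2$-torsion under induction along $H\hookrightarrow G$ and under automorphisms of $H$, each summand, and hence $F_k/F_{k-1}$, is det-$L^2$-acyclic with vanishing $L^2$-torsion.

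Finally I would run up the finite filtration $0 = F_{-1}\subseteq F_0\subseteq\cdots\subseteq F_{\dim B} = C^{(2)}_\ast(\overline{X})$ using the sum formula for $L^2$-torsion (see~\cite{Lueck(2002)}): if in a short exact sequence $0\to C'_\ast\to D'_\ast\to E'_\ast\to 0$ of finite $\caln(G)$-chain complexes the subcomplex $C'_\ast$ and the quotient $E'_\ast$ are det-$L^2$-acyclic with vanishing $L^2$-torsion, then so is $D'_\ast$, with $L^2$-torsion the sum of the two. Applying this inductively to $0\to F_{k-1}\to F_k\to F_k/F_{k-1}\to 0$ shows that each $F_k$, and in particular $C^{(2)}_\ast(\overline{X})$, is det-$L^2$-acyclic with $\rho^{(2)} = 0$; thus $\rho^{(2)}\bigl(\overline{X};\caln(G)\bigr)$ is defined and equals the alternating sum over the cells of $B$ of the contributions of the fibre, namely $\chi(B)\cdot\rho^{(2)}\bigl(\overline{F};\caln(H)\bigr) = 0$. (The $L^2$-acyclicity of $\overline{X}$ that drops out along the way is also part~\ref{the:fibrations:L2-Betti}.) I expect the homological algebra to be routine; the two delicate points are the geometric reductions — fixing a $CW$-model in which $f$ is cellular with the $f^{-1}(B^{(k)})$ subcomplexes, and tracking the fibre transport so that each associated graded piece is genuinely induced from $\caln(H)$, which works precisely because that transport twists $C^{(2)}_\ast\bigl(\overline{F};\caln(H)\bigr)$ only by an automorphism of $H$, under which $L^2$-torsion and the determinant-class condition do not change. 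Equivalently, the whole computation is the fibration formula $\rho^{(2)}\bigl(\overline{X};\caln(G)\bigr) = \chi(B)\cdot\rho^{(2)}\bigl(\overline{F};\caln(H)\bigr)$ in the $G$-covering setting, whose proof is this argument.
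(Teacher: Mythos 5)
Your proposal addresses only assertion~\eqref{the:fibrations:L2-torsion} of the theorem, and for that assertion it is essentially the paper's own route: the paper simply invokes the fibration formula for $L^2$-torsion (\cite[Theorem~3.100]{Lueck(2002)}), whose proof is exactly your skeletal filtration of $B$ together with the sum formula and the induction $\caln(G)\otimes_{\caln(H)}-$. Even there you have one real gap: to run the filtration you need each graded piece, hence $\overline{F}$ itself, to be det-$L^2$-acyclic, and the hypothesis only gives $\rho^{(2)}(\overline{F};\caln(H))=0$, not $b_n^{(2)}(\overline{F};\caln(H))=0$. You never use the Gottlieb-subgroup assumption beyond noting that $H$ is infinite, whereas the paper deduces $b_n^{(2)}(\overline{F};\caln(H))=0$ precisely by applying assertion~\eqref{the:fibrations:L2-Betti} to the fibration $F\to F\to \pt$, which rests on the condition that the image of $G_1(F)$ in $H$ is infinite.

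The larger problem is that assertions~\eqref{the:fibrations:limit_of_mg}, \eqref{the:fibrations:limit_of_torsion_in_homology}, \eqref{the:fibrations:limit_of_torsion} and the approximation part of~\eqref{the:fibrations:L2-Betti} are not touched at all, and they do not follow from any $L^2$-acyclicity statement about $\overline{X}$. Your filtration yields (under the extra finiteness hypotheses of part~\eqref{the:fibrations:L2-torsion}) that $b_n^{(2)}(\overline{X};\caln(G))=0$; combined with L\"uck's approximation theorem this gives $\lim_i b_n(G_i\backslash X;\IQ)/[G:G_i]=0$, but not the statement for fields of positive characteristic, and nothing about $\mg(H_n(G_i\backslash\overline{X}))$, $\ln|\tors(H_n(G_i\backslash\overline{X}))|$ or $\rho^{\IZ}(G_i\backslash\overline{X})$. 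The paper's proof of these parts runs through entirely different machinery: the image $A$ of $G_1(F)$ is an infinite normal \emph{abelian} subgroup of $G$, Lemma~\ref{lem:fibrations} shows that $H_n(G_i\backslash\overline{X})$ is a nilpotent $\IZ[A/(A\cap G_i)]$-module of controlled filtration length, and then Proposition~\ref{pro:estimate_for_mg}, the a priori bounds of Lemma~\ref{lem:boundedness_of_sequences}, Lemma~\ref{lem:alpha_n_does_not_contribute} and the comparison $\rho^{\IZ}$ versus $\rho^{(2)}$ of Lemma~\ref{lem:rho(2)-rhoZ} are combined in Proposition~\ref{pro:chain_complex_version} and Theorem~\ref{the:CW-version} to control all these quantities by the corresponding data over the quotient $Q=G/A$, divided by $[A:(A\cap G_i)]\to\infty$. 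None of this is replaced by your argument, so the proposal as it stands proves only one of the five assertions.
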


\begin{remark}\label{rem_no_assumptions}
  We emphasize that we do \emph{not} require in Theorem~\ref{the:fibrations} that
  the fibre transport of the fibration $f \colon X \to B$ is trivial or that
  $F$, $B$ or $G_1(F)$ satisfy any finiteness assumptions except in
  assertion~\eqref{the:fibrations:L2-torsion}. 
\end{remark}

\begin{remark}\label{rem:redundant}
Some of the assertions appearing in Theorem~\ref{the:fibrations} imply one another.
For instance , we have the following facts which are all consequences
of the Universal Coefficients Theorem and 
Lemma~\ref{lem:properties_of_mg}

If we have $\lim_{i\in I} \frac{\mg(H_n(G_i\backslash \overline{X}))}{[G:G_i]} = 0$
for all $n \le d$, then we get for all $n \le d$
\[
\lim_{i\in I} \frac{b_n(G_i\backslash \overline{X};K)}{[G:G_i]} = 0.
\]

If $n \le d$ and we have  both $\lim_{i\in I} \frac{\ln(|\tors(H_n(G_i\backslash \overline{X}))|)}{[G:G_i]} = 0$
and $\lim_{i\in I} \frac{b_n(G_i\backslash \overline{X};\IQ)}{[G:G_i]} = 0$, then 
\[
\lim_{i\in I} \frac{\mg\bigl(H_n(G_i\backslash \overline{X})\bigr)}{[G:G_i]} = 0.
\]
\end{remark}

For the notions of $L^2$-Betti number
  $b_n^{(2)}\bigl(\overline{X};\caln(G)\bigr)$ and $L^2$-torsion
  $\rho^{(2)}\bigl(\overline{X};\caln(G)\bigr)$ we refer for instance
  to~\cite{Lueck(2002)}.

\begin{definition}[Integral torsion]\label{def:integral_torsion}
  Define for a finite $\IZ$-chain complex $D_*$ its \emph{integral torsion}
  \begin{eqnarray*}
    \rho^{\IZ}(D_*) 
    & := & 
    \sum_{n \ge 0} (-1)^n \cdot \ln\left(\bigl|\tors(H_{n}(D_*))\bigr|\right) 
    \quad \in \IR,
  \end{eqnarray*}
  where $\bigl|\tors(H_n(D_*))\bigr|$ is the order of 
  the torsion subgroup of the finitely generated abelian group
  $H_n(D_*)$.

  Given a finite $CW$-complex $X$, define its \emph{integral torsion}
  $\rho^{\IZ}(X)$ by $\rho^{\IZ}(C_*(X))$, where $C_*(X)$ is its cellular
  $\IZ$-complex.
\end{definition}

\begin{remark}[Integral torsion and Milnor's torsion] 
\label{rem:Milnor_torsion}
Let $C_*$ be a finite free $\IZ$-chain complex. Fix for each $n \ge 0$ a
$\IZ$-bases for $C_n$ and for $H_n(C)/\tors(H_n(C))$. They induce $\IQ$-basis
for $\IQ \otimes_{\IZ} C_n$ and 
$H_n\bigl(\IQ \otimes_{\IZ} C_*) 
\cong \IQ \otimes_{\IZ} \bigl(H_n(C)/\tors(H_n(C)\bigr)\bigr)$.
Then the torsion in the sense of Milnor~\cite[page~365]{Milnor(1966)} is $\rho^{\IZ}(C_*)$.
\end{remark}

\begin{definition}[Gottlieb's subgroup of the fundamental group]\label{def:G_1(F)}
  Let $F = (F,x_0)$ be a pointed $CW$-complex. Let 
  $G_1(F,x_0)   \subseteq \pi_1(F,x_0)$ be the subgroup of elements 
  $[w] \in   \pi_1(F,x_0)$ for which there exists a homotopy 
  $h \colon F \times   [0,1] \to F$ 
   such that $h(x,0) = h(x,1) = x$ holds for $x \in F$ and
  the loop at $x_0$ given by $t \mapsto H(x_0,t)$ represents $[w]$.
\end{definition}

\subsection{Connected Lie groups as fibers}
\label{subsec:Connected_Lie_groups_as_fibers}

We give an example, where the conditions about $G_1(F)$ are automatically satisfied.

\begin{corollary}[Connected Lie group as fiber] 
\label{cor:connected_Lie_groups_as_fibers}
Let $L \xrightarrow{j} X \xrightarrow{f} B$ be a fibration with a connected Lie group 
$L$ as fiber and connected $CW$-complexes $B$ as basis and $X$ as total space. Consider a homomorphism
$\phi \colon \pi_1(X) \to G$  such that
the image of $\pi_1(L) \xrightarrow{\pi_1(j)} \pi_1(X) \xrightarrow{\phi} G$ is 
infinite.  Then all assumptions
in Theorem~\ref{the:fibrations} are satisfied   and 
all the assertions~\eqref{the:fibrations:limit_of_mg},~%
\eqref{the:fibrations:limit_of_torsion_in_homology},~%
\eqref{the:fibrations:L2-Betti},~%
\eqref{the:fibrations:limit_of_torsion}
and~\eqref{the:fibrations:L2-torsion} hold.
\end{corollary}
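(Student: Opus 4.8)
The plan is to deduce everything from Theorem~\ref{the:fibrations} by checking its two standing hypotheses: that the image of Gottlieb's subgroup $G_1(F)$ under $\phi \circ \pi_1(j)$ is infinite, and that the relevant finiteness/vanishing conditions needed for assertions~\eqref{the:fibrations:limit_of_torsion} and~\eqref{the:fibrations:L2-torsion} hold. The key observation is that when the fiber is a connected Lie group $L$, we have $G_1(L,e) = \pi_1(L,e)$, i.e.\ Gottlieb's subgroup is the whole fundamental group. This is classical: for a topological group $L$ the required homotopy $h \colon L \times [0,1] \to L$ realizing a given loop $[w] \in \pi_1(L,e)$ can be taken to be $h(x,t) = x \cdot w(t)$, where $w \colon [0,1] \to L$ is a loop at $e$ representing $[w]$; then $h(x,0) = h(x,1) = x$ for all $x \in L$, and $t \mapsto h(e,t) = w(t)$ represents $[w]$. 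Hence $G_1(L) = \pi_1(L)$, so the image of $G_1(L)$ under $\phi \circ \pi_1(j)$ equals the image of $\pi_1(L) \xrightarrow{\pi_1(j)} \pi_1(X) \xrightarrow{\phi} G$, which is infinite by hypothesis. This verifies the main assumption of Theorem~\ref{the:fibrations}, so assertions~\eqref{the:fibrations:limit_of_mg},~\eqref{the:fibrations:limit_of_torsion_in_homology} and~\eqref{the:fibrations:L2-Betti} follow immediately (note that a connected Lie group $L$, being homotopy equivalent to its maximal compact subgroup, a closed manifold, has the homotopy type of a finite $CW$-complex, and $X$ is a finite $CW$-complex by hypothesis, so the skeleton condition with $d = \dim X$ is satisfied).

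For assertion~\eqref{the:fibrations:limit_of_torsion} we additionally need $X$ to be a connected finite $CW$-complex, which is part of the hypothesis, so this follows directly from Theorem~\ref{the:fibrations}\eqref{the:fibrations:limit_of_torsion}.

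For assertion~\eqref{the:fibrations:L2-torsion} we need $F = L$ and $B$ to be connected finite $CW$-complexes and, writing $H$ for the image of $\pi_1(L) \to G$, we need $\rho^{(2)}(\overline{L};\caln(H)) = 0$ for the covering $\overline{L} \to L$ associated to the epimorphism $\pi_1(L) \twoheadrightarrow H$. The finiteness of $B$ is a hypothesis; $L$ has the homotopy type of a finite $CW$-complex as noted above, and $L^2$-torsion is a homotopy invariant (for this we should pass to a finite $CW$-model of $L$, which is harmless). The remaining point, that $\rho^{(2)}(\overline{L};\caln(H)) = 0$, is the one place where real input is needed: a connected Lie group has a Lie group structure, hence (being homotopy equivalent to a closed aspherical-free manifold only in special cases) carries a non-trivial $S^1$-action coming from a one-parameter subgroup whenever $L$ is non-compact or non-trivial — more precisely, one uses that $L$ admits a free $S^1$-action or, by the vanishing results for $L^2$-torsion of spaces with $S^1$-actions (which force $b_n^{(2)} = 0$ and $\rho^{(2)} = 0$), the relevant $L^2$-torsion vanishes; alternatively one invokes that $\rho^{(2)}$ of a connected Lie group with respect to any quotient with infinite image is zero because $L$ is a product involving a factor $\IR^k$ or a torus or a compact group, on each of which the $L^2$-torsion argument applies. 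I expect this last verification — that $\rho^{(2)}(\overline{L};\caln(H)) = 0$ for all connected Lie groups $L$ and all infinite quotients $H$ of $\pi_1(L)$ — to be the main obstacle, and it will be handled by reducing via the structure theory of Lie groups to the two basic cases of a torus (where one computes directly) and a simply connected or semisimple factor (where $\pi_1$ is finite, forcing $H$ finite, contradicting infiniteness unless the torus factor is present), so that in fact the hypothesis ``image infinite'' pins down that the torus factor is non-trivial and contributes the vanishing.
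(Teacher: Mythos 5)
Your verification of the main hypothesis is fine and matches the paper: $G_1(L)=\pi_1(L)$ for a connected Lie group (this is Lemma~\ref{lem:characterizing_G_1(F)}~\eqref{lem:characterizing_G_1(F):example}, and your explicit homotopy $h(x,t)=x\cdot w(t)$ is the standard proof), so assertions~\eqref{the:fibrations:limit_of_mg},~\eqref{the:fibrations:limit_of_torsion_in_homology},~\eqref{the:fibrations:L2-Betti} and~\eqref{the:fibrations:limit_of_torsion} follow directly from Theorem~\ref{the:fibrations}. (Minor misreading: the corollary does not assume $X$ finite; the finiteness hypotheses in assertions~\eqref{the:fibrations:limit_of_torsion} and~\eqref{the:fibrations:L2-torsion} are simply inherited from the theorem.)

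The genuine gap is in assertion~\eqref{the:fibrations:L2-torsion}: you correctly identify that one must \emph{prove} the vanishing $\rho^{(2)}(\overline{L};\caln(H))=0$ (it is a hypothesis of the theorem, not of the corollary), but your argument for it is only a sketch and the reduction you propose is wrong as stated. The dichotomy ``torus factor versus semisimple factor with finite $\pi_1$'' fails: semisimple Lie groups can have infinite fundamental group (e.g.\ $SL_2(\IR)$ with $\pi_1\cong \IZ$), so the hypothesis that the image $H$ is infinite does not force a non-trivial torus direct factor of $L$; and the appeal to vanishing results for spaces with $S^1$-actions is not substantiated (in the paper the $S^1$-result is deduced \emph{from} this corollary, not the other way around). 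The paper closes this gap by a bootstrap: choose a maximal torus $T\subseteq L$, note $G_1(T)=\pi_1(T)$ and that $\pi_1(T)\to\pi_1(L)$ is surjective, and apply Theorem~\ref{the:fibrations}~\eqref{the:fibrations:L2-torsion} a second time to the fibration $T\to L\to L/T$; this reduces the claim to $\rho^{(2)}(\overline{T};\caln(H))=0$ for the torus, where one writes the infinite finitely generated abelian group $H$ as $\IZ\times A$, identifies $\overline{T}$ with $\widetilde{S^1}\times Y$ for a finite free $A$-$CW$-complex $Y$, and uses the product formula together with $\rho^{(2)}(\widetilde{S^1};\caln(\IZ))=0$. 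Some such concrete reduction (or an equivalent complete argument) is missing from your proposal.
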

\begin{proof}
Lemma~\ref{lem:characterizing_G_1(F)}~\eqref{lem:characterizing_G_1(F):example}
implies $G_1(L) = \pi_1(L)$. Because of Theorem~\ref{the:fibrations}
it remains to show
$\rho^{(2)}(\overline{L};\caln(H)) = 0$, where $H$ is the image of
$\pi_1(L) \xrightarrow{\pi_1(j)} \pi_1(X) \xrightarrow{\phi} G$ and
$\overline{L} \to L$ is the covering associated to the induced epimorphism
$\pi_1(L)  \to H$.

Let $T \subseteq L$ be a maximal
torus. Then we have the fibration $T \xrightarrow{i} L \xrightarrow{p} L/T$. Again by
Lemma~\ref{lem:characterizing_G_1(F)}~\eqref{lem:characterizing_G_1(F):example}
we have $G_1(T) = \pi_1(T)$. Hence by 
Theorem~\ref{the:fibrations}~\eqref{the:fibrations:L2-torsion}  it suffices to show
$\rho^{(2)}(\overline{T};\caln(H)) = 0$, where $\overline{T} \to T$
is the covering associated to the epimorphism 
$\pi_1(T) \xrightarrow{\pi_1(i)} \pi_1(L)  \xrightarrow{\pi_1(j)}  \pi_1(X) \xrightarrow{\phi} H$. 
(This is an epimorphism as  $\pi_1(T) \to \pi_1(L)$ is surjective.). 
Since $H$ is by assumption infinite,
we can write $H$ as a product of subgroups $\IZ \times A$ and
$\overline{T}$ as the product of the $\IZ$-space $\widetilde{S^1}$ and
of a finite free $A$-$CW$-complex $Y$. Then we conclude 
from~\cite[Theorem~3.93 on page~161]{Lueck(2002)}
\begin{eqnarray*}
\rho^{(2)}(\overline{T};\caln(H))
& = & 
\rho^{(2)}\bigl(\widetilde{S^1}\times Y;\caln(\IZ \times A)\bigr)
\\
& = & 
\rho^{(2)}\bigl(\widetilde{S^1};\caln(\IZ)\bigr) \cdot \chi(A \backslash Y)
\\
& = & 0,
\end{eqnarray*}
since $\rho^{(2)}\bigl(\widetilde{S^1};\caln(\IZ)\bigr)$ vanishes
by~\cite[(3.24) on page~136]{Lueck(2002)}.
\end{proof}

\begin{example} \label{exa:two-connected_base}
Let $L \xrightarrow{j} X \xrightarrow{f} B$ be a fibration with a connected Lie group 
$L$ as fiber and connected $CW$-complexes $B$ as basis and $X$ as total space. Suppose that $\pi_2(B)$ vanishes
and that $\pi_1(L)$ is infinite.
Consider the case $G = \pi_1(X)$, $\phi = \id_G$ and $\overline{X} = \widetilde{X}$.
Then all assumptions
in Theorem~\ref{the:fibrations} are satisfied   and 
all the assertions~\eqref{the:fibrations:limit_of_mg},~%
\eqref{the:fibrations:limit_of_torsion_in_homology},~%
\eqref{the:fibrations:L2-Betti},~%
\eqref{the:fibrations:limit_of_torsion}
and~\eqref{the:fibrations:L2-torsion} hold.
This follows from Corollary~\ref{cor:connected_Lie_groups_as_fibers} and the long exact homotopy
associated to the fibration above.
\end{example}

\subsection{$S^1$-actions}
\label{subsec:S1-action}

\begin{corollary}[$S^1$-action] \label{cor:S1_action}
Let $X$ be a connected finite $S^1$-$CW$-complex. Suppose that for one
(and hence all) base points $x \in X$ the composite
$\pi_1(S^1) \xrightarrow{\pi_1(\ev_x)} \pi_1(X) \xrightarrow{\phi} G$
has infinite image, where the evaluation map 
$\ev_x \colon S^1 \to X$ sends $z$ to $zx$.
(This implies that the $S^1$-action has no fixed points.) Then
all the assertions~\eqref{the:fibrations:limit_of_mg},~%
\eqref{the:fibrations:limit_of_torsion_in_homology},~%
\eqref{the:fibrations:L2-Betti},~%
\eqref{the:fibrations:limit_of_torsion}
and~\eqref{the:fibrations:L2-torsion} appearing in
Theorem~\ref{the:fibrations} hold. 
\end{corollary}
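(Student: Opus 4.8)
The plan is to reduce Corollary~\ref{cor:S1_action} to Theorem~\ref{the:fibrations} by turning the $S^1$-action into an honest fibration with fibre $S^1$ via the Borel construction, together with one separate classical input for the $L^2$-torsion assertion. Fix a contractible free $S^1$-$CW$-complex $ES^1$ with finite skeleta (e.g.\ $S^\infty$) and equip $ES^1\times X$ with the diagonal $S^1$-action; it is free, the projection $\pr\colon ES^1\times X\to X$ is $S^1$-equivariant and a non-equivariant homotopy equivalence, and the orbit map $ES^1\times X\to ES^1\times_{S^1}X=:B$ is a principal $S^1$-bundle, hence a fibration $S^1\xrightarrow{j}ES^1\times X\xrightarrow{f}B$ of connected $CW$-complexes with $ES^1\times X$ and $B$ having finite skeleta. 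Restricting the bundle to the orbit of a point $(e_0,x_0)$ identifies $\pr\circ j$ with the evaluation map $\ev_{x_0}$, so that under $\pi_1(\pr)\colon\pi_1(ES^1\times X)\xrightarrow{\cong}\pi_1(X)$ the homomorphism $\overline{\phi}:=\phi\circ\pi_1(\pr)$ satisfies $\overline{\phi}\circ\pi_1(j)=\phi\circ\pi_1(\ev_{x_0})$, which has infinite image by hypothesis. Since $G_1(S^1)=\pi_1(S^1)$ by Lemma~\ref{lem:characterizing_G_1(F)}~\eqref{lem:characterizing_G_1(F):example}, all hypotheses of Theorem~\ref{the:fibrations} hold for this fibration and for every $d$.

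Next I would transport the conclusions back to $X$. The $G$-covering of $ES^1\times X$ associated to $\overline{\phi}$ is $\pr^{\ast}\overline{X}=ES^1\times\overline{X}$ with $G$ acting only on the second factor, so $G_i\backslash(ES^1\times\overline{X})=ES^1\times(G_i\backslash\overline{X})$ is homotopy equivalent to $G_i\backslash\overline{X}$ for every $i$; hence the homology groups and $L^2$-Betti numbers of the two families agree. Applying Theorem~\ref{the:fibrations}~\eqref{the:fibrations:limit_of_mg},~\eqref{the:fibrations:limit_of_torsion_in_homology},~\eqref{the:fibrations:L2-Betti} (valid for all $n$, since all skeleta are finite) to the Borel fibration therefore gives assertions~\eqref{the:fibrations:limit_of_mg},~\eqref{the:fibrations:limit_of_torsion_in_homology} and~\eqref{the:fibrations:L2-Betti} for $X$. (In the special case of a free action one may instead feed the genuine fibration $S^1\to X\to X/S^1$ into Corollary~\ref{cor:connected_Lie_groups_as_fibers}.)

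For assertion~\eqref{the:fibrations:limit_of_torsion} I would argue directly from what has already been shown: as $X$ is finite, $H_n(G_i\backslash\overline{X})=0$ for $n>\dim X$, so $\rho^{\IZ}(G_i\backslash\overline{X})=\sum_{n}(-1)^n\ln\bigl(|\tors(H_n(G_i\backslash\overline{X}))|\bigr)$ is a finite sum; dividing by $[G:G_i]$ and using assertion~\eqref{the:fibrations:limit_of_torsion_in_homology} term by term yields limit $0$. For the $L^2$-torsion over the trivial group one uses that for a finite $CW$-complex $Z$ the number $\rho^{(2)}(Z;\caln(\{1\}))$ is determined, up to sign, by the integral torsion $\rho^{\IZ}$ of its cellular $\IZ$-chain complex (this is the identity underlying the equality asserted in~\eqref{the:fibrations:limit_of_torsion}; compare Remark~\ref{rem:Milnor_torsion}), so its normalized limit vanishes as well.

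The remaining and genuinely harder point is assertion~\eqref{the:fibrations:L2-torsion}, namely $\rho^{(2)}(\overline{X};\caln(G))=0$; this is where the Borel construction is useless. Indeed every finite free $S^1$-$CW$-complex $P$ has $\chi(P)=0$, so the product formula for $L^2$-torsion forces $\rho^{(2)}(P\times\overline{X};\caln(G))=\chi(P)\cdot\rho^{(2)}(\overline{X};\caln(G))=0$ independently of $\rho^{(2)}(\overline{X};\caln(G))$, while the infinite model $B$ is not finite. If the $S^1$-action is free, $X\to X/S^1$ is a principal $S^1$-bundle over a finite base and Theorem~\ref{the:fibrations}~\eqref{the:fibrations:L2-torsion} applies at once, with $H\cong\IZ$ the (infinite) image of $\phi\circ\pi_1(\ev_x)$ and $\rho^{(2)}(\overline{S^1};\caln(\IZ))=\rho^{(2)}(\widetilde{S^1};\caln(\IZ))=0$ by~\cite[(3.24) on page~136]{Lueck(2002)}. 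For a general fixed-point-free (but possibly non-free) action the quotient map $X\to X/S^1$ is of Seifert type and \emph{not} a fibration, and no finite fibration with fibre $S^1$ whose total space is homotopy equivalent to $X$ exists in general (e.g.\ for an aspherical Seifert manifold with exceptional orbits); here I would invoke the classical vanishing of the $L^2$-torsion of $G$-coverings of fixed-point-free finite $S^1$-$CW$-complexes satisfying the present infinite-image condition (see~\cite{Lueck(2002)}), which is proved by induction over the $S^1$-cells $S^1/C\times D^k$ ($C$ finite cyclic) using the sum formula for $L^2$-torsion and the vanishing on a single orbit cell. Thus the one real obstacle is isolating assertion~\eqref{the:fibrations:L2-torsion} for non-free actions; all the other assertions follow formally from Theorem~\ref{the:fibrations}.
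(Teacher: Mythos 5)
Your Borel-construction reduction is a nice alternative to the paper's route for assertions~\eqref{the:fibrations:limit_of_mg}--\eqref{the:fibrations:L2-Betti} (the paper instead treats the free case via $S^1\to X\to S^1\backslash X$ and, for non-free actions, modifies Lemma~\ref{lem:fibrations} by induction over the $S^1$-skeleta of $X$), and your treatment of assertion~\eqref{the:fibrations:L2-torsion} is at roughly the paper's own level of detail, except that the result you quote from~\cite{Lueck(2002)} is stated there for the universal covering, so you should say explicitly that your cell-by-cell induction is what extends it to the covering associated to $\phi$. The genuine gap is in assertion~\eqref{the:fibrations:limit_of_torsion}, in the half concerning $\lim_{i}\rho^{(2)}\bigl(G_i\backslash\overline{X};\caln(\{1\})\bigr)/[G:G_i]$. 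Your claim that for a finite $CW$-complex $Z$ the number $\rho^{(2)}(Z;\caln(\{1\}))$ is determined up to sign by $\rho^{\IZ}(Z)$ is false: by Lemma~\ref{lem:rho(2)-rhoZ} the two quantities differ by the regulator term $\sum_n(-1)^n\ln\bigl({\det}_{\caln(\{1\})}(\alpha_n)\bigr)$, which compares the lattice $H_n(Z)_f$ with the harmonic $L^2$-structure on homology and is nonzero in general; Remark~\ref{rem:Milnor_torsion} concerns a torsion defined after \emph{choosing} integral bases of $H_n/\tors(H_n)$, which is not the Hilbert structure entering $\rho^{(2)}(\,\cdot\,;\caln(\{1\}))$. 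So your argument does prove $\lim_i\rho^{\IZ}(G_i\backslash\overline{X})/[G:G_i]=0$ (a finite sum of terms controlled by assertion~\eqref{the:fibrations:limit_of_torsion_in_homology}), but it says nothing about the $\rho^{(2)}$-limit. Note also that you cannot get this from Theorem~\ref{the:fibrations}~\eqref{the:fibrations:limit_of_torsion} applied to the Borel fibration, since its total space is not finite, and $\rho^{(2)}(\,\cdot\,;\caln(\{1\}))$ is not a homotopy invariant, so it does not transfer along $ES^1\times(G_i\backslash\overline{X})\simeq G_i\backslash\overline{X}$.

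Controlling exactly this discrepancy, i.e.\ proving $\lim_i\frac{1}{[G:G_i]}\sum_n(-1)^n\ln\bigl({\det}_{\caln(\{1\})}(\alpha[i]_n)\bigr)=0$, is one of the main technical points of the paper (Lemma~\ref{lem:boundedness_of_sequences}~\eqref{lem:boundedness_of_sequences:alpha}, Lemma~\ref{lem:alpha_n_does_not_contribute}, Proposition~\ref{pro:chain_complex_version}~\eqref{pro:chain_complex_version:alpha} and~\eqref{pro:chain_complex_version:rho}); it requires an infinite normal subgroup of $G$ acting trivially on $\IQ\otimes_{\IZ}H_n(G_i\backslash\overline{X})$ and cannot be replaced by a pointwise identity between $\rho^{(2)}$ and $\rho^{\IZ}$. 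The gap is repairable inside your framework: your Borel fibration, restricted over the skeleta of the base and fed into Lemma~\ref{lem:fibrations}, shows that $H_n(G_i\backslash\overline{X})$ is a nilpotent $\IZ[A/(A\cap G_i)]$-module for $A$ the image of $\phi\circ\pi_1(\ev_x)$ (the homotopy equivalence is compatible with the residual $G/G_i$-action); then apply Theorem~\ref{the:CW-version} to the finite complex $X$ itself to obtain assertion~\eqref{the:fibrations:limit_of_torsion}. This is essentially the induction over $S^1$-skeleta that the paper alludes to.
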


Corollary~\ref{cor:S1_action} follows directly
from Corollary~\ref{cor:connected_Lie_groups_as_fibers} provided that the action is free
and hence we obtain a fibration $S^1 \xrightarrow{\ev_x} X \to S^1\backslash X$
over the finite $CW$-complex $S^1\backslash X$. We omit the proof
of the general case, where one only have to modify 
Lemma~\ref{lem:fibrations} a little bit, namely, one uses induction
over the skeletons of the $S^1$-$CW$-complex $X$. 

\begin{corollary}[$S^1$-action on an aspherical closed  manifold]
\label{cor:S1_action_on_a_aspherical_manifold}
Let $M$ be an aspherical closed manifold with an $S^1$-action
which is non-trivial, i.e., there exists $x \in M$ and $z \in S^1$ with $zx \not= x$.
Now take $G = \pi_1(X)$ and $\phi = \id_{\pi_1(X)}$.

Then all the assertions~\eqref{the:fibrations:limit_of_mg},~%
\eqref{the:fibrations:limit_of_torsion_in_homology},~%
\eqref{the:fibrations:L2-Betti},~%
\eqref{the:fibrations:limit_of_torsion}
and~\eqref{the:fibrations:L2-torsion} appearing in
Theorem~\ref{the:fibrations} hold for this choice of $\phi$.
\end{corollary}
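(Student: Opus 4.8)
The plan is to derive Corollary~\ref{cor:S1_action_on_a_aspherical_manifold} from Corollary~\ref{cor:S1_action}, applied to $X = M$, $G = \pi_1(M)$ and $\phi = \id_{\pi_1(M)}$. Two points have to be checked. First, $M$ must be a connected finite $S^1$-$CW$-complex: connectedness is part of the hypothesis, and a finite $S^1$-$CW$-structure is provided by equivariant triangulation (Illman) when the action is smooth --- this is the only role of the manifold structure beyond asphericity. Second, and substantially, one has to verify the hypothesis of Corollary~\ref{cor:S1_action}: that for some $x \in M$ the orbit map $\ev_x \colon S^1 \to M$, $z \mapsto zx$, has infinite image in $\pi_1(M)$. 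Granting this, all of the assertions~\eqref{the:fibrations:limit_of_mg}--\eqref{the:fibrations:L2-torsion} follow immediately; note that the ``for one (and hence all)'' clause in Corollary~\ref{cor:S1_action} is automatic, since the orbit maps at two points of the path-connected space $M$ are freely homotopic and hence have conjugate images on $\pi_1$.

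To verify the hypothesis I would first reduce to an effective action. The ineffective kernel of the $S^1$-action is a closed subgroup of $S^1$, proper since the action is non-trivial, hence a finite cyclic group $C$. The group $S^1/C \cong S^1$ then acts effectively and still non-trivially on $M$; this does not change the orbits as subsets of $M$, and it changes each orbit map only by precomposition with the $|C|$-fold covering $S^1 \to S^1/C$, which on $\pi_1$ is multiplication by $|C|$ on $\IZ$ and hence has infinite image. So we may assume the $S^1$-action on $M$ effective.

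Since $M$ is a closed aspherical manifold, $\pi_1(M)$ is torsion-free; hence the image of $\pi_1(\ev_x)$ is either trivial or infinite cyclic, and it suffices to produce one $x$ for which it is non-trivial --- equivalently, one point whose orbit circle $S^1 x$ is homotopically essential in $M$. This is precisely the content of the rigidity theorem of Conner and Raymond on compact transformation groups of closed aspherical manifolds: a non-trivial effective $S^1$-action on a closed aspherical manifold is \emph{injective}, meaning that it is fixed-point free and that the inclusion of a principal orbit induces an injection $\IZ = \pi_1(S^1) \hookrightarrow \pi_1(M)$ onto an infinite (in fact central) subgroup. Consequently every principal point $x$ works, and I would simply invoke this result rather than reprove it. This is the step I expect to be the main obstacle if one insists on self-containedness: the conclusion is false without asphericity --- for the rotation action of $S^1$ on $S^2$ every orbit map is null-homotopic --- so asphericity has to be used essentially, and the Conner--Raymond argument does so by passing to the contractible finite-dimensional universal cover $\widetilde{M}$, lifting the action to an action of a connected covering group of $S^1$ on $\widetilde{M}$, and excluding both torsion in $\pi_1(M)$ and a fixed point of the induced action on $\widetilde{M}$.
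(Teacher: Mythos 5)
Your proposal is correct and follows essentially the same route as the paper: the paper's proof likewise just observes that for a non-trivial $S^1$-action on an aspherical closed manifold the orbit map $\ev_x$ is injective on fundamental groups (the Conner--Raymond rigidity result, cited there as~\cite[Theorem~1.43 on page~48]{Lueck(2002)}) and then applies Corollary~\ref{cor:S1_action}. Your additional remarks (reduction to an effective action, torsion-freeness of $\pi_1(M)$, the finite $S^1$-$CW$-structure) are fine elaborations of details the paper leaves implicit.
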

\begin{proof}
The $\ev_x \colon S^1 \to X$ induces an injection on the fundamental
groups~\cite[Theorem~1.43 on page~48]{Lueck(2002)}.
Now apply Corollary~\ref{cor:S1_action}.
\end{proof}

\subsection{Approximation Conjectures}
\label{subsec:Approximation_Conjectures}

The following two conjectures are motivated 
by~\cite[Conjecture~1.3]{Bergeron-Venkatesh(2010)}
and~\cite[Conjecture~11.3 on page~418 and Question~13.52 on page~478]{Lueck(2002)}. 
We will prove them in a special case
in Corollary~\ref{cor:Groups_containing_a_normal_infinite_nice_subgroups}.

\begin{conjecture}[Approximation Conjecture for $L^2$-torsion]
\label{con:Approximation_Conjecture_for_Fulgede-Kadison_determinants_and_L2-torsion}
Let $X$ be a finite connected $CW$-complex and let $\overline{X} \to X$ be a $G$-covering. 

\begin{enumerate}

\item \label{con:Approximation_Conjecture_for_Fulgede-Kadison_determinants_and_L2-torsion:torsion}
If the $G$-$CW$-structure on $\overline{X}$ and for each $i \in I$  the $CW$-structure on 
$G_i\backslash \overline{X}$  come from a given $CW$-structure on $X$, then
\[
\rho^{(2)}(\overline{X};\caln(G)) = \lim_{i \to \infty} \frac{\rho^{(2)}(G_i\backslash \overline{X};\caln(\{1\}))}{[G:G_i]};
\]

\item \label{con:Approximation_Conjecture_for_Fulgede-Kadison_determinants_and_L2-torsion:analytic}
If $X$ is a closed Riemannian manifold and we equip $G_i\backslash \overline{X}$ and $\overline{X}$
with the induced Riemannian metrics, one can replace the torsion in the equality appearing 
in~\eqref{con:Approximation_Conjecture_for_Fulgede-Kadison_determinants_and_L2-torsion:torsion}
by the analytic versions;

\item \label{con:Approximation_Conjecture_for_Fulgede-Kadison_determinants_and_L2-torsion:homology}
If $b_n^{(2)}(\overline{X};\caln(G))$ vanishes for all $n \ge 0$, then 
\[
\rho^{(2)}(\overline{X};\caln(G)) = \lim_{i \to \infty} \frac{\rho^{\IZ}(G_i\backslash \overline{X})}{[G:G_i]}.
\]
\end{enumerate}
\end{conjecture}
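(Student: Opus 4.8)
The plan is to reduce all three assertions to one statement about Fuglede--Kadison determinants and then to combine L\"uck's approximation theorem for spectral density functions with the Determinant Conjecture. Begin with the cellular $\IZ G$-chain complex $C_*(\overline{X})$ and its boundary maps $c_p$. For $\rho^{(2)}(\overline{X};\caln(G))$ even to be defined one must assume, as we do throughout, that $\overline{X}$ is $L^2$-acyclic and of determinant class; then (see~\cite{Lueck(2002)}) $\rho^{(2)}(\overline{X};\caln(G))$ is an explicit signed combination of the numbers $\ln\det_{\caln(G)}(c_p)$. Restriction along $G \to G/G_i$ carries $c_p$ to the boundary map of $C_*(G_i\backslash\overline{X})$, and in the normalised trace $\ln\det_{\caln(\{1\})}$ of the latter is $[G:G_i]^{-1}$ times the logarithm of an ordinary positive real determinant. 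Hence assertion~\eqref{con:Approximation_Conjecture_for_Fulgede-Kadison_determinants_and_L2-torsion:torsion} follows once one shows, for every matrix $A$ over $\IZ G$, that $\frac{1}{[G:G_i]}\ln\det_{\caln(\{1\})}\bigl(\res_{G_i} A\bigr) \to \ln\det_{\caln(G)}(A)$; this is the Approximation Conjecture for Fuglede--Kadison determinants for the tower $\{G_i\}$.

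I would then dispose of the one easy inequality. Put $\Delta = A^*A$ over $\caln(G)$, let $\Delta_i$ be its restriction to $\IZ[G/G_i]$, and let $F$, $F_i$ be the spectral density functions, $F_i$ normalised by $[G:G_i]^{-1}$. By the approximation theorem for spectral density functions~\cite{Lueck(1994c)} one has $F_i(\lambda)\to F(\lambda)$ at every continuity point $\lambda$ of $F$, and $F_i(K)=F(K)$ for $K\ge\|\Delta\|$ since the quasi-regular representations are weakly contained in the regular one. Writing $\ln\det_{\caln(G)}(\Delta)=\int_{0^+}^K\ln\lambda\,dF(\lambda)$ and likewise for each $F_i$, splitting at a continuity point $\epsilon<1$, using $\int_{0^+}^{\epsilon}\ln\lambda\,dF_i\le 0$, and applying integration by parts together with dominated convergence on $[\epsilon,K]$, one gets $\limsup_i\int_{0^+}^K\ln\lambda\,dF_i\le\int_\epsilon^K\ln\lambda\,dF$, and letting $\epsilon\downarrow 0$ (the limit integral being finite by the standing determinant-class hypothesis) yields the unconditional bound $\limsup_i\frac{1}{[G:G_i]}\ln\det_{\caln(\{1\})}(\res_{G_i}\Delta)\le\ln\det_{\caln(G)}(\Delta)$. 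Since $\Delta_i$ is an integral positive semidefinite matrix, the product of its nonzero eigenvalues is a positive integer, so $\frac{1}{[G:G_i]}\ln\det_{\caln(\{1\})}(\res_{G_i}\Delta)\ge 0$; this closes the argument whenever $\ln\det_{\caln(G)}(\Delta)=0$, which covers the $L^2$-acyclic, determinant-class situations arising from Theorem~\ref{the:fibrations} and Corollary~\ref{cor:Groups_containing_a_normal_infinite_nice_subgroups}, where both sides of~\eqref{con:Approximation_Conjecture_for_Fulgede-Kadison_determinants_and_L2-torsion:torsion} vanish.

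The matching lower bound $\liminf_i\frac{1}{[G:G_i]}\ln\det(\Delta_i)\ge\ln\det_{\caln(G)}(\Delta)$ when the right side is positive is the crux, and it is the step I expect to resist. What is needed is that no $\ln$-mass of the $F_i$ escapes to $\lambda=0$, precisely $\lim_{\epsilon\downarrow 0}\limsup_i\int_{0^+}^{\epsilon}|\ln\lambda|\,dF_i(\lambda)=0$, a uniform integrability of $\ln$ against the net $(F_i)$; this same condition also delivers the asserted existence of the limit. It would follow from a uniform Novikov--Shubin estimate $F_i(\lambda)-F_i(0)\le C\lambda^{\alpha}$ with $C,\alpha>0$ independent of $i$, or from a uniform ``determinant class'' bound along the tower, but such uniformity is not known for arbitrary $\{G_i\}$ and sits near the Determinant Conjecture and Lehmer-type problems; this is exactly why the statement is posed as a conjecture. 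I would therefore isolate this uniform-integrability hypothesis, prove~\eqref{con:Approximation_Conjecture_for_Fulgede-Kadison_determinants_and_L2-torsion:torsion} conditionally on it, and verify it unconditionally in the two accessible regimes: when the complex is forced to be $L^2$-acyclic and of determinant class for structural reasons, and when $G$ is residually amenable or sofic, so that the Determinant Conjecture is available over $\caln(G)$ and the determinant convergence can be routed through an amenable-exhaustion argument.

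Finally, assertions~\eqref{con:Approximation_Conjecture_for_Fulgede-Kadison_determinants_and_L2-torsion:analytic} and~\eqref{con:Approximation_Conjecture_for_Fulgede-Kadison_determinants_and_L2-torsion:homology} are to be reduced to~\eqref{con:Approximation_Conjecture_for_Fulgede-Kadison_determinants_and_L2-torsion:torsion}. For~\eqref{con:Approximation_Conjecture_for_Fulgede-Kadison_determinants_and_L2-torsion:analytic} one invokes the equality of Ray--Singer analytic torsion with Reidemeister torsion (Cheeger--M\"uller) at each finite level, together with its $L^2$-analogue (Burghelea--Friedlander--Kappeler--McDonald, L\"uck--Schick) for $\overline{X}$, so that the claim becomes the approximation of analytic torsion; there the obstacle migrates to uniform small-time heat-kernel estimates along the tower of Riemannian coverings $G_i\backslash\overline{X}$. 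For~\eqref{con:Approximation_Conjecture_for_Fulgede-Kadison_determinants_and_L2-torsion:homology} the Universal Coefficient Theorem together with Remark~\ref{rem:Milnor_torsion} identifies $\rho^{\IZ}(G_i\backslash\overline{X})$ with a Reidemeister-type torsion taken in integral bases, which differs from the $\caln(\{1\})$-torsion by the logarithms of the covolumes (``regulators'') of $H_p(G_i\backslash\overline{X};\IR)$ with respect to integral bases; the hypothesis $b_p^{(2)}(\overline{X};\caln(G))=0$ for all $p$ is the natural input forcing these regulator terms to be $o([G:G_i])$, and granting that,~\eqref{con:Approximation_Conjecture_for_Fulgede-Kadison_determinants_and_L2-torsion:homology} follows from~\eqref{con:Approximation_Conjecture_for_Fulgede-Kadison_determinants_and_L2-torsion:torsion}. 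In all three parts the genuine difficulty is the same: uniform control of the low spectrum of the combinatorial or Hodge Laplacians along the tower of finite covers.
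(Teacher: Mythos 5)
You are reviewing a statement that the paper itself only \emph{conjectures}: there is no general proof in the paper, and the conjecture is verified only in the special situations of Corollary~\ref{cor:Groups_containing_a_normal_infinite_nice_subgroups}, where both sides of the asserted equalities vanish. Measured against that, much of your strategy is sound and honestly presented: the reduction of assertion~\eqref{con:Approximation_Conjecture_for_Fulgede-Kadison_determinants_and_L2-torsion:torsion} to an approximation statement for Fuglede--Kadison determinants, the unconditional inequality $\limsup_i [G:G_i]^{-1}\ln{\det}_{\caln(\{1\})}(\res_{G_i}\Delta)\le \ln{\det}_{\caln(G)}(\Delta)$, the nonnegativity of the finite-level terms by integrality, and your identification of the missing lower bound (no escape of $\ln$-mass at $\lambda=0$; the known uniform bound of order $1/|\ln\lambda|$ is not integrable against $d\lambda/\lambda$) as the open core are all correct and explain precisely why the statement is posed as a conjecture. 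Two smaller caveats: your blanket hypothesis that $\overline{X}$ be $L^2$-acyclic narrows part~\eqref{con:Approximation_Conjecture_for_Fulgede-Kadison_determinants_and_L2-torsion:torsion} (determinant class suffices for $\rho^{(2)}$ to be defined), and in part~\eqref{con:Approximation_Conjecture_for_Fulgede-Kadison_determinants_and_L2-torsion:homology} the claim that $b_n^{(2)}=0$ forces the regulator terms to be $o([G:G_i])$ is itself unproven in general.

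The genuine gap is your assertion that the easy case $\ln{\det}_{\caln(G)}(\Delta)=0$ ``covers'' the situations arising from Theorem~\ref{the:fibrations} and Corollary~\ref{cor:Groups_containing_a_normal_infinite_nice_subgroups}. In those situations only the alternating sum $\rho^{(2)}(\overline{X};\caln(G))$ vanishes; the Fuglede--Kadison determinants of the individual differentials or Laplacians need not vanish, and Remark~\ref{rem_no_claim_for_differentials} records exactly this failure: for $S^1\times Y$ the normalized finite-level log-determinants of the differentials do not tend to zero, so termwise determinant convergence to zero is not the mechanism, and your argument does not recover the paper's evidence for the conjecture. The paper's route is different: $\lim_i \rho^{\IZ}(G_i\backslash\overline{X})/[G:G_i]=0$ is obtained from the nilpotent-homology estimates (Proposition~\ref{pro:estimate_for_mg}, Proposition~\ref{pro:chain_complex_version}, Theorem~\ref{the:CW-version}), the comparison of $\rho^{\IZ}$ with $\rho^{(2)}(\cdot;\caln(\{1\}))$ is made through the regulator isomorphisms $\alpha_n$ via Lemma~\ref{lem:rho(2)-rhoZ}, and those regulator contributions are shown to be $o([G:G_i])$ in Lemma~\ref{lem:alpha_n_does_not_contribute}, using the infinite normal subgroup and the a priori bounds of Lemma~\ref{lem:boundedness_of_sequences}; the vanishing of $\rho^{(2)}(\overline{X};\caln(G))$ is proved separately (Theorem~\ref{the:fibrations}~\eqref{the:fibrations:L2-torsion}, respectively Wegner and Li--Thom). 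If you want your write-up to match the paper's actual evidence, you must replace the termwise determinant argument in the special cases by this comparison of integral and $L^2$-torsion with control of the regulators.
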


\begin{conjecture}[Homological growth and $L^2$-torsion for aspherical closed manifolds]
\label{con:Homological_growth_and_L2-torsion_for_aspherical_manifolds}
Let $M$ be an aspherical closed manifold of dimension $d$
and fundamental group $G = \pi_1(M)$. Then

\begin{enumerate}

\item \label{con:Homological_growth_and_L2-torsion_for_aspherical_manifolds:Betti}
For any natural number $n$ with $2n \not= d$ we have 
\[
b_n^{(2)}(\widetilde{M};\caln(G))  
=  
\lim_{i \to \infty} \frac{b_n(G_i\backslash \widetilde{M};\IQ)}{[G:G_i]} 
 =   0.
\]
If $d = 2n$ is even, we get
\[
b_n^{(2)}(\widetilde{M};\caln(G))  
=  
\lim_{i \to \infty} \frac{b_n(G_i\backslash \widetilde{M};\IQ)}{[G:G_i]} 
= 
(-1)^n \cdot \chi(M) \ge 0;
\]

\item \label{con:Homological_growth_and_L2-torsion_for_aspherical_manifolds:tors}
 For any natural number $n$ with $2n +1 \not= d$ we have 
\[
\lim_{i \in I} \;\frac{\ln\big(\bigl|\tors\bigl(H_n(G_i\backslash \widetilde{M})\bigr)\bigr|\bigr)}{[G:G_i]}
 =  0.
\]
If $d = 2n+1$, we have
\[
\lim_{i \in I} \;\frac{\ln\big(\bigl|\tors\bigl(H_n(G_i\backslash \widetilde{M})\bigr)\bigr|\bigr)}{[G:G_i]}  
=  
(-1)^n \cdot \rho^{(2)}\bigl(\widetilde{M};\caln(G)\bigr) 
\ge 0.
\]
\end{enumerate}
\end{conjecture}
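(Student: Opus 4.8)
The plan is to deduce Conjecture~\ref{con:Homological_growth_and_L2-torsion_for_aspherical_manifolds} from three ingredients: Lück's Approximation Theorem for $L^2$-Betti numbers, Poincaré duality together with the Euler--Poincaré formula for $L^2$-invariants, and --- for the parts that cannot be made unconditional --- the Singer Conjecture and Conjecture~\ref{con:Approximation_Conjecture_for_Fulgede-Kadison_determinants_and_L2-torsion}; in the special situations highlighted in the introduction I would instead invoke Theorem~\ref{the:fibrations}. For assertion~\eqref{con:Homological_growth_and_L2-torsion_for_aspherical_manifolds:Betti}, first apply the Approximation Theorem (see \cite{Lueck(2002)}) to the universal covering $\widetilde M \to M$: since $[G:G_i]$ is finite and $\bigcap_{i} G_i = \{1\}$, it identifies $\lim_{i} b_n(G_i \backslash \widetilde M;\IQ)/[G:G_i]$ with $b_n^{(2)}(\widetilde M;\caln(G))$. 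As $M$ is aspherical and closed, the Singer Conjecture predicts $b_n^{(2)}(\widetilde M;\caln(G)) = 0$ whenever $2n \neq d$, which is the first case. For $d = 2n$ one combines Poincaré duality $b_k^{(2)}(\widetilde M;\caln(G)) = b_{d-k}^{(2)}(\widetilde M;\caln(G))$ and the Euler--Poincaré formula $\chi(M) = \sum_{k \ge 0} (-1)^k b_k^{(2)}(\widetilde M;\caln(G))$ with the vanishing just quoted: only $k = n$ survives, so $\chi(M) = (-1)^n b_n^{(2)}(\widetilde M;\caln(G))$, whence $b_n^{(2)}(\widetilde M;\caln(G)) = (-1)^n \chi(M)$, and non-negativity of $L^2$-Betti numbers forces $(-1)^n\chi(M) \ge 0$.

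For assertion~\eqref{con:Homological_growth_and_L2-torsion_for_aspherical_manifolds:tors} I would treat the odd-dimensional case $d = 2m+1$ first. There the Singer Conjecture makes $\widetilde M$ $L^2$-acyclic, and since $G$ is residually finite --- hence sofic --- it satisfies the Determinant Conjecture, so $\rho^{(2)}(\widetilde M;\caln(G))$ is defined. Conjecture~\ref{con:Approximation_Conjecture_for_Fulgede-Kadison_determinants_and_L2-torsion}~\eqref{con:Approximation_Conjecture_for_Fulgede-Kadison_determinants_and_L2-torsion:homology} then gives $\lim_{i} \rho^{\IZ}(G_i\backslash \widetilde M)/[G:G_i] = \rho^{(2)}(\widetilde M;\caln(G))$. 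Now $\rho^{\IZ}(G_i\backslash\widetilde M) = \sum_n (-1)^n \ln\bigl|\tors(H_n(G_i\backslash\widetilde M))\bigr|$ by Definition~\ref{def:integral_torsion}, while Poincaré duality combined with the Universal Coefficient Theorem yields $\tors(H_n(G_i\backslash\widetilde M)) \cong \tors(H_{d-1-n}(G_i\backslash\widetilde M))$, so $n = m$ is the only self-dual index. The remaining, and crucial, step is a torsion analogue of the Singer Conjecture: $\lim_{i} \ln\bigl|\tors(H_n(G_i\backslash\widetilde M))\bigr|/[G:G_i] = 0$ for all $n \neq m$. Granting it, every term of the alternating sum except $n = m$ disappears in the limit, and one reads off $\lim_{i} \ln\bigl|\tors(H_m(G_i\backslash\widetilde M))\bigr|/[G:G_i] = (-1)^m\rho^{(2)}(\widetilde M;\caln(G))$, which in addition forces $(-1)^m\rho^{(2)}(\widetilde M;\caln(G)) \ge 0$. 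For $d = 2m$ even one wants the same vanishing in \emph{all} degrees $n$; this cannot be extracted from $\rho^{(2)}$, since Poincaré duality already forces the alternating sum $\rho^{\IZ}(G_i\backslash\widetilde M)$ to vanish identically, so an independent input is needed here too.

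Thus the conclusion is conditional in general --- it rests on the Singer Conjecture and on the degreewise torsion vanishing --- but it becomes unconditional in the situations of the introduction. If $M$ carries a non-trivial $S^1$-action, then $\ev_x\colon S^1 \to M$ is $\pi_1$-injective with infinite image, so Corollary~\ref{cor:S1_action_on_a_aspherical_manifold} applies and assertions~\eqref{the:fibrations:limit_of_mg}--\eqref{the:fibrations:L2-torsion} of Theorem~\ref{the:fibrations} hold for $\overline{X} = \widetilde M$ and $\phi = \id$; in particular all six nets converge to $0$, $\rho^{(2)}(\widetilde M;\caln(G)) = 0$ by~\eqref{the:fibrations:L2-torsion}, and $\chi(M) = \sum_n (-1)^n b_n^{(2)}(\widetilde M;\caln(G)) = 0$, so the middle-dimensional formulas in both parts hold with all quantities zero. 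The case in which $\pi_1(M)$ has an infinite normal elementary amenable subgroup is handled the same way via Corollary~\ref{cor:Groups_containing_a_normal_infinite_nice_subgroups}. The main obstacle is precisely the degreewise torsion collapse $\ln\bigl|\tors(H_n(G_i\backslash\widetilde M))\bigr|/[G:G_i] \to 0$ away from the middle dimension: the $L^2$-torsion only controls the alternating combination, and outside the fibred or amenable setting of Theorem~\ref{the:fibrations} there is at present no mechanism forcing each summand separately to vanish.
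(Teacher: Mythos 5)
The statement you were asked to prove is stated in the paper as a \emph{conjecture}, and the paper contains no proof of it; the only thing it establishes is the special case recorded in Corollary~\ref{cor:Groups_containing_a_normal_infinite_nice_subgroups} (a non-trivial $S^1$-action, or an infinite elementary amenable normal subgroup in $\pi_1(M)$), where every quantity in the conjecture vanishes. Your proposal is sound precisely in this sense: you do not claim an unconditional proof, but reduce assertion~\eqref{con:Homological_growth_and_L2-torsion_for_aspherical_manifolds:Betti} to Lück's Approximation Theorem plus the Singer Conjecture (with the correct Euler characteristic and Poincaré duality bookkeeping in the middle dimension), and assertion~\eqref{con:Homological_growth_and_L2-torsion_for_aspherical_manifolds:tors} to Conjecture~\ref{con:Approximation_Conjecture_for_Fulgede-Kadison_determinants_and_L2-torsion}~\eqref{con:Approximation_Conjecture_for_Fulgede-Kadison_determinants_and_L2-torsion:homology} together with a degreewise torsion vanishing away from the middle dimension, which you rightly identify as an independent input that the alternating sum $\rho^{\IZ}$ cannot supply (and which, for even $d$, is all there is, since torsion linking duality makes $\rho^{\IZ}(G_i\backslash\widetilde{M})$ vanish identically --- modulo the small caveat that the quotients need not be orientable, so one should pass to orientation covers or index-two subgroups there). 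Your unconditional part coincides with the paper's own evidence: the $S^1$-case via Corollary~\ref{cor:S1_action_on_a_aspherical_manifold} and the elementary amenable case via Corollary~\ref{cor:Groups_containing_a_normal_infinite_nice_subgroups}, both of which the paper deduces from Theorem~\ref{the:fibrations}, using Hillman--Linnell to produce an infinite abelian normal subgroup and Wegner/Li--Thom for $\rho^{(2)}(\widetilde{M};\caln(G)) = 0$ when the base of the resulting fibration is not finite. So your write-up matches the paper's standpoint: partial results in the fibred/amenable setting, with the general case resting on the Singer Conjecture, the approximation conjecture for torsion, and the degreewise collapse of homological torsion outside the middle dimension.
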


For a brief survey on elementary amenable groups we refer for instance 
to~\cite[Section 6.4.1 on page 256ff]{Lueck(2002)}. Solvable groups are
examples of elementary amenable groups. Every elementary amenable group is amenable,
the converse is not true in general. 

Some evidence for the two conjectures above comes from

\begin{corollary}
\label{cor:Groups_containing_a_normal_infinite_nice_subgroups}
Let $M$ be an aspherical closed manifold with fundamental group $G = \pi_1(M)$.
Suppose that $M$ carries a non-trivial $S^1$-action or suppose
that $G$ contains a non-trivial  elementary amenable normal subgroup. Then we get for all $n \ge 0$
\begin{eqnarray*}
\lim_{i \to \infty} \frac{b_n(G_i\backslash \widetilde{M};K)}{[G:G_i]}  
& = & 
0;
\\
\lim_{i \in I} \;\frac{\mg\bigl(H_n(G_i\backslash \widetilde{M})\bigr)}{[G:G_i]}
& = & 
0;
\\
\lim_{i \in I} \;\frac{\ln\big(\bigl|\tors\bigl(H_n(G_i\backslash \widetilde{M})\bigr)\bigr|\bigr)}{[G:G_i]}
& = & 
0;
\\
\lim_{i\in I} \frac{\rho^{(2)}\bigl(G_i\backslash \widetilde{M};\caln(\{1\})\bigr)}{[G:G_i]}
& = & 
0;
\\
\lim_{i\in I} \frac{\rho^{\IZ}\bigl(G_i\backslash \widetilde{M}\bigr)}{[G:G_i]}
& = & 
0;
\\
b_n^{(2)}(\widetilde{M};\caln(G)) 
& = & 
0;
\\
\rho^{(2)}(\widetilde{M};\caln(G)) 
& = & 
0.
\end{eqnarray*}
In particular Conjecture~\ref{con:Approximation_Conjecture_for_Fulgede-Kadison_determinants_and_L2-torsion}
and Conjecture~\ref{con:Homological_growth_and_L2-torsion_for_aspherical_manifolds} are true.
\end{corollary}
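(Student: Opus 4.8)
The plan is to deduce Corollary~\ref{cor:Groups_containing_a_normal_infinite_nice_subgroups} from the machinery already in place, namely Corollary~\ref{cor:S1_action_on_a_aspherical_manifold} in the $S^1$-case and Theorem~\ref{the:fibrations} together with Example~\ref{exa:two-connected_base} or a suitable fibration construction in the elementary amenable case. Observe first that once all the relevant hypotheses of Theorem~\ref{the:fibrations} are verified, assertions~\eqref{the:fibrations:limit_of_mg}--\eqref{the:fibrations:L2-torsion} immediately give all seven displayed limits for $\overline{X} = \widetilde{M}$, $G = \pi_1(M)$, $\phi = \id_G$ and $d = \dim M$, since the $(d+1)$-skeleton of $M$ is certainly finite (indeed $M$ is a finite $CW$-complex), and $H_n(G_i\backslash\widetilde M) = 0$ for $n > d$ automatically. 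The remaining claim about the two conjectures then follows by comparing the displayed equalities with the statements in Subsections~\ref{subsec:Approximation_Conjectures}: the vanishing of all $L^2$-Betti numbers makes part~\eqref{con:Approximation_Conjecture_for_Fulgede-Kadison_determinants_and_L2-torsion:homology} of Conjecture~\ref{con:Approximation_Conjecture_for_Fulgede-Kadison_determinants_and_L2-torsion} coincide with the computed limit of $\rho^{\IZ}$, and the vanishing of $\rho^{(2)}(\widetilde M;\caln(G))$ together with $(-1)^n\chi(M)\ge 0$ (forced by $b^{(2)}_n = 0$ for $2n\ne d$ and the Euler--Poincar\'e formula) matches Conjecture~\ref{con:Homological_growth_and_L2-torsion_for_aspherical_manifolds}.

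In the $S^1$-case there is essentially nothing to do: a non-trivial $S^1$-action on the aspherical closed manifold $M$ is covered by Corollary~\ref{cor:S1_action_on_a_aspherical_manifold}, whose proof already records that $\ev_x \colon S^1 \to M$ is $\pi_1$-injective by~\cite[Theorem~1.43 on page~48]{Lueck(2002)}, so the image of $\pi_1(S^1)$ in $G = \pi_1(M)$ is infinite cyclic, and Corollary~\ref{cor:S1_action} applies verbatim. The main work is the elementary amenable case. Here one reduces, by a standard argument on the derived/transfinite-derived series of an elementary amenable group, to the situation in which $G$ contains an infinite cyclic (or at least infinite finitely generated abelian, or locally finite-by-cyclic) normal subgroup $N$; more precisely one uses that a non-trivial elementary amenable normal subgroup contains a non-trivial characteristic abelian subgroup of $G$, and by passing to a suitable term one may assume there is a normal subgroup of $G$ isomorphic to $\IZ$ or to $\IZ/p^\infty$-type, and in all cases one can extract an infinite normal subgroup $N$ which is the directed union of its finitely generated subgroups. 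One then forms the fibration
\[
\widetilde M /N \longrightarrow BN \longrightarrow \text{(classifying space of $G/N$)}
\]
obtained from the extension $1 \to N \to G \to G/N \to 1$, or better, works with the fibration $F \to M \to B$ where $B = BG/N$ and $F$ has $\pi_1(F) = N$, using that $M = BG$ since $M$ is aspherical; the fibre $F$ is then aspherical with $\pi_1(F) = N$.

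The hard part will be verifying the Gottlieb-subgroup hypothesis of Theorem~\ref{the:fibrations}, i.e.\ that the image of $G_1(F) \subseteq \pi_1(F)$ under $\phi \circ \pi_1(j)$ is infinite. Since $N$ is normal in $G = \pi_1(M)$ and $F$ is an aspherical fibre with $\pi_1(F) = N$ sitting inside the fibration $F \to M \to B$, the fibre transport acts on $\pi_1(F) = N$ through conjugation in $G$, and one shows using Lemma~\ref{lem:characterizing_G_1(F)} that the centraliser-type condition defining $G_1(F)$ is met on a large enough subgroup: concretely, because $N$ is abelian (after the reduction above) and normal, every element of $N$ is fixed by the fibre transport, giving $G_1(F) \supseteq$ a finite-index or cofinal piece of $N$, hence $G_1(F)$ maps to an infinite subgroup of $G$. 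One must handle with a little care the case when $N$ is only locally finite (so that no single $\IZ$ sits inside), by instead choosing $N$ abelian and infinite and invoking that $G_1$ of an aspherical space with abelian $\pi_1$ equals the whole $\pi_1$ (the aspherical analogue of Lemma~\ref{lem:characterizing_G_1(F)}~\eqref{lem:characterizing_G_1(F):example}), so that $G_1(F) = N$ maps onto $N$, which is infinite. Once the hypothesis of Theorem~\ref{the:fibrations} is in place, assertions~\eqref{the:fibrations:limit_of_mg}--\eqref{the:fibrations:limit_of_torsion} yield the first five displayed limits and $b^{(2)}_n(\widetilde M;\caln(G)) = 0$; the last equality $\rho^{(2)}(\widetilde M;\caln(G)) = 0$ comes from assertion~\eqref{the:fibrations:L2-torsion}, for which one additionally needs $\rho^{(2)}(\overline F;\caln(H)) = 0$ with $H$ the image of $N = \pi_1(F)$ in $G$ — but $H$ is an infinite (amenable, indeed abelian-ish) group acting freely cocompactly after a further fibration reduction exactly as in the proof of Corollary~\ref{cor:connected_Lie_groups_as_fibers}, where one splits off a $\IZ$-factor and uses $\rho^{(2)}(\widetilde{S^1};\caln(\IZ)) = 0$ from~\cite[(3.24) on page~136]{Lueck(2002)} together with the product formula~\cite[Theorem~3.93 on page~161]{Lueck(2002)}. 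This completes the verification of all seven limits, and the final sentence about the two conjectures follows by the comparison described above.
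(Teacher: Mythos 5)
Your treatment of the $S^1$-case and of the first six displayed equalities follows the paper's route: one invokes Hillman--Linnell to produce a non-trivial normal abelian subgroup $A \subseteq G$ (the paper then uses that $G$, being the fundamental group of an aspherical closed manifold, is torsionfree, so $A$ is automatically infinite -- a cleaner step than your sketch via derived series and ``$\IZ/p^\infty$-type'' subgroups, which cannot occur here anyway), forms the fibration $BA \to M \to BQ$ with $Q = G/A$ using $M \simeq BG$, checks $G_1(BA) = \pi_1(BA)$ via Lemma~\ref{lem:characterizing_G_1(F)}~\eqref{lem:characterizing_G_1(F):aspherical}, and applies Theorem~\ref{the:fibrations} with $\phi = \id_G$. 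Up to this point your proposal is essentially the paper's argument.

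There is, however, a genuine gap in your derivation of the last equality $\rho^{(2)}(\widetilde M;\caln(G)) = 0$. You want to get it from assertion~\eqref{the:fibrations:L2-torsion} of Theorem~\ref{the:fibrations}, but that assertion requires \emph{both} the fibre $F$ and the base $B$ to be connected \emph{finite} $CW$-complexes. In the fibration $BA \to M \to BQ$ there is no reason for either to be finite: $A$ need not be finitely generated, and even when $F = BA$ happens to admit a finite model, the base $BQ$ for $Q = G/A$ generally does not. The paper explicitly flags exactly this obstruction (``since we do not know whether $BQ$ is finite'') and therefore does \emph{not} obtain the $L^2$-torsion vanishing from Theorem~\ref{the:fibrations}; instead it quotes the result of Wegner~\cite{Wegner(2009)} (see also Li--Thom~\cite{Li-Thom(2012)}) that $\rho^{(2)}(\widetilde M;\caln(G)) = 0$ for an aspherical closed manifold whose fundamental group contains a non-trivial elementary amenable normal subgroup. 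Relatedly, your verification of the side condition $\rho^{(2)}(\overline F;\caln(H)) = 0$ by ``splitting off a $\IZ$-factor'' as in Corollary~\ref{cor:connected_Lie_groups_as_fibers} also presupposes a finite free $CW$-structure on the relevant covering of the fibre, which again is unavailable for a general infinite abelian $A$. So the final equality needs either the external input from Wegner's theorem or a genuinely new argument; as written, your appeal to assertion~\eqref{the:fibrations:L2-torsion} does not apply.
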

\begin{proof}
The case of a non-trivial $S^1$-action has already been taken care of
in Corollary~\ref{cor:S1_action_on_a_aspherical_manifold}. 

Now suppose that  $G$ contains a non-trivial  elementary amenable normal subgroup.
We conclude from~\cite[Corollary 2 on page 240]{Hillman-Linnell(1992)}
that there is a non-trivial  normal abelian subgroup $A$ of $G$.
Since $G$ and hence $A$ are torsionfree, $A$ is infinite.  We obtain an exact
sequence of groups $1 \to A \to G \to Q$ for $Q = G/A$. Since $M$ is aspherical and hence
a model for $BG$, we obtain a fibration $BA\to M \to BQ$.  We conclude
$\pi_1(BA) = G_1(BA)$ from 
Lemma~\ref{lem:characterizing_G_1(F)}~\eqref{lem:characterizing_G_1(F):aspherical}. Hence we can apply
Theorem~\ref{the:fibrations} in the case $\phi = \id_G$ and therefore all claims follow except
$\rho^{(2)}(\widetilde{M};\caln(G)) = 0$ (since we do not know whether $BQ$ is finite.) But
$\rho^{(2)}(\widetilde{M};\caln(G)) = 0$ has already been proved in~\cite{Wegner(2009)},
see also~\cite{Li-Thom(2012)}.

\end{proof}

\subsection{Homological growth of groups}
\label{subsec:Homological_growth_of_groups}

For a group $G$ we write $H_n(G) := H_n(BG)$ and $b_n(G;K) := b_n(BG;K)$ for the classifying space $BG$. 

\begin{theorem}\label{the:growth_for_groups}
Consider a natural number $d$ and a residually finite group $G$ such that there is a model for
$BG$ with finite $(d+1)$-skeleton.  Assume either that 
$G$ contains a normal infinite  solvable subgroup or that $G$ is virtually torsionfree with finite virtual 
cohomological dimension and contains a normal infinite elementary amenable subgroup.
Then we get for all $n \le d$
\begin{eqnarray*}
\lim_{i\in I} \frac{\mg(H_n(G_i))}{[G:G_i]}  & = & 0;
\\
\lim_{i\in I} \frac{\ln\bigl(\bigl|\tors(H_n(G_i))\bigr|\bigr)}{[G:G_i]} & = & 0;
\\
\lim_{i\in I} \frac{b_n(G_i;K)}{[G:G_i]}  &= & 0.
\end{eqnarray*}
\end{theorem}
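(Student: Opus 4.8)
The plan is to reduce everything to Theorem~\ref{the:fibrations}. The first and main step is to show that, under either hypothesis, $G$ contains a normal subgroup $A$ that is infinite and abelian. Granting this, set $Q := G/A$. I would choose compatible $CW$-models so that the extension $1 \to A \to G \to Q \to 1$ is realized by a fibration $BA \xrightarrow{j} BG \xrightarrow{f} BQ$ of connected $CW$-complexes with $BG$ the given model, still having finite $(d+1)$-skeleton. Since $A$ is abelian, Lemma~\ref{lem:characterizing_G_1(F)}\eqref{lem:characterizing_G_1(F):aspherical} gives $G_1(BA) = \pi_1(BA) = A$, and $\pi_1(j)$ is the inclusion $A \hookrightarrow G$, whose image is infinite. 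Hence Theorem~\ref{the:fibrations} applies with $X = BG$, $\phi = \operatorname{id}_G$ and $\overline{X} = EG$. As $G_i$ acts freely on $EG$ we have $G_i\backslash\overline{X} = BG_i$ and $H_n(G_i\backslash\overline{X}) = H_n(G_i)$, so assertions~\eqref{the:fibrations:limit_of_mg},~\eqref{the:fibrations:limit_of_torsion_in_homology} and~\eqref{the:fibrations:L2-Betti} of Theorem~\ref{the:fibrations} give exactly the three claimed limits for $n \le d$. Only these three assertions are used, and they need no finiteness of $BQ$ or of $BG$ beyond the skeleton hypothesis already in force.

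To produce $A$ when $G$ has a normal infinite solvable subgroup $S$, I would walk down the derived series $S = S^{(0)} \trianglerighteq \cdots \trianglerighteq S^{(\ell)} = \{1\}$, whose terms are characteristic in $S$ and hence normal in $G$, and take the last infinite term $N := S^{(k)}$; then $N \trianglelefteq G$ is infinite with $[N,N] = S^{(k+1)}$ finite. Passing to $C := C_N([N,N])$ --- normal in $G$, of finite index in $N$ (as $N/C$ embeds into $\operatorname{Aut}([N,N])$) hence infinite, and nilpotent of class $\le 2$ with $[C,C] \subseteq Z(C)$ finite --- I would set $A := Z(C)$, which is characteristic in $C$, hence normal in $G$, and abelian. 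The remaining point is that $A$ is infinite, and here the residual finiteness of $G$ is essential: $C$ is residually finite, being a subgroup of $G$, and I claim that a residually finite group which is nilpotent of class $\le 2$ with finite commutator subgroup has $[C:Z(C)] < \infty$. For this one studies the commutator pairing $\beta\colon V \times V \to [C,C]$ on $V := C/Z(C)$, which is bilinear, alternating and nondegenerate (its radical being $Z(C)/Z(C)$); for any finite index normal $L \trianglelefteq C$ one has $[L,C] = \langle\beta(U,V)\rangle$ where $U := LZ(C)/Z(C)$ has finite codimension in $V$; were $V$ infinite, the intersection $D$ of all $\langle\beta(U,V)\rangle$ over finite codimension $U$ would be nontrivial --- it equals $\langle\beta(U_0,V)\rangle$ for a single finite codimension $U_0$ since $[C,C]$ is finite, and $D = \{1\}$ would force $U_0 \subseteq \operatorname{rad}(\beta) = \{1\}$ against $U_0$ being infinite --- so a nontrivial element of $D$ would lie in every finite index normal subgroup of $C$, contradicting residual finiteness. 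Thus $V$ is finite and $A$ is infinite.

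When instead $G$ is virtually torsionfree of finite virtual cohomological dimension with a normal infinite elementary amenable subgroup $E$, I would reduce to the solvable case. Choose a torsionfree normal $G_0 \trianglelefteq G$ of finite index with $\cd(G_0) < \infty$, and set $E_0 := E \cap G_0 \trianglelefteq G$; it is torsionfree, infinite (of finite index in $E$) and elementary amenable, hence of finite Hirsch length, and of positive Hirsch length since it is torsionfree and infinite. By the Hillman--Linnell structure theory~\cite{Hillman-Linnell(1992)}, used exactly as in the proof of Corollary~\ref{cor:Groups_containing_a_normal_infinite_nice_subgroups}, $E_0$ has a nontrivial normal abelian subgroup; this lies in the Fitting subgroup $\operatorname{Fitt}(E_0)$, which is nilpotent (as $E_0$ has finite Hirsch length) and torsionfree, hence infinite, and which is characteristic in $E_0 \trianglelefteq G$, hence normal in $G$. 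So $S := \operatorname{Fitt}(E_0)$ is a normal infinite nilpotent, in particular solvable, subgroup of $G$, and the previous case applies.

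I expect the main obstacle to be exactly this group-theoretic extraction of $A$. Arranging normality in all of $G$ is routine once one works with characteristic subgroups, but guaranteeing infiniteness is delicate, since a normal subgroup of $G$ can a priori be nilpotent of class $2$ with finite centre; it is precisely the residual finiteness of $G$ that excludes this, via the argument above. Correspondingly, it is the virtual-torsionfreeness and finite-$\vcd$ hypotheses that prevent the elementary amenable case from degenerating: without them $E$ need only be locally-finite-by-virtually-solvable, and the infinite locally finite part may obstruct the existence of any infinite abelian normal subgroup.
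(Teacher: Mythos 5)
Your overall architecture is exactly the paper's: extract an infinite abelian normal subgroup $A \trianglelefteq G$, form the fibration $BA \to BG \to BQ$ for $Q = G/A$, use Lemma~\ref{lem:characterizing_G_1(F)}~\eqref{lem:characterizing_G_1(F):aspherical} to get $G_1(BA) = \pi_1(BA) = A$, and quote assertions~\eqref{the:fibrations:limit_of_mg},~\eqref{the:fibrations:limit_of_torsion_in_homology} and~\eqref{the:fibrations:L2-Betti} of Theorem~\ref{the:fibrations} with $\phi = \id_G$ and $\overline{X} = EG$, $G_i\backslash EG = BG_i$. Where you genuinely diverge is the group theory that produces $A$. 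In the solvable case the paper also takes the last infinite derived term $S'$ (so $[S',S']$ is finite), but then uses residual finiteness of $G$ to find a finite-index normal $H' \trianglelefteq G$ with $H' \cap [S',S'] = \{1\}$ (plus a characteristic-core step using that $G$ is finitely generated) and sets $A = H \cap S'$, which injects into $S'/[S',S']$; you instead pass to $C = C_N([N,N])$, a normal subgroup of $G$ of finite index in $N$, nilpotent of class $\le 2$ with finite commutator subgroup, and prove via the commutator pairing on $C/Z(C)$ together with residual finiteness of $C$ that $[C:Z(C)] < \infty$, taking $A = Z(C)$. I checked this pairing argument (downward directedness of the subgroups $[L,C]$ of the finite group $[C,C]$, nondegeneracy forcing the intersection $D$ to be nontrivial if $C/Z(C)$ were infinite, and $D \subseteq [L,C] \subseteq L$ contradicting residual finiteness); it is correct and has the mild advantage of not invoking finite generation. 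In the elementary amenable case the routes differ more: the paper passes to a torsionfree finite-index subgroup $G'$ and the induced system of subgroups, applies~\cite{Hillman-Linnell(1992)} to $G'$ to get an infinite abelian normal subgroup of $G'$, and transfers the limits back via the index formula; you stay with $G$ itself and manufacture a normal-in-$G$ infinite nilpotent subgroup $\operatorname{Fitt}(E \cap G_0)$ for a torsionfree normal finite-index $G_0$, thereby reducing to the solvable case. Your variant buys that no comparison of the given system $\{G_i\}$ with a chosen finite-index subgroup is needed, at the price of some Fitting-subgroup facts the paper avoids.

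Two points in that last step should be made explicit, though both are true in your setting. Finiteness of the Hirsch length of $E_0 = E \cap G_0$ does not follow from torsionfreeness and elementary amenability alone, as your sentence suggests; it follows from $h(E_0) \le \cd(E_0) \le \cd(G_0) < \infty$, which is exactly where the finite-$\vcd$ hypothesis is used and which requires the (citable) inequality $h \le \cd$ for elementary amenable groups. Likewise, nilpotency of $\operatorname{Fitt}(E_0)$ needs an argument: since $E_0$ is torsionfree of finite Hirsch length $h$, every normal nilpotent subgroup is torsionfree of Hirsch length $\le h$ and hence of class $\le h$ (torsionfree nilpotent groups have torsionfree upper central factors), so by Fitting's theorem every finite product of normal nilpotent subgroups again has class $\le h$, and $\operatorname{Fitt}(E_0)$, being the directed union of these, is nilpotent. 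With these justifications (or references) inserted, your proof is complete.
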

\begin{proof} Let $S \subseteq G$ be an infinite  normal solvable subgroup. 
The commutator subgroup of a group is a characteristic subgroup. Hence $S$ 
contains an infinite characteristic subgroup $S'$ whose commutator $[S',S']$ is finite. 
The subgroup $S'$ is normal in $G$.
Since $G$ is residually finite and $[S',S']$ is finite, we can find a normal subgroup
$H' \subseteq G$ of finite index with $H' \cap [S',S'] = \{1\}$. Let $H := \bigcap_{\mu \in \aut(G)} \mu(H')$.
Obviously $H$ is a characteristic subgroup of $G$.
Since $G$ is finitely generated and hence contains only finitely many normal subgroup of index $[G:H']$,
$H$ has finite index in $G$. The intersection $A := H \cap S'$ is an infinite normal subgroup of $G$. 
Since $H \cap [S',S'] = \{1\}$, the projection $S'\to S'/[S',S']$ is injective on $A$.
Hence $A$ is an infinite abelian normal subgroup of $G$.

Put $Q = G/A$. We obtain a fibration of connected $CW$-complexes
$BA \to BG \to BQ$. We conclude $G_1(BA) = \pi_1(BA)$ from 
Lemma~\ref{lem:characterizing_G_1(F)}~\eqref{lem:characterizing_G_1(F):aspherical}.
Now we can apply Theorem~\ref{the:fibrations} to the fibration
$BA \to BG \to BQ$ for the canonical isomorphism $\phi \colon \pi_1(BG) \to G$.

Let $G' \subseteq G$ be a torsionfree subgroup of finite index
whose cohomological dimension is finite. There is an index $i_0 \in I$ such that
$G_i \subseteq G'$ holds for $i \ge i_0$. Put $G_i' := G_i \cap G'$. Since
$[G:G_i] = [G:G'] \cdot [G':G_i']$ holds for $i \ge i_0$, it suffices to prove the claim
for $G'$ and the system $\{G_i' \mid i \in I\}$.  The group $G'$ contains a  normal 
infinite abelian subgroup $H' \subseteq G'$
by~\cite[Corollary 2 on page 240]{Hillman-Linnell(1992)}.
Now the claim follows for $G'$ and hence for $G$ from the argument above.
\end{proof}

\subsection{Rank gradient and cost}
\label{subsec:Exponent_gradient}

Let $G$ be a finitely generated group. Let $H \subseteq G$ be a subgroup of finite index $[G:H]$. Then 
$\frac{\mg(H) -1}{[G:H]} \le \mg(G)-1$. Therefore the following limit exists and is called
\emph{rank gradient} (see Lackenby~\cite{Lackenby(2005expanders)})
\begin{eqnarray}
RG(G;(G_i)_{i\in I}) &:= & \lim_{i \in I} \frac{\mg(G_i) -1}{[G:G_i]}.
\label{rank_gradient}
\end{eqnarray}
It is not known in general whether this limit is independent of the choice of the system $(G_i)_{i\in I}$.

We have (cf.  Lemma~\ref{lem:properties_of_mg}~\eqref{lem:properties_of_mg:estimate})
\begin{eqnarray*}
   b_1(BG_i;K) 
   & \le & 
   \mg(G_i);
   \\
    \mg(H_1(BG_i)) 
    & \le & 
    b_1(BG_i;\IQ) + \frac{\ln\bigl(|\tors(H_1(BG_i))|\bigr)}{\ln(2)}.
\end{eqnarray*}

\begin{question}\label{que:RG_and_limits_of_Betti_numbers}
For which finitely generated groups $G$, sequences $(G_i)_{i \in I}$ with $\bigcap_{i \in I} G_i = \{1\}$ and fields $K$, do  we have
\[RG(G;(G_i)_{i\in I}) =  \limsup_{i \in I} \frac{b_1(G_i;K)}{[G:G_i]}?\]
\end{question}
\begin{question}\label{que:limit_independent_of_K}
For which groups $G$,  does the limit $\lim_{i \in I} \frac{b_1(G_i;K)}{[G:G_i]}$ exist for
all systems $(G_i)_{i\in I}$ with $\bigcap_{i \in I} G_i = \{1\}$ and fields $K$ and is independent of the choice of 
$(G_i)_{i\in I}$ and $K$?
\end{question}

Ab\'ert-Nikolov~\cite[Theorem~3]{Abert-Nikolov(2012)}
have shown for a finitely presented residually finite group $G$ which contains a normal infinite amenable subgroup
that $RG(G;(G_i)_{i\in I}) =  0$ holds for all systems $(G_i)_{i\in I}$ (with trivial intersection).
Hence the answer to the Questions~\ref{que:RG_and_limits_of_Betti_numbers} 
and~\ref{que:limit_independent_of_K} above is yes for such groups. 

The questions above is related to questions of 
Gaboriau (see~\cite{Gaboriau(2000b),Gaboriau(2002a),Gaboriau(2002b)}),
whether every essentially free measure preserving Borel action of a group
has the same cost,  and whether the difference of the cost and the first $L^2$-Betti number of a measurable
equivalence relation is always equal to $1$.

The answer to the Questions~\ref{que:RG_and_limits_of_Betti_numbers} 
and~\ref{que:limit_independent_of_K} is negative in general if we drop the condition
that the system $\{G_i\mid i \in I\}$ has non-trivial
intersection, as the following example shows.

\begin{example} Consider a group $H$. Put $G = \IZ \ast H$.
For a natural number $n$ let $G_n \subseteq G$ be the preimage of $n
\cdot \IZ$ under the projection $\pr \colon G= \IZ \ast H \to \IZ$. We obtain
a inverse system of subgroups $\{G_i \mid i = 1,2,3, \ldots \}$ directed by the property $i_1$ divides $i_2$ satisfying 
$\bigcap_{i \ge 1} G_i = \ker(\pr)$.
Let $BG_i \to BG$ be the covering of $BG$ associated
to $G_i \subseteq G$. Then $BG_i$ is homeomorphic to $S^1 \vee \bigvee_{j= 1}^i BH$.
We have
\[G_i \cong \pi_1(BG_i) \cong \pi_1(S^1 \vee \bigvee_{j = 1}^i BH) \cong \IZ \ast \ast_{j=1}^i H.\]
Since $\mg(A \ast B) = \mg(A ) + \mg(B)$ holds (see~\cite[Corollary~2 in Section~8.5 on page~227]{Cohen(1989)}, we conclude
\begin{eqnarray*}
H_1(G_i;K) & = & K \oplus \bigoplus_{j=1}^i H_1(H;K);
\\
H_1(G_i) & = & \IZ\oplus \bigoplus_{j=1}^i H_1(H);
\\
\mg(G_i) & = & 1 + i \cdot \mg(H);
\\
\lim_{i \to \infty} \frac{b_1(G_i;K)}{i} & = & b_1(H;K);
\\
\lim_{i \to \infty} \frac{\mg(H_1(G_i))}{i} & = & \mg(H_1(H));
\\
R(G;(G_i)_{i \ge 1}) & = & \mg(H).
\end{eqnarray*}
Let $q$ be a prime different from $p$. Put $H = \IZ/p \ast \IZ/q \ast \IZ/q$.
Then $b_1(H;\IQ) = 0$, $b_1(H;\IF_p) = 1$, $\mg(H_1(H)) = 2$, and $\mg(H) = 3$.
Hence we obtain
\[\lim_{i \to \infty} \frac{b_1(G_i;\IQ )}{i} < \lim_{i \to \infty} \frac{b_1(G_i;\IF_p)}{i} 
< \lim_{i \to \infty} \frac{\mg(H_1(G_i))}{i} < R(G;(G_i)_{i \ge 1}).\]

Obviously $BH$ can be chosen to be of finite type. Let $BH^{(2)}$ be its
two-skeleton. Put $X = S^1 \vee BH^{(2)}$.  This is a finite $2$-dimensional
$CW$-complex.  By Maunder's short proof of the Kan-Thurston Theorem
(see~\cite{Maunder(1981)}), we can find a group $\Gamma$ with a finite
$2$-dimensional model for $B\Gamma$ together with a map $f \colon B\Gamma \to X$
such that for any local coefficient system $M$ of $X$ the map $f$ induces an
isomorphism $f_* \colon H_n(B\Gamma;f^*M) \xrightarrow{\cong} H_n(X;M)$ and the
map $\pi_1(f) \colon \pi_1(B\Gamma) = \Gamma \to G = \pi_1(X)$ is surjective. If
$Z$ is a connected $CW$-complex, $H \subseteq \pi_1(Z)$ is a subgroup,
$K[\pi_1(Z)/H]$ is the obvious coefficient system on $Z$, and $\overline{Z} \to
Z$ is the $H$-covering associated to the subgroup $H \subseteq \pi_1(Z)$, then
$H_n(\overline{Z};K) \cong H_n(Z;K[\pi_1(Z)/H])$.  Let $\Gamma_i \subseteq
\Gamma$ be the preimage of $G_i$ under the epimorphism $\pi_1(f) \colon
\pi_1(B\Gamma) = \Gamma \to G := \pi_1(X)$.  We conclude
\begin{eqnarray*}
b_1(\Gamma_i;K) & = & b_1(G_i;K);
\\
\mg(H_1(\Gamma_i)) & = & \mg(H_1(G_i)); 
\\
\mg(\Gamma_i) & \ge & \mg(G_i).
\end{eqnarray*}
Hence we get
\[\lim_{i \to \infty} \frac{b_1(\Gamma_i;\IQ )}{i} < \lim_{i \to \infty} \frac{b_1(\Gamma_i;\IF_p)}{i} 
< \lim_{i \to \infty} \frac{\mg(H_1(G_i))}{i} < R(\Gamma;(\Gamma_i)_{i \ge 1}).\]
The advantage of the more elaborate construction using Maunder's result  is
that there is a finite $2$-dimensional model for $B\Gamma$ and $\Gamma$ is in particular torsionfree.
On the other hand we have no idea what the group $\Gamma$ is.

Other examples of this kind can be found in~\cite{Bergeron-Linnell-Lueck-Sauer(2012),Ershof-Lueck(2012)}.
\end{example}


 \typeout{------------------------   Section 2: Preliminaries --------------------}

\section{Preliminaries}
\label{sec:Preliminaries}

In this section we present some preliminaries for the proof of
our main Theorem~\ref{the:fibrations}.

\subsection{Gottlieb's subgroup of the fundamental
  group}
\label{subsec:Gottlieb's_subgroup_of_the_fundamental_group}

We have defined $G_1(F,x_0)   \subseteq \pi_1(F,x_0)$ in Definition~\ref{def:G_1(F)}. We collect some basic properties.

If $v$ is a path in $F$ from $x_0$ to $x_1$, then the associated
isomorphism $c_w \colon \pi_1(F,x) \to \pi_1(F,y)$ given by 
$[w] \mapsto [v^- \ast w \ast v]$ induces an isomorphism 
$c_w' \colon G_1(F,x) \xrightarrow{\cong} G_1(F,y)$ which is the identity in the case 
$x = y$. Therefore we can and will suppress the base point $x_0 \in F$.

The subgroup $G_1(F)$ was originally defined by
Gottlieb~\cite{Gottlieb(1965)}. The elementary proof of the next lemma can be
found in~\cite{Gottlieb(1965)} and~\cite[Proposition~4.3]{Lueck(1987)}.

\begin{lemma}\label{lem:characterizing_G_1(F)} Let $F$ be a connected $CW$-complex. Then:
  \begin{enumerate}

  \item \label{lem:characterizing_G_1(F):universal_covering}
   An element $g \in \pi_1(F)$ belongs to $G_1(F)$ if and only if it belongs to
  the center of $\pi_1(F)$ and the map $l_g \colon \widetilde{F} \to \widetilde{F}$ on
  the universal covering given by multiplication with $g$ is
  $\pi_1(F)$-homotopic to the identity;

\item \label{lem:characterizing_G_1(F):aspherical} $G_1(F)$ is contained in the
  center of $\pi_1(F)$. If $F$ is aspherical, $G_1(F)$ agrees with the center of
  $\pi_1(F)$;

\item \label{lem:characterizing_G_1(F):example} If $F$ is a connected Lie group,
  or more generally a connected H-space, then $G_1(F) = \pi_1(F)$;

\item \label{lem:characterizing_G_1(F):products} If $F'$ is another connected
  $CW$-complex, then we obtain a canonical isomorphism
  \[G_1(F \times F') \xrightarrow{\cong} G_1(F) \times G_1(F');\]

  \item \label{lem:characterizing_G_1(F):homotopy_invariance} 
  If $f \colon F \to F'$ is a pointed homotopy equivalence
  of connected $CW$-complexes, then it induces an isomorphism
   \[
   G_1(F) \xrightarrow{\cong} G_1(F');
   \]

   \item \label{lem:characterizing_G_1(F):S1-splitting}  $F$ is homotopy equivalent to 
   $S^1 \times F'$ for some $CW$-complex $F'$ if and only there exists an isomorphism
   $f \colon \IZ \times H\to \pi_1(F)$ for some group $H$ such that $f(\IZ) \subseteq G_1(F)$.

\end{enumerate}
\end{lemma}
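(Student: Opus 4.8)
The plan is to prove assertion~\eqref{lem:characterizing_G_1(F):universal_covering} first and deduce the rest from it together with a few explicit constructions of cyclic homotopies. Given $[w] \in G_1(F)$ witnessed by a cyclic homotopy $h \colon F \times [0,1] \to F$ with $h_0 = h_1 = \id_F$ and basepoint track $w$, I would lift $h$ to a homotopy $\widetilde h \colon \widetilde F \times [0,1] \to \widetilde F$ on the universal covering with $\widetilde h_0 = \id_{\widetilde F}$. Since $h_1 = \id_F$, the map $\widetilde h_1$ covers $\id_F$ and is hence a deck transformation, and tracing the basepoint identifies it with $l_g$, $g = [w]$ (up to the chosen convention, possibly with $g^{-1}$). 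The key observation is equivariance: for $\gamma \in \pi_1(F)$ the paths of maps $t \mapsto \gamma \circ \widetilde h_t$ and $t \mapsto \widetilde h_t \circ \gamma$ are both lifts of $t \mapsto h_t \circ p$ agreeing at $t = 0$, hence agree, so $\widetilde h$ is a $\pi_1(F)$-homotopy from $\id_{\widetilde F}$ to $l_g$; in particular $l_g$ commutes with all deck transformations, so $g$ is central. Conversely, a $\pi_1(F)$-equivariant homotopy from $\id_{\widetilde F}$ to $l_g$ descends to a cyclic homotopy of $F$ whose basepoint track represents $g$. This proves~\eqref{lem:characterizing_G_1(F):universal_covering}, and its first consequence is the ``$G_1(F)\subseteq$ center'' part of~\eqref{lem:characterizing_G_1(F):aspherical}.

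The remaining assertions now follow by standard manipulations. For the aspherical case in~\eqref{lem:characterizing_G_1(F):aspherical}: when $F$ is aspherical, $\widetilde F$ is a model for $E\pi_1(F)$, and for central $g$ both $l_g$ and $\id_{\widetilde F}$ are $\pi_1(F)$-maps $E\pi_1(F) \to E\pi_1(F)$, hence $\pi_1(F)$-homotopic, so $g \in G_1(F)$ by~\eqref{lem:characterizing_G_1(F):universal_covering}. For~\eqref{lem:characterizing_G_1(F):example}: for a connected $CW$ $H$-space I would first replace the multiplication by one with a strict two-sided unit — possible since $F \vee F \hookrightarrow F \times F$ is a cofibration and the multiplication restricted to $F\vee F$ is homotopic to the fold map — and then for $[w] \in \pi_1(F,x_0)$ set $h_t(x) = \mu(x, w(t))$, a cyclic homotopy with basepoint track $w$, giving $G_1(F) = \pi_1(F)$. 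For~\eqref{lem:characterizing_G_1(F):products}: the product of cyclic homotopies of $F$ and $F'$ with tracks $w, w'$ is a cyclic homotopy of $F \times F'$ with track $(w,w')$, and conversely restricting a cyclic homotopy of $F\times F'$ to $F \times \{x_0'\}$ and projecting to $F$ produces one on $F$; this yields the claimed isomorphism compatibly with $\pi_1(F\times F') \cong \pi_1(F)\times\pi_1(F')$. For~\eqref{lem:characterizing_G_1(F):homotopy_invariance}: given a pointed homotopy equivalence $f$ with pointed inverse $g$, transport a cyclic homotopy $h$ of $F$ to $\eta_t := f \circ h_t \circ g$ on $F'$; this runs from $fg$ to $fg$, so I would concatenate it with a pointed homotopy $\id_{F'} \simeq fg$ and its reverse — whose basepoint tracks are constant — to obtain a genuine cyclic homotopy of $F'$ with track representing $\pi_1(f)([w])$, and symmetry upgrades the inclusion to an isomorphism.

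The bulk of the work is~\eqref{lem:characterizing_G_1(F):S1-splitting}. The implication from $F \simeq S^1\times F'$ to the splitting of $\pi_1(F)$ follows from~\eqref{lem:characterizing_G_1(F):products} and~\eqref{lem:characterizing_G_1(F):homotopy_invariance}, using $G_1(S^1) = \pi_1(S^1) = \IZ$ from~\eqref{lem:characterizing_G_1(F):example}. For the converse, write $\pi := \pi_1(F)$, identify it with $\IZ \times H$ via the given isomorphism, put $g := $ image of the generator of the $\IZ$-factor, and set $F' := \widetilde F / H$. Since $H$ is a direct factor it is normal, so $F' \to F$ is a covering with infinite cyclic deck group and $\pi_1(F') = H$, and $F$ is the quotient of $F'$ by the free properly discontinuous $\IZ$-action whose generator $\sigma$ is the deck transformation induced by $l_g$ (well defined since $g$ is central). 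By~\eqref{lem:characterizing_G_1(F):universal_covering} there is a $\pi$-equivariant homotopy $\id_{\widetilde F}\simeq l_g$; it is in particular $H$-equivariant, so it descends to a homotopy $\id_{F'}\simeq\sigma$. Hence $F = F'/\IZ$ is homotopy equivalent to the mapping torus of $\sigma$ — via the comparison map $(F'\times\IR)/\IZ \to F'/\IZ = F$ induced by the projection, which on universal covers becomes the projection $\widetilde{F'}\times\IR \to \widetilde{F'}$ and which one checks to be a $\pi_1$-isomorphism — and the mapping torus of $\sigma$ is homotopy equivalent to the mapping torus of $\id_{F'}$, namely $S^1\times F'$. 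I expect the only genuinely delicate point to be this last step in~\eqref{lem:characterizing_G_1(F):S1-splitting}: keeping straight that $\sigma$ descends as claimed and that the comparison map induces an isomorphism on fundamental groups; the rest is routine lifting and concatenation of homotopies.
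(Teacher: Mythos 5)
The paper offers no proof of this lemma at all: it simply refers the reader to Gottlieb's original article and to \cite[Proposition~4.3]{Lueck(1987)}. Your argument is correct and follows exactly the standard route taken in those references: the lift of a cyclic homotopy to a $\pi_1(F)$-equivariant homotopy $\id_{\widetilde{F}} \simeq l_g$ as the workhorse characterization, the equivariant Whitehead/uniqueness argument for the aspherical case, strictifying the unit of a well-pointed $H$-space (a standard fact) for assertion three, and the mapping-torus comparison $F \simeq (F' \times \IR)/\IZ \simeq S^1 \times F'$ for the splitting statement, where your care about $\sigma$ descending and the comparison map being a $\pi_1$-isomorphism (five lemma on the two extensions of $\IZ$ by $H$, plus the projection of universal covers) is exactly what is needed.
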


\subsection{Relating $L^2$-torsion and $\IZ$-torsion}
\label{subsec:Relating_L2-torsion_and_Z-torsion}

Let $C_*$ be a finite based free $\IZ$-chain complex, for instance the cellular
chain complex $C_*(X) $ of a finite $CW$-complex. Let $H_n^{(2)}(C_*^{(2)})$ be
the $L^2$-homology of $C_*^{(2)}$ with respect to the von Neumann algebra
$\caln(\{1\}) = \IC$. The underlying complex vector space is the homology
$H_n(\IC \otimes_{\IC} C_*)$ of $\IC \otimes_{\IZ} C_*(X)$ but it comes now with
the structure of a Hilbert space.  For the reader's convenience we recall this
Hilbert space structure. Recall that $\IC \otimes_{\IZ} C_n(X)$ inherits from
the cellular $\IZ$-basis on $C_n(X)$ and the standard Hilbert space structure on
$\IC$ the structure of a Hilbert space and the resulting $L^2$-chain complex is
denoted by $C_*^{(2)}$.  Let
\[\Delta_n^{(2)} = \bigl(c_n^{(2)}\bigr)^*\circ c_n^{(2)} + c_{n+1}^{(2)} \circ
\bigl(c_{n+1}^{(2)}\bigr)^* \colon C_n^{(2)} \to C_n^{(2)}\] be the associated
Laplacian.  Equip $\ker(\Delta_n^{(2)}) \subseteq C_n^{(2)}$ with the induced
Hilbert space structure.  Equip $H_n^{(2)}(C_*^{(2)})$ with the Hilbert space
structure for which the obvious $\IC$-isomorphism $\ker(\Delta_n^{(2)}) \to
H_n^{(2)}(C_*^{(2)})$ becomes an isometric isomorphism. This is the same as the
Hilbert quotient structure with respect to the projection
$\ker\bigl(c_n^{(2)}\bigr) \to H_n^{(2)}(C_*^{(2)})$, if we equip
$\ker\bigl(c_n^{(2)}\bigr) \subseteq C_n^{(2)}$ with the Hilbert subspace
structure.

\begin{notation} \label{not_M_f} If $M$ is a finitely generated abelian group,
  define
  \begin{eqnarray*}
    M_f & := & M/ \tors(M).
  \end{eqnarray*}
\end{notation}

Choose a $\IZ$-basis on $H_n(C_*)_f$. This and the standard Hilbert space
structure on $\IC$ induces a Hilbert space structure on $\IC\otimes_{\IZ} H_n(C_*)_f$.
We denote this Hilbert space by $\bigl(H_n(C_*)_f\bigr)^{(2)} $.
We have the canonical $\IC$-isomorphism
\begin{eqnarray}
  \alpha_n \colon \bigl(H_n(C_*)_f\bigr)^{(2)} 
  & \xrightarrow{\cong}  & 
  H_n^{(2)}(C_*^{(2)}).
  \label{iso_alpha}
\end{eqnarray}
  
Now we can consider the logarithm of the Fuglede-Kadison
determinant
\[
\ln\bigl({\det}_{\caln(\{1\})}\bigl(\alpha_n \colon \bigl(H_n(C_*)_f)^{(2)} \to
H_n^{(2)}(C_*^{(2)})\bigr)\bigr) \in \IR.
\]
It is independent of the choice of the $\IZ$-basis of $H_n(C_*)_f$, since the absolute value of
the determinant of an invertible matrix over $\IZ$ is always $1$.

\begin{lemma} \label{lem:rho(2)-rhoZ} 
  Let $C_*$ be a finite based free $\IZ$-chain complex. Then
  \begin{eqnarray*}
    \rho^{\IZ}(C_*)  - \rho^{(2)}\bigl(C_*^{(2)};\caln(\{1\})\bigr) 
    & = &
    \sum_{n \ge 0} (-1)^n \cdot 
    \ln\bigl({\det}_{\caln(\{1\})}(\alpha_n)\bigr).
  \end{eqnarray*}
\end{lemma}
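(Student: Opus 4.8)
The plan is to compare three invariants attached to the finite based free $\IZ$-chain complex $C_*$: the integral torsion $\rho^{\IZ}(C_*)$, the $L^2$-torsion $\rho^{(2)}(C_*^{(2)};\caln(\{1\}))$, and the ``correction term'' given by the alternating sum of $\ln\bigl({\det}_{\caln(\{1\})}(\alpha_n)\bigr)$. The key point is that $L^2$-torsion for the trivial group is, up to sign conventions, the classical Reidemeister/Milnor torsion of the based $\IC$-chain complex $\IC\otimes_{\IZ}C_*$ with respect to the Hilbert homology basis coming from $\ker(\Delta_n^{(2)})$, while $\rho^{\IZ}(C_*)$ is (by Remark~\ref{rem:Milnor_torsion}) Milnor's torsion with respect to a chosen \emph{integral} basis of $H_n(C_*)_f$. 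The two differ precisely by the change-of-basis factors on homology, and those factors are exactly the Fuglede-Kadison determinants of the $\alpha_n$.

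First I would fix, for each $n$, a $\IZ$-basis of $C_n$ (given) and a $\IZ$-basis of $H_n(C_*)_f$; these induce $\IQ$-bases (hence $\IR$- and $\IC$-bases) of $\IQ\otimes C_n$ and of $H_n(\IQ\otimes C_*)\cong\IQ\otimes H_n(C_*)_f$. By Remark~\ref{rem:Milnor_torsion}, Milnor's torsion of this based $\IQ$-complex with these homology bases equals $\rho^{\IZ}(C_*)$. Next I would recall the standard multiplicativity/base-change formula for Milnor torsion of a finite based chain complex over a field (or over $\IR$ with the Hilbert-space formalism, as in \cite{Lueck(2002)}): if one replaces the chosen homology basis by another one, the torsion changes by the alternating sum of the logarithms of the absolute values of the determinants of the base-change matrices on $H_n$. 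Applying this with the two bases on each $H_n^{(2)}(C_*^{(2)})$ — namely the image under $\alpha_n$ of the fixed integral basis of $\bigl(H_n(C_*)_f\bigr)^{(2)}$ on the one hand, and the Hilbert-space basis induced from $\ker(\Delta_n^{(2)})$ on the other — gives
\[
\rho^{\IZ}(C_*) - \rho^{(2)}\bigl(C_*^{(2)};\caln(\{1\})\bigr)
= \sum_{n\ge 0}(-1)^n\cdot\ln\bigl({\det}_{\caln(\{1\})}(\alpha_n)\bigr),
\]
since for the trivial group the Fuglede-Kadison determinant of a finite-dimensional operator is exactly $|\det|$ (equivalently, $\exp$ of the integrated log of the spectral density), and the determinant of $\alpha_n$ with respect to these two bases is the relevant base-change determinant.

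The main obstacle is bookkeeping of conventions: one must pin down that $\rho^{(2)}(C_*^{(2)};\caln(\{1\}))$ in the sense of \cite{Lueck(2002)} is computed using precisely the Hilbert structure on $H_n^{(2)}(C_*^{(2)})$ described in Subsection~\ref{subsec:Relating_L2-torsion_and_Z-torsion} (the one making $\ker(\Delta_n^{(2)})\to H_n^{(2)}$ isometric), and that the sign $(-1)^n$ and the direction of the difference $\rho^{\IZ}-\rho^{(2)}$ match. Everything else is a routine instance of the base-change behaviour of torsion together with the identification ${\det}_{\caln(\{1\})}=|\det|$ on finite-dimensional spaces; in particular, the independence of the left-hand side from the chosen $\IZ$-bases (of $C_n$, up to sign, and of $H_n(C_*)_f$) is automatic because integral base-change matrices have determinant $\pm 1$, which is consistent with the already-noted independence of each $\ln\bigl({\det}_{\caln(\{1\})}(\alpha_n)\bigr)$ from the integral homology basis.
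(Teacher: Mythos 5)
Your route is genuinely different from the paper's, and it can be made to work, but as written it has one load-bearing assertion that is not proved or precisely cited and that in fact carries all of the content of the lemma. Granting Remark~\ref{rem:Milnor_torsion} (Milnor's identification of $\rho^{\IZ}(C_*)$ with the torsion of $\IQ\otimes_{\IZ}C_*$ relative to integral homology bases) and the standard base-change behaviour of torsion under a change of homology bases, your ``key point'' --- that $\rho^{(2)}\bigl(C_*^{(2)};\caln(\{1\})\bigr)$ is, up to sign, the logarithm of the Milnor torsion of the same based complex taken with respect to orthonormal harmonic homology bases --- is \emph{equivalent} to the statement of Lemma~\ref{lem:rho(2)-rhoZ}. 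So at the decisive step you are quoting a reformulation of the lemma rather than deducing it. The fact is true and standard, and it does admit a short proof (split $C_n^{(2)} = \ker\bigl(\Delta_n^{(2)}\bigr) \oplus \im\bigl(c_{n+1}^{(2)}\bigr) \oplus \im\bigl((c_n^{(2)})^*\bigr)$ orthogonally and compute both sides), but you must either supply that argument or a precise reference; the remaining bookkeeping in your proposal (${\det}_{\caln(\{1\})}=|\det|$ in finite dimensions, independence of the integral bases because integral base changes have determinant $\pm 1$, matching the Hilbert structure on $H_n^{(2)}$ with the one used in~\eqref{iso_alpha}) is fine.

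For comparison, the paper avoids both Milnor's theorem and the harmonic-basis identification. It constructs a normal-form complex $\bigoplus_{n}\bigl(n\bigl[H_n(C_*)_f\bigr]\oplus\bigoplus_i F^{n,i}_*\bigr)$, where the $F^{n,i}_*$ are elementary two-term complexes realizing the torsion of $H_n(C_*)$, together with a $\IZ$-chain homotopy equivalence $f_*$ to $C_*$ inducing the chosen identifications on homology. Since the Whitehead torsion of $f_*$ is $\pm 1$, its $L^2$-torsion vanishes, and the anomaly formula of~\cite[Theorem~3.35~(5) on page~142]{Lueck(2002)} produces exactly the correction term $\sum_n(-1)^n\ln\bigl({\det}_{\caln(\{1\})}(H_n(f_*^{(2)}))\bigr)=\sum_n(-1)^n\ln\bigl({\det}_{\caln(\{1\})}(\alpha_n)\bigr)$; the proof then ends with the one-line computation $\rho^{(2)}=\rho^{\IZ}$ for each $F^{n,i}_*$. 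Your approach buys a conceptually transparent statement (``two torsions, two homology bases, base change''), while the paper's buys a self-contained proof from quotable results; once you fill in your key point by the Hodge-splitting computation (or by an argument like the paper's), the two proofs are of comparable length.
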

\begin{proof}
  Since $\IZ$ is a principal ideal domain and $C_*$ is a finite free $\IZ$-chain
  complex, $H_n(C_*)_f$ is finitely generated free and $\tors(H_n(C_*))$ is a
  finite product of finite cyclic groups.  Denote by $n\bigl[H_n(C_*)_f\bigr]$
  the $\IZ$-chain complex which is concentrated in dimension $n$ and has as
  $n$th chain module $H_n(C_*)_f$. In the sequel we equip $H_n(C_*)_f$ with
  some $\IZ$-basis.  For every $n \ge 0$, one can find finite based free
  $\IZ$-chain complexes $F^{n,i}_*$ for $i = 1,2, \ldots, r_n$ such that
  $F^{n,i}_*$ is concentrated in dimensions $(n+1)$ and $n$, its $(n+1)$th
  differential is given by multiplication $\IZ \xrightarrow{l_{n,i}} \IZ$ with
  some integer $l_{n,i} \ge 2$ and there is an isomorphism
  \[
  \eta_n \colon H_n(C_*)_f \oplus \bigoplus_{i =1}^{r_n} H_n(F_*^{n,i})
  \xrightarrow{\cong} H_n(C_*)
  \]
  of abelian group such that the composite of the inverse of $\eta_n$ and the
  canonical projection $H_n(C_*)_f \oplus \bigoplus_{i =1}^{r_n} H_n(F_*^{p,i})
  \to H_n(C_*)_f$ agrees with the canonical projection $H_n(C_*) \to
  H_n(C_*)_f$.  Using the exact sequence of $\IZ$-modules $C_{n+1}
  \xrightarrow{c_{n+1}} \ker(c_n) \to H_n(C_*) \to 0$ one constructs a
  $\IZ$-chain map
  \[
  f_* \colon \bigoplus_{n \ge 0} \left(n\bigl[H_n(C_*)_f\bigr] \oplus 
  \bigoplus_{i=1}^{r_n} F^{n,i}_* \right) \to C_*
  \]
  such that $H_n(f_*)$ is the isomorphism $\eta_n$. Since the source and the
  target of $f_*$ are free $\IZ$-chain complexes, $f_*$ is a $\IZ$-chain
  homotopy equivalence.  The $L^2$-torsion $t^{(2)}\bigl(f_*^{(2)}\bigr)$ of
  $f_*^{(2)}$ is defined in~\cite[Definition~3.31 on page~141]{Lueck(2002)}. We
  conclude from~\cite[Theorem~3.35~(5) on page~142]{Lueck(2002)})
  \begin{multline}
    t^{(2)}\bigl(f_*^{(2)}\bigr) = \rho^{(2)}\bigl(C_*^{(2)}\bigr) -
    \rho^{(2)}\left(\bigoplus_{n  \ge 0} \left(n\bigl[H_n(C_*)_f\bigr] \oplus 
      \bigoplus_{i=1}^{r_n} F^{n,i}_* \right)\right) \\+ \sum_{n \ge 0} (-1)^n \cdot
    \ln\left(\det\bigl(H_n(f^{(2)}_*)\bigr)\right).
    \label{lem:rho(2)-rhoZ:difference_of_rho-s}
  \end{multline}
  Since $f_*$ is a $\IZ$-chain homotopy equivalence and hence its Whitehead torsion is an
  element in $\{\pm 1\}$, we get
  \begin{eqnarray}
    & t^{(2)}\bigl(f_*^{(2)}\bigr)  =  \ln(|\pm 1|) = 0. &
    \label{lem:rho(2)-rhoZ:t(2)(f_ast(2))_is_zero}
  \end{eqnarray}
  We obtain an equality of maps of finitely generated Hilbert
  $\caln(\{1\})$-modules
  \begin{eqnarray}
    H_n\bigl(f^{(2)}_*\bigr) & = &\alpha_n
    \label{lem:rho(2)-rhoZ:alpha_n}.
  \end{eqnarray}
  One easily checks
  \begin{multline}
    \quad \rho^{(2)}\left(\bigoplus_{n \ge 0} 
   \left(n\bigl[H_n(C_*)_f\bigr] ^{(2)}\oplus \bigoplus_{i=1}^{r_n} \bigl(F^{n,i}_*\bigr)^{(2)}\right)
    \right) 
    \\ 
    = \sum_{n \ge 0} \sum_{i=1}^{r_n}
    \rho^{(2)}\left(\bigl(F^{n,i}_*\bigr)^{(2)};\caln(\{1\}) \right);
    \label{lem:rho(2)-rhoZ:rho(2)-s}
  \end{multline}
  \begin{eqnarray}
    \rho^{\IZ}(C_*)
    & = & 
    \sum_{n \ge 0}
    \sum_{i=1}^{r_n} \rho^{\IZ}\bigl(F^{n,i}_*\bigr). 
    \label{lem:rho(2)-rhoZ:rho(Z)-s}
  \end{eqnarray}
  Because of~\eqref{lem:rho(2)-rhoZ:difference_of_rho-s},
  \eqref{lem:rho(2)-rhoZ:t(2)(f_ast(2))_is_zero},~%
  \eqref{lem:rho(2)-rhoZ:alpha_n},~\eqref{lem:rho(2)-rhoZ:rho(2)-s}
  and~\eqref{lem:rho(2)-rhoZ:rho(Z)-s} it suffices to show
  \begin{eqnarray*}
    \rho^{(2)}\left(\bigl(F^{n,i}_*\bigr)^{(2)}\right)  
    & = & 
    \rho^{\IZ}\bigl(F^{n,i}_*\bigr). 
    \end{eqnarray*}
  This is done by the following calculation.
  \begin{eqnarray*}
    \rho^{(2)}\bigl((F^{n,i}_*)^{(2)}\bigr) 
    & = & 
    (-1)^{n} \cdot \ln\bigl({\det}_{\caln(\{1\})}\bigl(l_{n,i} \colon \IC \to \IC\bigr)\bigr) 
    \\
    & = & 
    (-1)^{n} \cdot \ln(l_{i,n}) 
    \\
    & = &
    (-1)^{n} \cdot \ln\left(\bigl|\tors\bigl(H_n(F^{n,i}_*)\bigr)\bigr|\right)
    \\
    & = & 
    \rho^{\IZ}\bigl(F^{n,i}_*\bigr).
  \end{eqnarray*}
  This finishes the proof of Lemma~\ref{lem:rho(2)-rhoZ}.
\end{proof}

\begin{notation}\label{not:closure_of_submodule}
  Let $A$ be a finitely generated free abelian group and let $B \subseteq A$ be a
  subgroup.  Define the \emph{closure} of $B$ in $A$ to be the subgroup
  \begin{eqnarray*}
    \overline{B}  
    & = & \{x \in A \mid n\cdot x \in B \; \text{for some non-zero integer}\; n\}.
  \end{eqnarray*}
\end{notation}

\begin{lemma} \label{lem:Fuglede-Kadison_and_tors_for_G_is_trivial} Let $u
  \colon \IZ^r \to \IZ^s$ be a homomorphism of abelian groups. Let $j_k \colon
  \ker(u) \to \IZ^r$ be the inclusion and $\pr_c \colon \IZ^s \to \coker(u)_f$
  be the canonical projection.  Choose $\IZ$-basis for $\ker(u)$ and
  $\coker(u)_f$.

  Then ${\det}_{\caln(\{1\})}\bigl(j_k^{(2)}\bigr)$ and
  ${\det}_{\caln(\{1\})}\bigl(\pr_c^{(2)}\bigr)$ are independent of the choice
  of the $\IZ$-basis for $\ker(u)$ and $\coker(u)_f$, and we have
  \begin{eqnarray*}
    {\det}_{\caln(\{1\})}(u^{(2)}) & = &
    {\det}_{\caln(\{1\})}\bigl(j_k^{(2)}\bigr) \cdot
    \bigl|\tors(\coker(u))\bigr|
    \cdot {\det}_{\caln(\{1\})}\bigl(\pr_c^{(2)}\bigr),
  \end{eqnarray*}
  and
  \[
  \begin{array}{lclcl}
    1 & \le & {\det}_{\caln(\{1\})}(j_k^{(2)}) & \le & {\det}_{\caln(\{1\})}(u^{(2)});
    \\
    1 & \le &  {\det}_{\caln(\{1\})}\bigl(\pr_c^{(2)}\bigr) 
    & \le & {\det}_{\caln(\{1\})}(u^{(2)});
    \\
    1 & \le & \bigl|\tors(\coker(u))\bigr| & \le & {\det}_{\caln(\{1\})}(u^{(2)}).
  \end{array}
  \]
\end{lemma}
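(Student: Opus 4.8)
The plan is to identify each of the three Fuglede--Kadison determinants in the product formula with the covolume of a lattice naturally attached to $u$, and then to deduce the formula and the estimates from elementary lattice geometry. Write $m:=\bigl|\tors(\coker(u))\bigr|$. Since $\IZ$ is a principal ideal domain, $\ker(u)$ and $\coker(u)_f$ are finitely generated free, $\ker(u)$ is a direct summand of $\IZ^r$ (as $\IZ^r/\ker(u)\cong\im(u)\subseteq\IZ^s$ is free), the closure $\overline{\im(u)}$ is a direct summand of $\IZ^s$ (as $\IZ^s/\overline{\im(u)}=\coker(u)_f$ is free), and $\overline{\im(u)}/\im(u)=\tors(\coker(u))$ has order $m$. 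I would equip $\IR^r=\IR\otimes_\IZ\IZ^r$ and $\IR^s$ with the standard inner products, so that $\IZ^r$ and $\IZ^s$ become self-dual unimodular lattices; for a sublattice $L$ of one of them let $\operatorname{covol}(L)$ denote its covolume in the real span of $L$ with the induced metric.

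I would use the following standard facts. First, $\det_{\caln(\{1\})}(f)$ is the product of the non-zero singular values of $f$; consequently, if $f$ is injective and maps the lattice spanned by the chosen orthonormal basis of its source onto a lattice $\Lambda$, then $\det_{\caln(\{1\})}(f)=\operatorname{covol}(\Lambda)$, and in general $\det_{\caln(\{1\})}(f)=\det_{\caln(\{1\})}(f^*)$ while the non-zero singular values of $f$ coincide with the singular values of $f|_{\ker(f)^{\perp}}$. Second, if $M$ is a direct-summand sublattice of the self-dual lattice $\Lambda$, then orthogonal projection of $\Lambda$ onto the real orthogonal complement of $M$ has covolume $1/\operatorname{covol}(M)$, and the orthogonal complement $M^{\circ}:=\{x\in\Lambda\mid\langle x,M\rangle=0\}$ of $M$ in $\Lambda$ satisfies $\operatorname{covol}(M^{\circ})=\operatorname{covol}(M)$ (it is the dual lattice of that projection). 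Third, for lattices $L'\subseteq L$ of equal rank, $\operatorname{covol}(L')=[L:L']\cdot\operatorname{covol}(L)$. The asserted independence of the two displayed determinants from the chosen $\IZ$-bases is then immediate, since the lattices produced below are intrinsic to $u$ and a change of $\IZ$-basis leaves the relevant covolume unchanged.

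Next I would identify the determinants. $(1)$ The injection $j_k^{(2)}$ maps the chosen orthonormal lattice of its source onto $\ker(u)\subseteq\IZ^r$, so $\det_{\caln(\{1\})}\bigl(j_k^{(2)}\bigr)=\operatorname{covol}(\ker u)$. $(2)$ Passing to the adjoint and identifying $\IZ^s$ with its $\IZ$-dual, one checks that $\bigl(\pr_c^{(2)}\bigr)^{*}$ is injective and maps the chosen orthonormal lattice of its source onto the annihilator $A:=\{x\in\IZ^s\mid\langle x,v\rangle=0\text{ for all }v\in\overline{\im(u)}\}$; hence $\det_{\caln(\{1\})}\bigl(\pr_c^{(2)}\bigr)=\operatorname{covol}(A)=\operatorname{covol}\bigl(\overline{\im(u)}\bigr)$, the last equality by the second fact. $(3)$ Since $u^{(2)}$ vanishes on $\ker(u)\otimes_\IZ\IC$, its non-zero singular values are the singular values of the linear isomorphism $N\xrightarrow{\cong}\im(u)\otimes_\IZ\IR$ induced by $u$, where $N\subseteq\IR^r$ is the orthogonal complement of $\ker(u)\otimes_\IZ\IR$; this isomorphism sends the orthogonal projection of $\IZ^r$ onto $N$, which has covolume $1/\operatorname{covol}(\ker u)$, onto $\im(u)$, so $\det_{\caln(\{1\})}(u^{(2)})=\operatorname{covol}(\ker u)\cdot\operatorname{covol}(\im u)$.

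Combining, $\operatorname{covol}(\im u)=m\cdot\operatorname{covol}\bigl(\overline{\im(u)}\bigr)$ by the third fact, so
\[
\det_{\caln(\{1\})}(u^{(2)})
=\operatorname{covol}(\ker u)\cdot m\cdot\operatorname{covol}\bigl(\overline{\im(u)}\bigr)
=\det_{\caln(\{1\})}\bigl(j_k^{(2)}\bigr)\cdot\bigl|\tors(\coker(u))\bigr|\cdot\det_{\caln(\{1\})}\bigl(\pr_c^{(2)}\bigr),
\]
which is the product formula. For the estimates, $\ker(u)$ and $\overline{\im(u)}$ are lattices generated by integer vectors, so their Gram matrices are positive definite symmetric integer matrices of determinant $\ge 1$; thus $\det_{\caln(\{1\})}\bigl(j_k^{(2)}\bigr)=\operatorname{covol}(\ker u)\ge 1$ and $\det_{\caln(\{1\})}\bigl(\pr_c^{(2)}\bigr)=\operatorname{covol}\bigl(\overline{\im(u)}\bigr)\ge 1$, while $\bigl|\tors(\coker(u))\bigr|\ge 1$ trivially; since the product of any two of these three numbers is $\ge 1$, each of them is at most $\det_{\caln(\{1\})}(u^{(2)})$, which yields the remaining inequalities. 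The point requiring the most care is step $(2)$: one must pass to the adjoint, recognize its image as the annihilator of $\overline{\im(u)}$, and invoke the standard but mildly delicate equality of covolumes of a direct-summand sublattice of a unimodular lattice and its orthogonal complement, which is also what powers step $(3)$. An alternative is to apply Lemma~\ref{lem:rho(2)-rhoZ} to the two-term based free $\IZ$-chain complex $0\to\IZ^r\xrightarrow{u}\IZ^s\to 0$ in degrees $1$ and $0$, whose integral torsion is $\ln m$ and whose $L^2$-torsion is $\ln\det_{\caln(\{1\})}(u^{(2)})$, and whose maps $\alpha_1,\alpha_0$ from~\eqref{iso_alpha} are, up to the isometries defining the $L^2$-homology Hilbert structures, $j_k^{(2)}$ and the inverse of the corestriction of $\pr_c^{(2)}$ to $\ker\bigl(\pr_c^{(2)}\bigr)^{\perp}$; exponentiating yields the formula, at the cost of tracking sign and normalization conventions.
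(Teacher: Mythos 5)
Your proposal is correct, but it proves the lemma by a genuinely different route than the paper. The paper factors $u$ as $\IZ^r \xrightarrow{\pr_k} \IZ^r/\ker(u) \xrightarrow{\overline{u}} \overline{\im(u)} \xrightarrow{j_i} \IZ^s$, invokes multiplicativity of the Fuglede--Kadison determinant (Theorem~3.14~(1) in L\"uck's book) together with the Determinant Conjecture for the trivial group to get the lower bounds and the basis independence, and then reduces the two remaining identities ${\det}_{\caln(\{1\})}(j_i^{(2)}) = {\det}_{\caln(\{1\})}(\pr_c^{(2)})$ and ${\det}_{\caln(\{1\})}(\pr_k^{(2)}) = {\det}_{\caln(\{1\})}(j_k^{(2)})$ to Lemma~\ref{lem:rho(2)-rhoZ} applied to the two acyclic complexes $\ker(u) \to \IZ^r \to \IZ^r/\ker(u)$ and $\overline{\im(u)} \to \IZ^s \to \coker(u)_f$; the middle factor contributes $\bigl|\tors(\coker(u))\bigr|$ as the absolute value of an ordinary determinant. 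You instead identify all three determinants with lattice covolumes: ${\det}_{\caln(\{1\})}(j_k^{(2)})$ as the covolume of $\ker(u)$, ${\det}_{\caln(\{1\})}(\pr_c^{(2)})$ (via the adjoint and the annihilator of $\overline{\im(u)}$, using that a primitive sublattice of a unimodular lattice and its orthogonal complement have equal covolume) as the covolume of $\overline{\im(u)}$, and ${\det}_{\caln(\{1\})}(u^{(2)})$ as the product of the covolumes of $\ker(u)$ and $\im(u)$; the index formula $[\overline{\im(u)}:\im(u)] = \bigl|\tors(\coker(u))\bigr|$ then yields the product formula, and the inequalities follow from integrality of Gram determinants rather than from the Determinant Conjecture. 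All the lattice-geometric facts you use are standard and your verifications go through, including the primitivity of $\ker(u)$ and $\overline{\im(u)}$, so there is no gap; your approach is more self-contained and makes the geometric content visible, while the paper's is shorter given the formal determinant toolkit it has already cited. Your closing ``alternative'' is essentially the paper's own mechanism, except that you apply Lemma~\ref{lem:rho(2)-rhoZ} once to the single complex $\IZ^r \xrightarrow{u} \IZ^s$ (where $\alpha_1$ and $\alpha_0$ become $j_k^{(2)}$ and the inverse of the corestriction of $\pr_c^{(2)}$) instead of twice to auxiliary acyclic complexes; as you note, only sign and normalization conventions need care there.
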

\begin{proof}
  Because of~\cite[Lemma~13.12 on
  page~459]{Lueck(2002)} the trivial group satisfies the Determinant
  Conjecture. This implies $1 \le {\det}_{\caln(\{1\})}(u^{(2)})$
  and $1 \le {\det}_{\caln(\{1\})}\bigl(\pr_c^{(2)}\bigr)$ and that for any
  isomorphism $j \colon \IZ^l \to \IZ^l$ we have $\det_{\caln(\{1\})}(j^{(2)}) =
  1$. The latter together with~\cite[Theorem~3.14~(1) on page~128]{Lueck(2002)}
  implies that the $\IZ$-basis for $\ker(u)$ and $\coker(u)_f$ do not matter and
  that it suffices to show the equation
  \begin{eqnarray*}
    {\det}_{\caln(\{1\})}(u^{(2)}) & = &
    {\det}_{\caln(\{1\})}\bigl(j_k^{(2)}\bigr) \cdot
    \bigl|\tors(\coker(u))\bigr|
    \cdot {\det}_{\caln(\{1\})}\bigl(\pr_c^{(2)}\bigr).
  \end{eqnarray*}

  Let $\overline{u}$ be the map of finitely generated free abelian groups
  $\IZ^r/\ker(f) \to \overline{\im(u)}$ induced by $u$.  Equip $\IZ^r/\ker(f)$
  and $\overline{\im(u)}$ with $\IZ$-basis.  Let $\pr_k \colon \IZ^r \to
  \IZ^r/\ker(u)$ be the canonical projection and $j_i \colon \overline{\im(u)}
  \to \IZ^s$ be the inclusion.  We conclude from~\cite[Theorem~3.14~(1) on
  page~128]{Lueck(2002)}
  \[ {\det}_{\caln(\{1\})}(u^{(2)}) = {\det}_{\caln(\{1\})}\bigl(j_i^{(2)}\bigr)
  \cdot {\det}_{\caln(\{1\})}\bigl(\overline{u}^{(2)}\bigr) \cdot
  {\det}_{\caln(\{1\})}\bigl(\pr_k^{(2)}\bigr).
  \]
  Hence it remains to show
  \begin{eqnarray*} {\det}_{\caln(\{1\})}\bigl(j_i^{(2)}\bigr) 
    & = &
    {\det}_{\caln(\{1\})}\bigl(\pr_c^{(2)}\bigr);
    \\
    {\det}_{\caln(\{1\})}\bigl(\pr_k^{(2)}\bigr) 
    & = &
    {\det}_{\caln(\{1\})}\bigl(j_k^{(2)}\bigr).
  \end{eqnarray*}
  This follows from Lemma~\ref{lem:rho(2)-rhoZ} applied to the $2$-dimensional
  acyclic based free finite $\IZ$-chain complexes $ \ker(u) \xrightarrow{j_k}
  \IZ^r \xrightarrow{\pr_k} \IZ^r/\ker(u) $ and $ \overline{\im(u)}
  \xrightarrow{j_i} \IZ^1 \xrightarrow{\pr_c} \coker(u)_f.  $ This finishes the
  proof of Lemma~\ref{lem:Fuglede-Kadison_and_tors_for_G_is_trivial}.
\end{proof}

\subsection{Minimal numbers of generators}
\label{subsec:Minimal_numbers_of_generators}

\begin{definition}[Minimal numbers of generators]
  \label{def:minimal_numbers_of_generators}
  Let $G$ be a finitely generated group. Denote by $\mg(G) \in \IN$ the
  \emph{minimal numbers of generators} of $G$, where we put $\mg(\{1\}) = 0$.
\end{definition}

If $M$ is an abelian group, then $\mg(M)$ is the minimum over all natural
numbers $n$ for which there exists an epimorphism of abelian groups $\IZ^n \to
M$.

\begin{lemma} \label{lem:properties_of_mg}

  \begin{enumerate}

  \item \label{lem:properties_of_mg:structure_d} Let $r \ge 0$, $s \ge 0$ and
    $d_1 \ge 2$, $d_2 \ge 2, \ldots, d_s \ge 2$ be integers with $d_1 | d_2 |
    d_3 | \cdots | d_s$ and $M \cong \IZ^r \oplus \bigoplus_{i = 1}^s
    \IZ/d_i$. Then
    \[
    \mg(M) = r + s;
    \]

  \item \label{lem:properties_of_mg:structure_nrimes} Let $M$ be a finitely
    generated abelian group. For any prime $p$ we can write the $p$-Sylow
    subgroup $\tors(M)_p$ of the finite abelian group $\tors(M)$ as a direct sum
    \[\tors(M)_p \cong \bigoplus_{i = 1}^{s_p} \IZ/p^{k_i}\]
    for integers $k_i \ge 1$ and $s_p \ge 0$. Then
    \[
    \mg(M) = \dim_{\IQ}(\IQ \otimes_{\IZ} M)+ \max\{s_p \mid p \;
    \text{prime}\}.
    \]
    In particular
    \[
    \mg(M) = \mg(M/\tors(M)) + \mg(\tors(M));
    \]

  \item \label{lem:properties_of_mg:estimate} If $K$ is any field, we have
    \[
    \dim_{\IQ}(\IQ\otimes_{\IZ} M) \le \dim_{K}(K \otimes_{\IZ} M) \le \mg(M) \le \dim_{\IQ}(\IQ\otimes_{\IZ} M) +
    \frac{\ln\bigl(|\tors(M)|\bigr)}{\ln(2)};
  \]

  \item \label{lem:properties_of_mg:subadditivity} If $0 \to M_0 \to M_1 \to M_2
    \to 0$ is an exact sequence of finitely generated $\IZ$-modules, then
    \[
    \mg(M_1) \le \mg(M_0) + \mg(M_2).
    \]
    If $M$ is a submodule of the finitely generated $\IZ$-module $M$, then
    \[
    \mg(M) \le \mg(N);
    \]
    If $N$ is a quotient module of the finitely generated $\IZ$-module $M$, then
    \[
    \mg(N) \le \mg(M).
    \]

  \end{enumerate}
\end{lemma}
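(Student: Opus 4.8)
The plan is to derive everything from the structure theorem for finitely generated abelian groups, proving the four assertions essentially in the stated order: I will deduce~\eqref{lem:properties_of_mg:structure_nrimes} from~\eqref{lem:properties_of_mg:structure_d}, then~\eqref{lem:properties_of_mg:estimate} from~\eqref{lem:properties_of_mg:structure_nrimes}, while~\eqref{lem:properties_of_mg:subadditivity} is handled independently.

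For~\eqref{lem:properties_of_mg:structure_d}, the inequality $\mg(M) \le r+s$ is immediate from the obvious generating set. For the reverse inequality I would choose a prime $p$ dividing $d_1$; since $d_1 \mid d_2 \mid \cdots \mid d_s$, it divides every $d_i$, hence $\IZ/d_i \otimes_{\IZ} \IF_p \cong \IF_p$ and $M/pM \cong \IF_p^{r+s}$. Any generating set of $M$ of cardinality $k$ maps to a generating set of the $\IF_p$-vector space $M/pM$, so $r+s = \dim_{\IF_p}(M/pM) \le k$; therefore $\mg(M) \ge r+s$.

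For~\eqref{lem:properties_of_mg:structure_nrimes}, write $M \cong \IZ^r \oplus \tors(M)$ with $r = \dim_{\IQ}(\IQ\otimes_{\IZ} M)$ and put $\tors(M)$ into invariant-factor form $\bigoplus_{j=1}^t \IZ/d_j$ with $d_1 \mid \cdots \mid d_t$ and $d_j \ge 2$. The classical passage between the primary and invariant-factor decompositions gives $t = \max\{s_p \mid p \text{ prime}\}$, and then~\eqref{lem:properties_of_mg:structure_d} yields $\mg(M) = r+t$, which is the asserted formula. Applying this with $M$ replaced by $\tors(M)$ gives $\mg(\tors(M)) = t$, and $\mg(M/\tors(M)) = \mg(\IZ^r) = r$; adding these proves the ``in particular'' clause. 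For~\eqref{lem:properties_of_mg:estimate}: tensoring a surjection $\IZ^{\mg(M)} \twoheadrightarrow M$ with $K$ gives $\dim_K(K\otimes_{\IZ} M) \le \mg(M)$; the decomposition $K\otimes_{\IZ} M \cong K^r \oplus (K\otimes_{\IZ}\tors(M))$ shows $\dim_K(K\otimes_{\IZ} M) \ge r = \dim_{\IQ}(\IQ\otimes_{\IZ} M)$; and $|\tors(M)| = \prod_p |\tors(M)_p| \ge \prod_p 2^{s_p} \ge 2^{\max_p s_p}$, so $\max_p s_p \le \ln(|\tors(M)|)/\ln(2)$, which combined with~\eqref{lem:properties_of_mg:structure_nrimes} gives the final inequality.

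For~\eqref{lem:properties_of_mg:subadditivity}, given $0 \to M_0 \to M_1 \to M_2 \to 0$ I would take $\mg(M_0)$ generators of $M_0$ together with lifts to $M_1$ of $\mg(M_2)$ generators of $M_2$; a short diagram chase shows these generate $M_1$. If $N \subseteq M$ and $\pi \colon \IZ^{\mg(M)} \twoheadrightarrow M$, then $\pi^{-1}(N)$ is a submodule of a free $\IZ$-module of rank $\mg(M)$, hence free of rank $\le \mg(M)$ because $\IZ$ is a principal ideal domain, and it surjects onto $N$, so $\mg(N) \le \mg(M)$; the quotient case follows at once by composing surjections. The only step needing genuine care is the bookkeeping in~\eqref{lem:properties_of_mg:structure_nrimes} relating the two normal forms of a finite abelian group together with the (standard but used) fact that submodules of free $\IZ$-modules are free of no larger rank; everything else is routine.
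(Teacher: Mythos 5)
Your proof is correct, and for the one assertion that carries real content it takes a genuinely different route from the paper. For the lower bound $\mg(M) \ge r+s$ in assertion~\eqref{lem:properties_of_mg:structure_d} you reduce modulo a prime $p$ dividing $d_1$ and count $\IF_p$-dimensions: $M/pM \cong \IF_p^{r+s}$, and any $k$ generators of $M$ span this vector space, so $k \ge r+s$. The paper instead takes an arbitrary surjection $\IZ^n \to M$, notes its kernel is free of rank $m \le n$, puts the inclusion $\IZ^m \to \IZ^n$ into Smith normal form with invariant factors $c_1 \mid \cdots \mid c_m$, and then invokes the uniqueness part of the structure theorem to read off $n-m = r$ and $m-t = s$. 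Your argument is more elementary (no normal form, no uniqueness statement needed beyond the $\IF_p$-dimension of $M/pM$), at the price of the trivial degenerate case $s=0$, where there is no $d_1$ and one simply picks any prime $p$. For assertions~\eqref{lem:properties_of_mg:structure_nrimes} and~\eqref{lem:properties_of_mg:estimate} the paper merely says they follow from~\eqref{lem:properties_of_mg:structure_d}; your write-up supplies the standard bookkeeping (invariant-factor versus primary decomposition, $t = \max_p s_p$, and $|\tors(M)| \ge 2^{\max_p s_p}$), which is exactly the intended derivation. Assertion~\eqref{lem:properties_of_mg:subadditivity} you prove the same way the paper does: lifting generators for the short exact sequence, and for a submodule $N \subseteq M$ pulling it back under a surjection $\IZ^{\mg(M)} \to M$ and using that a submodule of a free $\IZ$-module of rank $n$ is free of rank $\le n$ (note the statement of this part contains an obvious typo in the letters $M$, $N$, which both you and the paper silently correct).
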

\begin{proof}~\eqref{lem:properties_of_mg:structure_d} Consider any epimorphism
  $f \colon \IZ^n \to M$. We have to show $n \ge r+s$.  The kernel of $f$ is a
  finitely generated free $\IZ$-module of rank $m \le n$.  Hence we obtain an
  exact sequence $0 \to \IZ^m \xrightarrow{j} \IZ^n \xrightarrow{f} M \to 0$.
  By composing $i$ with appropriate $\IZ$-isomorphisms from the left and the
  right we can arrange that there are integers $c_1 \ge 1$, $c_2 \ge 1, \ldots,
  c_m \ge 1$ such that $c_1 | c_2 | c_3 | \ldots | c_m$, and $j$ sends the
  element $e_i$, whose entries are all zero except the $i$th entry which is
  $1$, to $c_i \cdot e_i$ for $i = 1,2 \ldots, m$. Let $t \in \{0,1,2, \ldots , m\}$ 
  be the integer for which $c_i = 1$ for $i \le t$. Then
  \[
  M \cong \IZ^{n-m} \oplus \bigoplus_{i = t+1} ^m \IZ/c_i.
  \]
  From the structure theorem for abelian groups we conclude $n-m= r$ and 
  $m-t = s$. This implies
  \[
  r + s = n-m + m- t \le n.
  \]%
  \eqref{lem:properties_of_mg:structure_nrimes}
  and~\eqref{lem:properties_of_mg:estimate} These follow from
  assertion~\eqref{lem:properties_of_mg:structure_d}.
  \\[1mm]~\eqref{lem:properties_of_mg:subadditivity} Let $0 \to M_0
  \xrightarrow{j} M_1 \xrightarrow{q} M_2 \to 0$ be an exact sequence of
  finitely generated $\IZ$-modules. Choose epimorphisms $f_i \colon
  \IZ^{\mg(M_i)} \to M_i$ for $i = 0,2$.  Choose a homomorphism $\widetilde{f_2}
  \colon \IZ^{\mg(M_2)} \to M_2$ with $q \circ \widetilde{f_2} = f_2$.  Define
  the $\IZ$-homomorphism $f_1 \colon \IZ^{\mg(M_0) + \mg(M_2)} = \IZ^{\mg(M_0)}
  \oplus \IZ^{\mg(M_2)} \to M_1$ by $(j \circ f_0) \oplus \widetilde{f_2}$. One
  easily checks that $f_1$ is surjective. Hence
  \[\mg(M_1) \le \mg(M_0) + \mg(M_2).
  \]
  Let $M$ be a submodule of the finitely generated $\IZ$-module $N$. Choose an
  epimorphism $f \colon \IZ^{\mg(N)} \to N$. Then $f$ induces an epimorphism
  $f^{-1}(M) \to M$ and $f^{-1}(M)$ is isomorphic to $\IZ^m$ for some integer $m
  \le \mg(N)$. Hence $\mg(M) \le m \le \mg(N)$. The claim for quotient modules
  is obvious.
\end{proof}

\begin{remark} \label{rem:mg}
Notice that the minimal number of generators  is not additive under exact sequences, actually it is not even
additive under direct sums.  Namely, if $p$ and $q$ are different prime numbers,
then $\mg(\IZ/p \oplus \IZ/q) = \mg(\IZ/p) = \mg(\IZ/q) = 1$.  Moreover, for
non-abelian groups it is not true that $\mg(H) \le \mg(G)$ holds for a subgroup
$H \subseteq G$, e.g., this fails for $G$ the free group of two generators and
$H$ any proper subgroup of finite index.
\end{remark}

\begin{remark}\label{rem:tro_more_difficult}
The main reason why Betti numbers and the number of minimal generators are easier to handle
than the order of the torsion in homology is that the minimal number of generators satisfies
assertion~\eqref{lem:properties_of_mg:subadditivity} of Lemma~\ref{lem:properties_of_mg}
and the dimension of $K$-vector spaces satisfies the analogous statement.
This is not true for the order of the torsion in a finitely generated abelian group. Passing to the quotient
of a finitely generated abelian group may increase the order of the torsion submodule
as the example $\IZ \to \IZ/n$ shows.
\end{remark}

\begin{lemma} \label{lemma:order_of_group_homology} Let $G$ be a finite
  group. Let $M$ be a $\IZ G$-module which is finitely generated as abelian
  group.  Consider $n \ge 1$. Let $d_n$ be an integer such that there is a free $\IZ
  G$-resolution $F_*$ of the trivial $\IZ G$-module $\IZ$ satisfying $\dim_{\IZ
    G}(F_n) \le d_n$. ($F_*$ may depend on $n$.)

  Then $H_n(G;M)$ is annihilated by multiplication with $|G|$ and we get
  \begin{eqnarray*}
    \mg(H_n(G;M)) & \le &  d_n \cdot \mg(M);
    \\
    |H_n(G;M)| & \le & |G|^{d_n\cdot \mg(M)}.
  \end{eqnarray*}
\end{lemma}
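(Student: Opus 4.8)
The plan is to prove Lemma~\ref{lemma:order_of_group_homology} by explicitly computing $H_n(G;M)$ from a small free resolution and using the multiplicativity/subadditivity properties of $\mg$ established in Lemma~\ref{lem:properties_of_mg}. First I would recall that $H_n(G;M)$ is computed as a subquotient of $F_n \otimes_{\IZ G} M$, where $F_*$ is the chosen free $\IZ G$-resolution of $\IZ$ with $\dim_{\IZ G}(F_n) \le d_n$. Since $F_n$ is free of $\IZ G$-rank at most $d_n$, the abelian group $F_n \otimes_{\IZ G} M$ is a quotient of $(\IZ G)^{d_n} \otimes_{\IZ G} M \cong M^{d_n}$, hence is finitely generated as an abelian group with $\mg(F_n \otimes_{\IZ G} M) \le d_n \cdot \mg(M)$ by Lemma~\ref{lem:properties_of_mg}~\eqref{lem:properties_of_mg:subadditivity} (subadditivity under direct sums, applied $d_n$ times, together with the fact that $\mg$ does not increase under quotients). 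Then $H_n(G;M)$ is a subquotient of $F_n \otimes_{\IZ G} M$ (it is a quotient of a submodule, or equivalently a submodule of a quotient), so again by the monotonicity statements in Lemma~\ref{lem:properties_of_mg}~\eqref{lem:properties_of_mg:subadditivity} we get $\mg(H_n(G;M)) \le \mg(F_n \otimes_{\IZ G} M) \le d_n \cdot \mg(M)$.

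Next I would prove that $H_n(G;M)$ is annihilated by $|G|$ for $n \ge 1$. This is the standard transfer (restriction--corestriction) argument: for the trivial subgroup $\{1\} \subseteq G$ the composite $H_n(G;M) \xrightarrow{\res} H_n(\{1\};M) \xrightarrow{\cor} H_n(G;M)$ equals multiplication by $[G:1] = |G|$, and $H_n(\{1\};M) = 0$ for $n \ge 1$, so multiplication by $|G|$ is the zero map on $H_n(G;M)$. Consequently $H_n(G;M)$ is a finitely generated abelian group of exponent dividing $|G|$, hence finite; combined with $\mg(H_n(G;M)) \le d_n \cdot \mg(M)$ this gives a surjection $(\IZ/|G|)^{d_n \cdot \mg(M)} \twoheadrightarrow H_n(G;M)$ (after first choosing a surjection from $\IZ^{d_n \cdot \mg(M)}$ and then reducing mod $|G|$, which factors through because the exponent divides $|G|$). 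Counting cardinalities yields $|H_n(G;M)| \le |G|^{d_n \cdot \mg(M)}$, as claimed.

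The only mild subtlety — which I would phrase carefully rather than treat as an obstacle — is the passage from ``$H_n$ is a subquotient of $F_n \otimes_{\IZ G} M$'' to the bound on $\mg$: one should note that a subquotient $A$ of $B$ can be written as $A = A_1/A_0$ with $A_0 \subseteq A_1 \subseteq B$, so $\mg(A) = \mg(A_1/A_0) \le \mg(A_1) \le \mg(B)$ using the submodule and quotient-module parts of Lemma~\ref{lem:properties_of_mg}~\eqref{lem:properties_of_mg:subadditivity}. (Here $H_n(G;M) = \ker(\partial_n)/\im(\partial_{n+1})$ where $\partial_*$ is the differential on $F_* \otimes_{\IZ G} M$, so it is indeed of this form.) There is no genuinely hard step: the whole argument is a combination of the elementary homological algebra of computing group homology via a resolution, the transfer argument for the exponent, and the formal properties of $\mg$ already catalogued in Lemma~\ref{lem:properties_of_mg}. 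I would present it in the order: (1) express $H_n(G;M)$ via $F_*$; (2) bound $\mg$ of the chain group and hence of $H_n$; (3) run the transfer to get the exponent $|G|$; (4) combine to get the cardinality bound.
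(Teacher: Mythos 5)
Your proof is correct and follows essentially the same route as the paper: identify $H_n(G;M)$ as a subquotient of $M\otimes_{\IZ G} F_n \cong M^{\dim_{\IZ G}(F_n)}$ and apply the monotonicity and subadditivity statements of Lemma~\ref{lem:properties_of_mg}~\eqref{lem:properties_of_mg:subadditivity} to get $\mg(H_n(G;M)) \le d_n \cdot \mg(M)$, then combine the annihilation by $|G|$ with an epimorphism $(\IZ/|G|)^{\mg(H_n(G;M))} \to H_n(G;M)$ to count $|H_n(G;M)| \le |G|^{d_n\cdot\mg(M)}$. The only cosmetic difference is that you spell out the restriction--corestriction (transfer) argument for the exponent, whereas the paper simply cites Brown's book for that standard fact.
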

\begin{proof}
  Fix $n \ge 1$. Then $H_n(G;M)$ is annihilated by multiplication with $|G|$
  by~\cite[Corollary~10.2 on page 84]{Brown(1982)}.  By definition 
  $H_n(G;M) =  H_n(M \otimes_{\IZ G} F_*)$.  We conclude from
  Lemma~\ref{lem:properties_of_mg}~\eqref{lem:properties_of_mg:subadditivity}
  \begin{multline*}
    \mg\bigl(H_n(G;M)\bigr) = \mg\bigl(H_n(M \otimes_{\IZ G} F_*)\bigr) \le
    \mg(M \otimes_{\IZ G} F_n\bigr)
    \\
    = \mg\bigl(M^{\dim_{\IZ G}(F_n)}\bigr) \le \dim_{\IZ G}(F_n) \cdot \mg(M)
    \le d_n \cdot \mg(M).
  \end{multline*}
  Consider an epimorphism $f \colon \IZ^l \to H_n(G;M)$. Since multiplication
  with $|G|$ annihilates $H_n(G;M)$, it induces an epimorphism 
  $\overline{f} \colon (\IZ/|G|)^l \to H_n(G;M)$. This implies $|H_n(G;M)| \le |G|^l$. 
  We conclude
  \[
  |H_n(G;M)| \le |G|^{\mg(H_n(G;M))} \le |G|^{d_n\cdot \mg(M)}.
  \]
\end{proof}

\subsection{Nilpotent modules}
\label{subsec:nilpotent_modules}

\begin{definition}[Nilpotent $\IZ G$-module]
  We call a $\IZ G$-module $M$ \emph{nilpotent}, if there exists an integer $r
  \ge 0$ and a filtration $\{0\} = M_0 \subseteq M_1 \subseteq M_2 \subseteq
  \ldots \subseteq M_r = M$ such that the $G$-action on $M_i/M_{i-1}$ is trivial
  for $i = 1,2, \ldots, r$. The minimal number $r$ for which such a filtration
  exists is called the \emph{filtration length} of $M$.
\end{definition}

A $\IZ G$-module is trivial if and only if it is nilpotent of filtration length
$0$.  The $G$-action on an $\IZ G$-module $M$ is trivial if and only if it is
nilpotent of filtration length at most $1$. The elementary proof of the next
lemma is left to the reader.

\begin{lemma} \label{lem_filtration_length} Let $G$ be a group. Then:
  \begin{enumerate}

  \item \label{lem_filtration_length:submodule} A $\IZ G$-submodule of a
    nilpotent $\IZ G$-module of filtration length $r$ is again nilpotent and has
    filtration length $\le r$;

  \item \label{lem_filtration_length:quotientmodule} A $\IZ G$-quotient module of
    a nilpotent $\IZ G$-module of filtration length $r$ is again nilpotent and
    has filtration length $\le r$;

  \item \label{lem_filtration_length:exact_sequence} 
    Let $0 \to M_0 \to M_1 \to M_2 \to 0$ be an exact sequence of $\IZ G$-modules. 
    If two of them are nilpotent,
    then all three are nilpotent and the filtration length of $M_1$ is less or
    equal to the sum of the filtration length of $M_0$ and $M_2$;

  \item \label{lem_filtration_length:direct_sums} If $\{M_i \mid i \in I\}$
   is a family of nilpotent $\IZ G$-modules of filtration length $\le r$
   for some index set $I$, then $\bigoplus_{i \in I} M_i$ is a nilpotent 
   $\IZ G$-module of filtration length $\le r$.
  \end{enumerate}
\end{lemma}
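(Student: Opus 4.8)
The plan is to prove the four assertions one after another, in each case by transporting a given filtration with trivial subquotients along the relevant operation; everything reduces to the observation that a submodule or a quotient module of a $\IZ G$-module with trivial $G$-action again has trivial $G$-action, together with exactness of $\bigoplus$. For~\eqref{lem_filtration_length:submodule}, given a filtration $\{0\} = M_0 \subseteq M_1 \subseteq \cdots \subseteq M_r = M$ with trivial subquotients and a submodule $N \subseteq M$, I would set $N_i := N \cap M_i$. Since $M_{i-1} \subseteq M_i$ gives $N_i \cap M_{i-1} = N \cap M_{i-1} = N_{i-1}$, the inclusion induces an injection $N_i/N_{i-1} \hookrightarrow M_i/M_{i-1}$, so each $N_i/N_{i-1}$ carries the trivial $G$-action and $(N_i)_{0 \le i \le r}$ is a filtration of the required kind; discarding repetitions can only shorten it, so $N$ has filtration length $\le r$. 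Dually, for~\eqref{lem_filtration_length:quotientmodule} and a surjection $q \colon M \twoheadrightarrow \overline{M}$, putting $\overline{M}_i := q(M_i)$ yields surjections $M_i/M_{i-1} \twoheadrightarrow \overline{M}_i/\overline{M}_{i-1}$, so the latter modules have trivial $G$-action and $\overline{M}$ is nilpotent of filtration length $\le r$.

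For~\eqref{lem_filtration_length:exact_sequence}, write the sequence as $0 \to M_0 \xrightarrow{j} M_1 \xrightarrow{p} M_2 \to 0$. If $M_1$ is nilpotent then so are $M_0 \cong j(M_0)$ and $M_2 \cong M_1/j(M_0)$, by~\eqref{lem_filtration_length:submodule} and~\eqref{lem_filtration_length:quotientmodule}; combining this with the same two assertions, whenever two of the three modules are nilpotent it follows that $M_0$ and $M_2$ are both nilpotent. So it suffices to show that $M_0$ and $M_2$ nilpotent, with filtrations $\{0\} = A_0 \subseteq \cdots \subseteq A_s = M_0$ and $\{0\} = B_0 \subseteq \cdots \subseteq B_t = M_2$, forces $M_1$ to be nilpotent of filtration length $\le s+t$. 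For this I would splice the filtrations, using $j(M_0) = \ker(p) = p^{-1}(\{0\})$:
\begin{multline*}
\{0\} = j(A_0) \subseteq \cdots \subseteq j(A_s) \\ = p^{-1}(B_0) \subseteq p^{-1}(B_1) \subseteq \cdots \subseteq p^{-1}(B_t) = M_1.
\end{multline*}
Since $j$ is injective, $j(A_i)/j(A_{i-1}) \cong A_i/A_{i-1}$; since $p$ is surjective, $p$ restricts to a surjection $p^{-1}(B_i) \twoheadrightarrow B_i$ carrying $p^{-1}(B_{i-1})$ onto $B_{i-1}$, so $p^{-1}(B_i)/p^{-1}(B_{i-1}) \cong B_i/B_{i-1}$. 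All these subquotients carry the trivial $G$-action, so $M_1$ is nilpotent of filtration length $\le s+t$, as needed.

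For~\eqref{lem_filtration_length:direct_sums}, I would first pad the given filtration of each $M_\alpha$ by repeating terms so that it has the form $\{0\} = M_{\alpha,0} \subseteq \cdots \subseteq M_{\alpha,r} = M_\alpha$ with trivial subquotients, which is possible because every $M_\alpha$ has filtration length $\le r$. Then $N_i := \bigoplus_{\alpha \in I} M_{\alpha,i}$ defines a filtration of $\bigoplus_{\alpha \in I} M_\alpha$, and since direct sums of $\IZ G$-modules are exact one has $N_i/N_{i-1} \cong \bigoplus_{\alpha \in I} \bigl(M_{\alpha,i}/M_{\alpha,i-1}\bigr)$, a direct sum of modules with trivial $G$-action and hence itself of trivial $G$-action. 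So $\bigoplus_{\alpha \in I} M_\alpha$ is nilpotent of filtration length $\le r$.

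I do not expect a real obstacle here; the argument is routine bookkeeping, and the only points worth a moment's care are the identities $N_i \cap M_{i-1} = N_{i-1}$ and $j(M_0) = p^{-1}(\{0\})$ that make the spliced filtration in~\eqref{lem_filtration_length:exact_sequence} fit together at the seam, and the repeatedly used fact that triviality of the $G$-action is inherited by submodules, quotient modules, and arbitrary direct sums.
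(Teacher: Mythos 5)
Your proof is correct, and the paper offers no proof to compare against: it explicitly leaves this elementary lemma to the reader. Your argument (intersecting and pushing forward filtrations, splicing along $j(M_0)=\ker(p)$, and padding before taking direct sums) is exactly the standard bookkeeping the author intends.
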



 \typeout{--------------   Section 3: Proof of the main Theorem ----------------}

\section{Proof of the main Theorem~\ref{the:fibrations}}
\label{sec:Proof_of_the_main_Theorem}

In this section we present the proof of Theorem~\ref{the:fibrations}.

The following Subsections~\ref{subsec:Passage_from_H_n(C)_otimes_ZG_Z_to_H_n(C_otimes_ZG_Z)},~%
\ref{subsec:Passage_from_M_to_M_otimes_ZG_Z} and~\ref{subsec:Relating_H_n(C)_to_H_n(C_otimes_ZG_Z} deal with the problem
to give estimates of our invariants of interest for a $\IZ G$-chain complex $C_*$  with nilpotent homology in terms of their values for
$\IZ \otimes_{\IZ G} C_*$, where $G$ is a finite abelian group. The main result will be 
Proposition~\ref{pro:estimate_for_mg}.

\subsection{Passage from $ \IZ \otimes_{\IZ G}H_n(C_*)$ to $H_n(\IZ  \otimes_{\IZ G} C_*)$}
\label{subsec:Passage_from_H_n(C)_otimes_ZG_Z_to_H_n(C_otimes_ZG_Z)}

\begin{lemma} \label{lem:nu}
Let $G$ be a finite group. Let $C_*$ be a finitely generated free $\IZ G$-chain complex. 
Let $\nu_n = \nu_n(C_*)  \colon \IZ  \otimes_{\IZ G}  H_n(C_*) \to H_n(\IZ  \otimes_{\IZ G}  C_*)$ be the canonical map.

Then
\begin{eqnarray*}
  \bigl|\ker(\nu_n)\bigr| 
  & \le &
  \prod_{p=1}^n \bigl|H_{p+1}(G;H_{n-p}(C_*))\bigr|;
  \\
  \bigl|\coker(\nu_n)\bigr| 
  & \le &
  \prod_{p=1}^n \bigl|H_p(G;H_{n-p}(C_*))\bigr|;
  \\
  \left|\mg\bigl(\IZ  \otimes_{\IZ G} H_n(C_*)\bigr) 
  - \mg\bigl(H_n(\IZ  \otimes_{\IZ G} C_*)\bigr)\right|
  & \le &
  \sum_{p=1}^n \mg\bigl(H_p(G;H_{n-p}(C_*))\bigr) 
  \\
  & & \hspace{1mm} + \sum_{p=1}^n  \mg\bigl(H_{p+1}(G;H_{n-p}(C_*))\bigr).
\end{eqnarray*}
\end{lemma}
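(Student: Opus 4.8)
The plan is to run the standard homology spectral sequence attached to the double complex $P_* \otimes_{\IZ G} C_*$, where $P_* \to \IZ$ is a free $\IZ G$-resolution. Since each $C_n$ is a free, hence flat, $\IZ G$-module, filtering this double complex in the $C_*$-direction collapses (the columns are acyclic in positive degrees) and identifies the homology of the total complex with $H_*\bigl(\IZ \otimes_{\IZ G} C_*\bigr)$; filtering in the $P_*$-direction produces the first-quadrant spectral sequence
\[
E^2_{p,q} = H_p\bigl(G;H_q(C_*)\bigr) \;\Longrightarrow\; H_{p+q}\bigl(\IZ \otimes_{\IZ G} C_*\bigr).
\]
Because $C_*$ is finitely generated free over $\IZ G$ and $G$ is finite, every $H_q(C_*)$ is a finitely generated abelian group, and $H_p\bigl(G;H_q(C_*)\bigr)$ is \emph{finite} for $p \ge 1$; this is what makes the asserted orders meaningful.

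The second step is to recognize $\nu_n$ as the edge homomorphism
\[
E^2_{0,n} = \IZ \otimes_{\IZ G} H_n(C_*) \twoheadrightarrow E^\infty_{0,n} \hookrightarrow H_n\bigl(\IZ \otimes_{\IZ G} C_*\bigr),
\]
which is a routine check on the level of representing cycles. Consequently $\ker(\nu_n) = \ker\bigl(E^2_{0,n} \to E^\infty_{0,n}\bigr)$ carries a finite filtration whose successive quotients are the images of the differentials $d^r \colon E^r_{r,\,n-r+1} \to E^r_{0,n}$ for $r = 2,3,\dots,n+1$; each such image is a subquotient of $E^2_{r,\,n-r+1} = H_r\bigl(G;H_{n-r+1}(C_*)\bigr)$, and after the substitution $p = r-1$ this becomes $H_{p+1}\bigl(G;H_{n-p}(C_*)\bigr)$ with $p$ running from $1$ to $n$. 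Dually, $\coker(\nu_n) = H_n\bigl(\IZ \otimes_{\IZ G} C_*\bigr)/E^\infty_{0,n}$ carries a finite filtration with successive quotients $E^\infty_{p,\,n-p}$ for $p = 1,\dots,n$, each a subquotient of $E^2_{p,\,n-p} = H_p\bigl(G;H_{n-p}(C_*)\bigr)$.

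The third step extracts the three estimates. For the orders, multiplicativity of cardinality along the two filtrations, together with $\bigl|E^\infty_{\ast\ast}\bigr| \le \bigl|E^2_{\ast\ast}\bigr|$ for subquotients, gives $\bigl|\ker(\nu_n)\bigr| \le \prod_{p=1}^n \bigl|H_{p+1}(G;H_{n-p}(C_*))\bigr|$ and $\bigl|\coker(\nu_n)\bigr| \le \prod_{p=1}^n \bigl|H_p(G;H_{n-p}(C_*))\bigr|$. For the $\mg$-estimate, apply Lemma~\ref{lem:properties_of_mg}~\eqref{lem:properties_of_mg:subadditivity} to the two short exact sequences $0 \to \ker(\nu_n) \to \IZ \otimes_{\IZ G} H_n(C_*) \to \im(\nu_n) \to 0$ and $0 \to \im(\nu_n) \to H_n(\IZ \otimes_{\IZ G} C_*) \to \coker(\nu_n) \to 0$: the first yields $\mg\bigl(\IZ \otimes_{\IZ G} H_n(C_*)\bigr) \le \mg(\ker(\nu_n)) + \mg\bigl(H_n(\IZ \otimes_{\IZ G} C_*)\bigr)$, the second the symmetric inequality with $\coker(\nu_n)$, so
\[
\bigl|\mg(\IZ \otimes_{\IZ G} H_n(C_*)) - \mg(H_n(\IZ \otimes_{\IZ G} C_*))\bigr| \le \mg(\ker(\nu_n)) + \mg(\coker(\nu_n)).
\]
Since $\mg$ is subadditive under extensions and does not increase under passage to submodules or quotient modules (again Lemma~\ref{lem:properties_of_mg}~\eqref{lem:properties_of_mg:subadditivity}), the two filtrations of the previous step give $\mg(\ker(\nu_n)) \le \sum_{p=1}^n \mg\bigl(H_{p+1}(G;H_{n-p}(C_*))\bigr)$ and $\mg(\coker(\nu_n)) \le \sum_{p=1}^n \mg\bigl(H_p(G;H_{n-p}(C_*))\bigr)$; adding these gives the asserted bound.

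I expect the main obstacle to be purely organizational: checking that $\nu_n$ is genuinely the edge map, that the graded pieces of the filtrations of $\ker(\nu_n)$ and $\coker(\nu_n)$ are exactly those $E^r$- and $E^\infty$-terms, and that the reindexing $p = r-1$ is arranged so every product and sum runs over $p = 1,\dots,n$. Everything after that is a routine application of the bookkeeping properties of $\mg$ collected in Lemma~\ref{lem:properties_of_mg}. One could instead avoid the spectral sequence by an induction on the number of nonzero chain groups of $C_*$, using the long exact sequences attached to the truncations of $C_*$, but the spectral sequence makes the shape of the bounds most transparent.
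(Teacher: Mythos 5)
Your proposal is correct and follows essentially the same route as the paper: the paper likewise identifies $\nu_n$ as the edge homomorphism of the universal coefficient spectral sequence $E^2_{p,q} = H_p(G;H_q(C_*)) \Rightarrow H_n(\IZ \otimes_{\IZ G} C_*)$, extracts the four-term exact sequence with filtered kernel and cokernel whose graded pieces are subquotients of $H_{p+1}(G;H_{n-p}(C_*))$ and $H_p(G;H_{n-p}(C_*))$, and then applies Lemma~\ref{lem:properties_of_mg}~\eqref{lem:properties_of_mg:subadditivity} exactly as you do. Your explicit construction of the spectral sequence from the double complex $P_* \otimes_{\IZ G} C_*$ is just the standard derivation of what the paper cites directly, so there is no substantive difference.
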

\begin{proof}
  The map $\nu_n \colon \IZ  \otimes_{\IZ G} H_n(C_*)  = \Tor^{\IZ
    G}_0(H_n(C_*),\IZ) \to H_n(\IZ  \otimes_{\IZ G}  C_*)$ is the edge
  homomorphism in the universal coefficient spectral sequence whose $E^2$-term
  is $\Tor^{\IZ G}_p(H_q(C_*),\IZ) = H_p(G;H_q(C_*))$ and which converges to
  $H_n(\IZ  \otimes_{\IZ G}  C_*)$. From this spectral sequence we obtain an exact
  sequence of abelian groups
  \[
  0 \to A \to \IZ  \otimes_{\IZ G}  H_n(C_*)  \xrightarrow{\nu_n} H_n(\IZ  \otimes_{\IZ G}  C_*) \to B \to 0,
  \]
  where the modules $A$ and $B$ have filtrations $\{0\} \subseteq A_1 \subseteq
  \cdots \subseteq A_n = A$ and $\{0\} \subseteq B_1 \subseteq \cdots\subseteq  B_n = B$
  such that $A_n$ is a sub-quotient of $H_{p+1}(G;H_{n-p}(C_*))$ and
  $B_n$ is a sub-quotient of $H_p(G;H_{n-p}(C_*))$ for each $p \in
  \{1,2, \ldots, n\}$. In particular we get from
  Lemma~\ref{lem:properties_of_mg}~\eqref{lem:properties_of_mg:subadditivity}
  \begin{eqnarray*}
    |A| 
    & \le & 
    \prod_{p=1}^n \bigl|H_{p+1}(G;H_{n-p}(C_*))\bigr|;
    \\
    |B| 
    & \le & 
    \prod_{p=1}^n \bigl|H_{p}(G;H_{n-p}(C_*))\bigr|;
    \\
    \mg(A)
    & \le & 
    \sum_{p=1}^n \mg\bigl(H_{p+1}(G;H_{n-p}(C_*))\bigr);
    \\
    \mg(B) 
    & \le & 
    \sum_{p=1}^n \mg\bigl(H_{p}(G;H_{n-p}(C_*))\bigr);
    \\
    \mg(A) 
    & \ge & 
    \mg\bigl( \IZ  \otimes_{\IZ G} H_n(C_*)\bigr) 
    - \mg\bigl(H_n(\IZ  \otimes_{\IZ G} C_*)\bigr);
    \\
    \mg(B) 
    & \ge & 
    \mg\bigl( H_n(\IZ  \otimes_{\IZ G} C_*)\bigr) 
    - \mg\bigl( \IZ  \otimes_{\IZ G}  H_n(C_*)\bigr). 
  \end{eqnarray*}
  Now Lemma~\ref{lem:nu} follows.
\end{proof}

\subsection{Passage from $M$ to $\IZ  \otimes_{\IZ G} M $}
\label{subsec:Passage_from_M_to_M_otimes_ZG_Z}

\begin{lemma} \label{lem:mu} Let $G$ be a finite group.
  Let $M$ be a nilpotent $\IZ G$-module. Let $r \ge
  1$ be an integer such that the filtration length of $M$ is less or equal to
  $r$. Then the kernel of the canonical epimorphism 
  $\mu = \mu(M) \colon M \to \IZ  \otimes_{\IZ G} M$ is finite and we have
  \begin{eqnarray*}
    \bigl|\ker(\mu(M))\bigr| 
    & \le &
    |G|^{(r-1) \cdot \mg(G) \cdot \mg(M)};
    \\
    \mg(M) 
    & \le &
    r \cdot (\mg(G) + 1)^{r-1} \cdot \mg(\IZ  \otimes_{\IZ G} M).
  \end{eqnarray*}
\end{lemma}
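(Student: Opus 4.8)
The plan is to induct on the filtration length $r$. For the base case $r \le 1$ the $G$-action on $M$ is trivial, so $\mu(M)\colon M \to \IZ\otimes_{\IZ G}M$ is an isomorphism, $\ker(\mu(M)) = 0$ and $\mg(M) = \mg(\IZ\otimes_{\IZ G}M)$; both asserted inequalities hold trivially. For the inductive step, fix a filtration $\{0\} = M_0 \subseteq M_1 \subseteq \cdots \subseteq M_r = M$ realizing length $r$, and set $N := M_{r-1}$, so that the $G$-action on $M/N$ is trivial and $N$ has filtration length $\le r-1$. The strategy is to compare the right-exact functor $\IZ\otimes_{\IZ G}-$ applied to $0 \to N \to M \to M/N \to 0$ with the original sequence, using the fact that $\IZ\otimes_{\IZ G}(M/N) = M/N$ since the action there is trivial.

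First I would record the right-exact sequence
\[
\IZ\otimes_{\IZ G}N \xrightarrow{\;\iota\;} \IZ\otimes_{\IZ G}M \longrightarrow M/N \longrightarrow 0,
\]
and fit it into a commutative diagram with the rows $0 \to N \to M \to M/N \to 0$ (top) and the above (bottom), with vertical maps $\mu(N)$, $\mu(M)$, and $\id_{M/N}$. A diagram chase then shows that $\ker(\mu(M))$ is a quotient of $\ker(\mu(N) \text{ composed with } \iota)$, hence maps onto by something controlled by $\ker(\mu(N))$ together with the image in $\IZ\otimes_{\IZ G}M$ of the ``new'' kernel coming from the Tor-term; more precisely, the kernel of $\IZ\otimes_{\IZ G}N \to \IZ\otimes_{\IZ G}M$ is a quotient of $\Tor_1^{\IZ G}(M/N,\IZ) = H_1(G;M/N)$, and since the action on $M/N$ is trivial, $H_1(G;M/N) \cong H_1(G;\IZ)\otimes_{\IZ}(M/N)$ (plus a Tor correction), which by Lemma~\ref{lemma:order_of_group_homology} is annihilated by $|G|$ and has $\mg \le \mg(G)\cdot\mg(M/N) \le \mg(G)\cdot\mg(M)$. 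Combining the bound $\bigl|\ker(\mu(N))\bigr| \le |G|^{(r-2)\mg(G)\mg(N)}$ from the induction hypothesis with the bound $|G|^{\mg(G)\mg(M)}$ on the contribution of this Tor-term (using $\mg(N) \le \mg(M)$, which follows from Lemma~\ref{lem:properties_of_mg}~\eqref{lem:properties_of_mg:subadditivity} applied to $N \subseteq M$) gives $\bigl|\ker(\mu(M))\bigr| \le |G|^{(r-1)\mg(G)\mg(M)}$, as required.

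For the generator bound, the same diagram gives an exact sequence
\[
\IZ\otimes_{\IZ G}N \longrightarrow \IZ\otimes_{\IZ G}M \longrightarrow M/N \longrightarrow 0
\]
together with control on the kernel of its left-hand map by $H_1(G;M/N)$. From subadditivity (Lemma~\ref{lem:properties_of_mg}~\eqref{lem:properties_of_mg:subadditivity}) I get $\mg(\IZ\otimes_{\IZ G}N) \le \mg(\IZ\otimes_{\IZ G}M) + \mg(H_1(G;M/N)) \le \mg(\IZ\otimes_{\IZ G}M) + \mg(G)\cdot\mg(M)$, and from the top row $\mg(M) \le \mg(N) + \mg(M/N)$. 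Applying the induction hypothesis to $N$, namely $\mg(N) \le (r-1)(\mg(G)+1)^{r-2}\mg(\IZ\otimes_{\IZ G}N)$, and using $\mg(M/N) = \mg(\IZ\otimes_{\IZ G}(M/N)) \le \mg(\IZ\otimes_{\IZ G}M)$ (the latter from right-exactness of $\IZ\otimes_{\IZ G}-$ and Lemma~\ref{lem:properties_of_mg}~\eqref{lem:properties_of_mg:subadditivity}), I chain these estimates together; a bit of bookkeeping with the factors $(\mg(G)+1)$ collapses the result to $\mg(M) \le r(\mg(G)+1)^{r-1}\mg(\IZ\otimes_{\IZ G}M)$.

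The main obstacle will be the precise identification and estimation of the kernel of $\IZ\otimes_{\IZ G}N \to \IZ\otimes_{\IZ G}M$: one must verify that it is a quotient of $\Tor_1^{\IZ G}(M/N,\IZ)$ (from the long exact Tor-sequence of $0 \to N \to M \to M/N \to 0$), and then that the triviality of the $G$-action on $M/N$ lets one replace $\Tor_1^{\IZ G}(M/N,\IZ)$ by something built from $H_*(G;\IZ)$ and $M/N$ whose minimal number of generators is bounded by $\mg(G)\cdot\mg(M/N)$ — this is exactly the content of Lemma~\ref{lemma:order_of_group_homology} applied with $d_1 \le \mg(G)$ (using a resolution of $\IZ$ over $\IZ G$ whose degree-one part has rank $\mg(G)$, coming from a presentation of $G$), though one has to be slightly careful since $H_1(G;\IZ)$ rather than $H_1(G;M/N)$ is what appears and a universal-coefficient argument is needed. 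Everything else is diagram chasing and arithmetic of exponents, which I would not spell out in full.
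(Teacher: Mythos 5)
Your plan is correct and follows essentially the same route as the paper's proof: induct on the filtration length, split off the trivial-action quotient $M/N$ with $N$ of length $\le r-1$, identify the kernel of $\IZ\otimes_{\IZ G}N\to\IZ\otimes_{\IZ G}M$ as a quotient of $\Tor_1^{\IZ G}(M/N,\IZ)=H_1(G;M/N)$, and bound that group (order and $\mg$) via a free resolution of $\IZ$ whose degree-one term has rank $\mg(G)$, combined with subadditivity of $\mg$ and the induction hypothesis. The only differences are cosmetic: the paper chains the generator estimate through the exact sequence $0\to\ker(\mu(N))\to\ker(\mu(M))\to\im\bigl(H_1(G;M/N)\to\IZ\otimes_{\IZ G}N\bigr)\to 0$ rather than through $\mg(M)\le\mg(N)+\mg(M/N)$, and your universal-coefficient detour is unnecessary since Lemma~\ref{lemma:order_of_group_homology} applies directly with coefficients in $M/N$; your exponent bookkeeping does collapse to $r\cdot(\mg(G)+1)^{r-1}$ exactly as claimed.
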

\begin{proof}
  We use induction over the filtration length $r$ of $M$. The induction
  beginning $r = 1$ is obviously true.  The induction step from $r-1$ to 
  $r\ge 2$ is done as follows.

  Choose an exact sequence of $\IZ G$-modules $0 \to M' \to M \to M'' \to 0$
  such that $M'$ is nilpotent of filtration length $\le r-1$ and $G$ acts
  trivially on $M''$.  We have the following commutative diagram with exact rows
\[
\xymatrix{ 0 \ar[r] 
& 
M' \ar[r] \ar[d]^{\mu(M')} 
& 
M \ar[r] \ar[d]^{\mu(M)} 
& 
M'' \ar[r] \ar[d]^{\mu(M'')}_{\cong}
& 0
\\
H_1(G;M'') \ar[r] 
& 
\IZ  \otimes_{\IZ G}  M' \ar[r]
& 
\IZ  \otimes_{\IZ G} M  \ar[r]
& 
\IZ  \otimes_{\IZ G}  M'' \ar[r]
&
0
}
\]
It induces the exact sequence
\begin{eqnarray}
& & 0 \to  \ker(\mu(M')) \to \ker(\mu(M)) \to \im\bigl(H_1(G;M'') 
\to \IZ  \otimes_{\IZ G} M'\bigr) \to 0.
\label{lem:mu:exact_sequence}
\end{eqnarray}
Let $\{s_1, s_2, \ldots, s_{\mg(G)}\}$ be a finite set of generators of $G$ with
minimal numbers of generators.  We obtain an exact sequence of $\IZ G$-modules
\[
\IZ G^{\mg(G)} \xrightarrow{(s_1 -1, s_2 -1, \ldots , s_{\mg(G)} -1)} 
\IZ G \to \IZ \to 0,
\]
which can be extended to the left to a free $\IZ G$-resolution of the trivial
$\IZ G$-module $\IZ$.  Because of
Lemma~\ref{lem:properties_of_mg}~\eqref{lem:properties_of_mg:subadditivity} this
implies
\begin{multline}
  \mg\bigl(H_1(G,M'')\bigr) \le \mg\bigl(\IZ G^{\mg(G)} \otimes_{\IZ G}
  M''\bigr) \le \mg(G) \cdot \mg(M'')
  \\
  = \mg(G) \cdot \mg(\IZ  \otimes_{\IZ G} M'') \le \mg(G) 
  \cdot \mg(\IZ  \otimes_{\IZ G} M).
  \label{lem:mu:(1)}
\end{multline}

As multiplication with $G$ annihilates $H_1(G,M)$ we get
\begin{eqnarray}
|H_1(G,M'')| 
& \le & 
|G|^{\mg(\IZ  \otimes_{\IZ G} M) \cdot \mg(G)}.
\label{lem:mu:(2)}
\end{eqnarray}
This implies
\begin{eqnarray}
|H_1(G,M'')| 
& \le & 
|G|^{\mg(M) \cdot \mg(G)}.
\label{lem:mu:(3)}
\end{eqnarray}
By induction hypothesis
\begin{eqnarray}
\bigl|\ker(\mu(M'))\bigr| 
& \le & 
|G|^{(r-2) \cdot \mg(G) \cdot \mg(M')}.
\label{lem:mu:(4)}
\end{eqnarray}
We conclude from~\eqref{lem:mu:exact_sequence},~\eqref{lem:mu:(3)} 
and~\eqref{lem:mu:(4)} using Lemma~\ref{lem:properties_of_mg}~\eqref{lem:properties_of_mg:subadditivity}
\begin{eqnarray*}
\bigl|\ker(\mu(M))\bigr| 
& \le &
\bigl|\ker(\mu(M'))\bigr| \cdot |H_1(G;M'')| 
\\
& \le &
|G|^{(r-2) \cdot \mg(G) \cdot \mg(M')} \cdot |G|^{\mg(M'') \cdot \mg(G)}
\\
& \le &
|G|^{(r-2) \cdot \mg(G) \cdot \mg(M)} \cdot |G|^{\mg(M) \cdot \mg(G)}
\\
& = & 
|G|^{(r-1) \cdot \mg(G) \cdot \mg(M)}.
\end{eqnarray*}
Next  we estimate using 
Lemma~\ref{lem:properties_of_mg}~\eqref{lem:properties_of_mg:subadditivity},%
~\eqref{lem:mu:exact_sequence} and~\eqref{lem:mu:(1)} 
\begin{eqnarray}
\label{lem:mu:(5)} \quad 
\mg(M) 
& \le &
\mg\bigl(\ker(\mu(M))\bigr) + \mg(\IZ  \otimes_{\IZ G} M)
\\
& \le &
\mg\bigl(\ker(\mu(M'))\bigr) + \mg(H_1(G;M'')) + \mg(\IZ  \otimes_{\IZ G} M)
\nonumber
\\
& \le &
\mg\bigl(\ker(\mu(M'))\bigr) + \mg(G) 
\cdot \mg(\IZ  \otimes_{\IZ G} M) + \mg(\IZ  \otimes_{\IZ G} M)
\nonumber
\\
& \le &
\mg(M') + (\mg(G) + 1) \cdot \mg(\IZ  \otimes_{\IZ G} M). 
\nonumber
\end{eqnarray}
We estimate using~\eqref{lem:mu:(1)} and Lemma~\ref{lem:properties_of_mg}~\eqref{lem:properties_of_mg:subadditivity}
\begin{eqnarray}
\label{lem:mu:(6)} 
\mg(\IZ  \otimes_{\IZ G} M') 
& \le & 
\mg(H_1(G;M''))  + \mg(\IZ  \otimes_{\IZ G} M) 
\\
& \le &
 \mg(G) \cdot \mg(\IZ  \otimes_{\IZ G} M) + \mg(\IZ  \otimes_{\IZ G} M) 
\nonumber
\\
& = & 
(\mg(G) +1 ) \cdot \mg(\IZ  \otimes_{\IZ G} M).
\nonumber
\end{eqnarray}
By induction hypothesis
\begin{eqnarray}
\mg(M') 
& \le &
(r-1) \cdot  (m(G)+1)^{r-2} \cdot \mg(\IZ  \otimes_{\IZ G} M').
\label{lem:mu:(7)}
\end{eqnarray}
We conclude from~\eqref{lem:mu:(5)},~\eqref{lem:mu:(6)}  and~\eqref{lem:mu:(7)}
\begin{eqnarray*}
\mg(M) 
& \le &
\mg(M') + (\mg(G) + 1) \cdot \mg(\IZ  \otimes_{\IZ G} M)
\\
& \le &
(r-1) \cdot  (m(G)+1)^{r-2} \cdot \mg(\IZ  \otimes_{\IZ G} M') + (\mg(G) + 1) 
\cdot \mg(\IZ  \otimes_{\IZ G} M)
\\
& \le &
(r-1) \cdot  (m(G)+1)^{r-2} \cdot (\mg(G) +1 ) \cdot \mg(\IZ  \otimes_{\IZ G} M) 
\\
& & 
\hspace{30mm} + (\mg(G) + 1) \cdot \mg(\IZ  \otimes_{\IZ G} M)
\\
& = &
\left((r-1) \cdot  (m(G)+1)^{r-2} +1\right) \cdot (\mg(G) +1 ) 
\cdot \mg(\IZ  \otimes_{\IZ G} M) 
\\
& \le &
r \cdot  (m(G)+1)^{r-2}\cdot (\mg(G) +1 ) \cdot \mg(\IZ  \otimes_{\IZ G} M) 
\\
& = &
r \cdot  (m(G)+1)^{r-1} \cdot \mg(\IZ  \otimes_{\IZ G} M).
\end{eqnarray*}
This finishes the proof of Lemma~\ref{lem:mu}.
\end{proof}

\begin{lemma}\label{lem:finite_quotients_of_Zm}
Let $G$ be a finite abelian group.
Then there exists a free $\IZ G$-resolution $F_*$  of the trivial
$\IZ G$-module $\IZ$ satisfying for $n \ge 0$
\[\dim_{\IZ G}(F_n) =  \binom{n + \mg(G)-1}{\mg(G)-1}.
\]
\end{lemma}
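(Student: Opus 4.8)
The plan is to reduce to cyclic groups via the structure theorem for finite abelian groups and then build $F_*$ as a tensor product of the standard periodic resolutions.

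First I would write $G$ in invariant-factor form, $G \cong \IZ/d_1 \times \IZ/d_2 \times \cdots \times \IZ/d_k$ with $2 \le d_1 \mid d_2 \mid \cdots \mid d_k$; by Lemma~\ref{lem:properties_of_mg}~\eqref{lem:properties_of_mg:structure_d} the number $k$ of cyclic factors is exactly $\mg(G)$. Correspondingly $\IZ G \cong \IZ[\IZ/d_1] \otimes_{\IZ} \cdots \otimes_{\IZ} \IZ[\IZ/d_k]$ as rings. Next, for a single cyclic group $\IZ/d = \langle t \rangle$ I would use the classical $2$-periodic free resolution $P_*$ of the trivial $\IZ[\IZ/d]$-module $\IZ$, with $P_n = \IZ[\IZ/d]$ for every $n \ge 0$ and differentials alternating between multiplication by $t - 1$ and by the norm element $N = 1 + t + \cdots + t^{d-1}$; its exactness is the elementary computation that the annihilator of $t-1$ in $\IZ[\IZ/d]$ is $\IZ \cdot N$ and the annihilator of $N$ is the augmentation ideal $(t-1)\IZ[\IZ/d]$. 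The point is that this resolution is free of rank $1$ in every degree.

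Then I would set $F_* := P^{(1)}_* \otimes_{\IZ} P^{(2)}_* \otimes_{\IZ} \cdots \otimes_{\IZ} P^{(k)}_*$, where $P^{(i)}_*$ is the above resolution over $\IZ[\IZ/d_i]$, with the usual sign differential. Since a tensor product over $\IZ$ of free $\IZ[\IZ/d_i]$-modules is a free module over $\IZ[\IZ/d_1]\otimes_\IZ \cdots \otimes_\IZ \IZ[\IZ/d_k] = \IZ G$, this $F_*$ is a complex of free $\IZ G$-modules. Each $P^{(i)}_*$ is a complex of free abelian groups with homology $\IZ$ concentrated in degree $0$, so iterating the Künneth theorem (the $\Tor$-terms vanish because $\Tor^{\IZ}_*(\IZ,-) = 0$) gives $H_*(F_*) = \IZ$ concentrated in degree $0$; together with the augmentation $F_0 \to \IZ$ this exhibits $F_*$ as a free $\IZ G$-resolution of $\IZ$. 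Finally, $F_n = \bigoplus_{n_1 + \cdots + n_k = n} P^{(1)}_{n_1} \otimes_\IZ \cdots \otimes_\IZ P^{(k)}_{n_k}$, and each summand is free of rank $1$ over $\IZ G$, so $\dim_{\IZ G}(F_n)$ equals the number of $k$-tuples of non-negative integers with sum $n$, which is $\binom{n+k-1}{k-1} = \binom{n + \mg(G)-1}{\mg(G)-1}$.

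The only step needing real care — and the one I would treat most carefully — is verifying that the tensor product of the cyclic resolutions is again a resolution: this rests on the vanishing of the relevant $\Tor$-groups in the Künneth formula, which holds precisely because the homology of each tensor factor is the free abelian group $\IZ$. Everything else is bookkeeping. The degenerate case $G = \trivial$ (so $\mg(G) = 0$) is covered by the empty tensor product $F_* = \IZ$ concentrated in degree $0$, which is consistent with the stated formula under the usual conventions for binomial coefficients.
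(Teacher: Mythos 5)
Your proposal is correct and follows essentially the same route as the paper: decompose $G$ into $\mg(G)$ cyclic factors via Lemma~\ref{lem:properties_of_mg}~\eqref{lem:properties_of_mg:structure_d}, take the standard periodic rank-one resolution over each $\IZ[\IZ/d_i]$, tensor them to get a free $\IZ G$-resolution, and count weak compositions of $n$ to obtain $\binom{n+\mg(G)-1}{\mg(G)-1}$. You merely spell out the details (explicit differentials, the K\"unneth argument for exactness, the trivial-group case) that the paper leaves implicit.
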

\begin{proof}
  Put $m = \mg(G)$. Then we get decomposition
  \[G = \bigoplus_{i = 1}^m \IZ/d_i
  \]
  as direct sum of finite cyclic groups from
  Lemma~\ref{lem:properties_of_mg}~\eqref{lem:properties_of_mg:structure_d}.
   There is a free $\IZ[\IZ/d_i]$-resolution
  $F^{(i)}_*$ of the trivial $\IZ[\IZ/d_i]$-module $\IZ$ such that
  $\dim_{\IZ[\IZ/d_i]}(F^{(i)}_n) = 1$ for all $n \ge 0$. Then 
  $F_* = \bigotimes_{i = 1}^m F^{(i)}_*$ is a free $\IZ G$-resolution of the trivial
  $\IZ G$-module $\IZ$. Obviously $\dim_{\IZ G}(F_n)$ is the number of weak compositions 
  of $n$ into $m$ non-negative integers, i.e., the number of $k$-tuples $(i_1,i_2, \ldots, i_m)$
  of non-negative integers such that $i _1 + i_2 + \cdots + i_m = n$.  Hence
  \[
  \dim_{\IZ G}(F_n) = \binom{n + m-1}{m-1}. 
  \]
\end{proof}

\begin{remark}
Lemma~\ref{lem:finite_quotients_of_Zm} is the reason why we often assume the existence of an infinite
normal subgroup $A \subseteq G$ which is abelian. The estimate we get and will need is polynomial in $\mg(G)$.
There is an obvious polynomial estimate in terms of $|G|$ coming from the bar resolution
which works for all finite groups, but it is not sufficient for us.
\end{remark}

\subsection{Relating $H_n(C_*)$ to $H_n(\IZ  \otimes_{\IZ G} C_*)$}
\label{subsec:Relating_H_n(C)_to_H_n(C_otimes_ZG_Z}

\begin{proposition} \label{pro:estimate_for_mg} There exists functions
  \begin{eqnarray*}
    C_0,C_1,D_0,D_1 \colon \bigl\{(r,n,p) \in \IN^3\mid p \le n\bigr\} & \to & \IR, \quad
  \end{eqnarray*}
  such that the following is true:

  Let $G$ be a finite abelian group. Let $C_*$ be a finitely generated free
  $\IZ G$-chain complex.  Suppose that $H_p(C_*)$ is a nilpotent $\IZ G$-module
  of filtration length $ \le r$ for $p = 0,1,2, \ldots , n$. Then:
  \begin{enumerate}
  \item \label{pro:estimate_for_mg:mg} We have
    \[
    \mg(H_n(C_*)) \le \sum_{p = 0}^n C_0(r,n,p) \cdot \mg(G)^{C_1(r,n,p)} \mg\bigl(H_p(\IZ  \otimes_{\IZ G} C_*)\bigr);
    \]

  \item \label{pro:estimate_for_mg:order} We have
    \begin{eqnarray*}
      \ln\left(\bigl|\ker(H_n(\pr_*))\bigr|\right)
      & \le &
      \sum_{p = 0}^n D_0(r,n,p) \cdot \ln(|G|) \cdot  \mg(G)^{D_1(r,n,p)} 
      \cdot \mg\bigl(H_p(\IZ  \otimes_{\IZ G} C_*)\bigr);
      \\
      \ln\left(\bigl|\coker(H_n(\pr_*))\bigr|\right)
      & \le &
      \sum_{p = 0}^n D_0(r,n,p) \cdot \ln(|G|) \cdot  \mg(G)^{D_1(r,n,p)}  
      \cdot \mg\bigl(H_p(\IZ  \otimes_{\IZ G} C_*)\bigr),
    \end{eqnarray*}
    where $\pr_* \colon C_*\to \IZ  \otimes_{\IZ G} C_*$ is the canonical
    projection.
  \end{enumerate}
\end{proposition}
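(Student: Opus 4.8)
The plan is to factor the map $H_n(\pr_*)$ through $\IZ \otimes_{\IZ G} H_n(C_*)$ and then feed in Lemmas~\ref{lem:nu}, \ref{lem:mu}, \ref{lem:finite_quotients_of_Zm} and~\ref{lemma:order_of_group_homology}, running an induction on $n$ to define $C_0,C_1,D_0,D_1$ recursively. We may assume $\mg(G) \ge 1$ and $r \ge 1$, since otherwise $G = \{1\}$ and $H_n(\pr_*)$ is the identity, or all $H_p(C_*)$ vanish, and the claim is trivial. First I would record that $H_n(\pr_*)$ factors as
\[
H_n(C_*) \xrightarrow{\mu} \IZ \otimes_{\IZ G} H_n(C_*) \xrightarrow{\nu_n} H_n(\IZ \otimes_{\IZ G} C_*),
\]
where $\mu = \mu(H_n(C_*))$ is the canonical epimorphism of Lemma~\ref{lem:mu} and $\nu_n = \nu_n(C_*)$ is the canonical map of Lemma~\ref{lem:nu}; on classes this is just $[z] \mapsto 1 \otimes [z] \mapsto [1 \otimes z]$. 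Since $G$ is finite and $C_*$ is finitely generated free, each $H_p(C_*)$ is a finitely generated abelian group with $\IZ G$-action, nilpotent of filtration length $\le r$ by hypothesis, so Lemmas~\ref{lem:nu} and~\ref{lem:mu} apply. As an auxiliary estimate I would combine Lemma~\ref{lem:finite_quotients_of_Zm} (which provides a free resolution of $\IZ$ with $\dim_{\IZ G}(F_q) = \binom{q + \mg(G) - 1}{\mg(G) - 1}$, a polynomial in $\mg(G)$ of degree $q$, for which I write $b_q(\mg(G))$) with Lemma~\ref{lemma:order_of_group_homology} to get, for every $\IZ G$-module $N$ that is finitely generated as an abelian group and every $q \ge 1$,
\[
\mg\bigl(H_q(G;N)\bigr) \le b_q(\mg(G)) \cdot \mg(N), \qquad \ln\bigl|H_q(G;N)\bigr| \le b_q(\mg(G)) \cdot \mg(N) \cdot \ln|G|.
\]
Since $\mg(G) \ge 1$, both $b_q(\mg(G))$ and the factor $(\mg(G)+1)^{r-1}$ occurring in Lemma~\ref{lem:mu} are bounded by constants (depending only on $q$, resp.\ $r$) times powers of $\mg(G)$.

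For assertion~\eqref{pro:estimate_for_mg:mg} I would induct on $n$. For $n = 0$ the map $\nu_0$ is an isomorphism (the products in Lemma~\ref{lem:nu} are empty), so Lemma~\ref{lem:mu} gives directly $\mg(H_0(C_*)) \le r(\mg(G)+1)^{r-1}\mg\bigl(H_0(\IZ \otimes_{\IZ G} C_*)\bigr)$. For the step, Lemma~\ref{lem:mu} gives $\mg(H_n(C_*)) \le r(\mg(G)+1)^{r-1}\mg\bigl(\IZ \otimes_{\IZ G} H_n(C_*)\bigr)$; Lemma~\ref{lem:nu} bounds $\mg\bigl(\IZ \otimes_{\IZ G} H_n(C_*)\bigr)$ by $\mg\bigl(H_n(\IZ \otimes_{\IZ G} C_*)\bigr)$ plus $\sum_{p=1}^n \bigl(\mg(H_p(G;H_{n-p}(C_*))) + \mg(H_{p+1}(G;H_{n-p}(C_*)))\bigr)$; the auxiliary estimate bounds each summand by $\bigl(b_p(\mg(G)) + b_{p+1}(\mg(G))\bigr)\mg(H_{n-p}(C_*))$; and, since $n - p \le n-1$, the inductive hypothesis rewrites $\mg(H_{n-p}(C_*))$ as a sum over $p' \le n-p$ of $C_0(r,n-p,p')\mg(G)^{C_1(r,n-p,p')}\mg\bigl(H_{p'}(\IZ \otimes_{\IZ G} C_*)\bigr)$. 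Collecting the (finitely many) terms and using $\mg(G)\ge 1$ to absorb all binomial and exponential factors into powers of $\mg(G)$ produces the required bound and defines $C_0(r,n,p)$ and $C_1(r,n,p)$.

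For assertion~\eqref{pro:estimate_for_mg:order} I would exploit the factorization. Because $\mu$ is surjective, $\coker(H_n(\pr_*)) \cong \coker(\nu_n)$ and there is a short exact sequence $0 \to \ker(\mu) \to \ker(H_n(\pr_*)) \to \ker(\nu_n) \to 0$ of finite groups, whence
\[
\ln\bigl|\ker(H_n(\pr_*))\bigr| = \ln|\ker\mu| + \ln|\ker\nu_n|, \qquad \ln\bigl|\coker(H_n(\pr_*))\bigr| = \ln|\coker\nu_n|.
\]
Now Lemma~\ref{lem:mu} bounds $\ln|\ker\mu|$ by $(r-1)\mg(G)\ln|G| \cdot \mg(H_n(C_*))$, Lemma~\ref{lem:nu} bounds $\ln|\ker\nu_n|$ and $\ln|\coker\nu_n|$ by sums of $\ln|H_q(G;H_{n-p}(C_*))|$ with $p \in \{1,\dots,n\}$ and $q \in \{p,p+1\}$, and the auxiliary estimate turns these into $b_q(\mg(G))\cdot\ln|G|\cdot\mg(H_{n-p}(C_*))$. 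Finally I would substitute the bounds for $\mg(H_n(C_*))$ and $\mg(H_{n-p}(C_*))$ supplied by assertion~\eqref{pro:estimate_for_mg:mg} (with $n-p \le n$), so that every term has the shape ``constant times a power of $\mg(G)$ times $\ln|G|$ times $\mg\bigl(H_{p'}(\IZ \otimes_{\IZ G} C_*)\bigr)$'' with $p' \le n$; this defines $D_0(r,n,p)$ and $D_1(r,n,p)$.

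The only real difficulty is the bookkeeping. One must check that the induction closes, which it does precisely because the universal-coefficient error terms of Lemma~\ref{lem:nu} involve only $H_{n-p}(C_*)$ with $p \ge 1$, hence with index strictly below $n$; and one must verify that the numeric factors $\binom{q+\mg(G)-1}{\mg(G)-1}$ and $(\mg(G)+1)^{r-1}$, together with the finitely many $r$- and $n$-dependent constants produced along the way, can all be absorbed into expressions of the form $C_0(r,n,p)\mg(G)^{C_1(r,n,p)}$ (and similarly for $D_0,D_1$). There is no conceptual obstacle beyond this careful accounting.
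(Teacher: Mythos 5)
Your proposal is correct and follows essentially the same route as the paper: the factorization $H_n(\pr_*) = \nu_n \circ \mu$, the combination of Lemmas~\ref{lem:nu}, \ref{lem:mu}, \ref{lemma:order_of_group_homology} and~\ref{lem:finite_quotients_of_Zm}, induction on $n$ to define $C_0,C_1$, and then the kernel/cokernel bookkeeping for $D_0,D_1$ via the exact sequence coming from the surjectivity of $\mu$. The paper's proof is just the explicit version of this bookkeeping (with concrete binomial estimates such as $\binom{n-p+m}{m-1}\le n^{n+1}m^{n+1}$), so no further comment is needed.
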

\begin{proof}~\eqref{pro:estimate_for_mg:mg} Fix a natural number $r$. We define $C(r,n,p)$ for $p \in \{0,1,2,\ldots, n\}$
  by induction over $n$. In the sequel we abbreviate $m = \mg(G)$.

  Firstly we explain the induction beginning $n = 0$. Then 
  $\nu_0 \colon  \IZ  \otimes_{\IZ G}  H_0(C_*) \to H_0(\IZ  \otimes_{\IZ G} C_*)$ is an
  isomorphism since we always assume $C_n = 0$ for $p \le -1$.  We conclude from
  Lemma~\ref{lem:mu}
  \begin{eqnarray*}
    \mg(H_0(C_*)) 
    & \le & 
    r \cdot (m+1)^{r-1} \cdot \mg\bigl(\IZ  \otimes_{\IZ G} H_0(C_*)\bigr)
    \\
    & = & 
    r \cdot (m +1)^{r-1} \cdot \mg\bigl(H_0(\IZ  \otimes_{\IZ G} C_*)\bigr)
    \\
     & \le & 
    r \cdot 2^{r-1} \cdot  m^{r-1} \cdot \mg\bigl(H_0(\IZ  \otimes_{\IZ G} C_*)\bigr).
  \end{eqnarray*}
  So we put $C_0(r,0,0) = r \cdot 2^{r-1} $ and $C_1(r,0,0) = r-1$.

  The induction beginning from $(n-1)$ for $n \ge 1$ is done as follows.  We
  conclude from Lemma~\ref{lem:mu}
  \begin{eqnarray}
    \mg(H_n(C_*)) 
    & \le &
    r \cdot (m +1)^{r-1} \cdot \mg\bigl(\IZ  \otimes_{\IZ G}  H_n(C_*)\bigr)
    \label{pro:estimate_for_mg:(1)}
    \\
    & \le &
    r \cdot 2^{r-1} \cdot m^{r-1} \cdot \mg\bigl(\IZ  \otimes_{\IZ G}  H_n(C_*)\bigr).
    \nonumber
  \end{eqnarray}
  We estimate using Lemma~\ref{lemma:order_of_group_homology},
  Lemma~\ref{lem:nu}, and Lemma~\ref{lem:finite_quotients_of_Zm}
  \begin{eqnarray}
    \label{pro:estimate_for_mg:(2)}
    \lefteqn{\mg\bigl(\IZ  \otimes_{\IZ G} H_n(C_*)\bigr)} 
    & & 
    \\
    & \le &
    \mg\bigl(H_n(\IZ  \otimes_{\IZ G} C_*) \bigr) + 
    \sum_{p=1}^n \mg\bigl(H_p(G;H_{n-p}(C_*))\bigr) 
    \nonumber
    \\
    & & \hspace{5mm} + \sum_{p=1}^n  \mg\bigl(H_{p+1}(G;H_{n-p}(C_*))\bigr)
    \nonumber
    \\
    & \le &
    \mg\bigl(H_n(\IZ  \otimes_{\IZ G} C_*) \bigr) + 
    \sum_{p=1}^n  \binom{p +m -1}{m-1}\cdot \mg\bigl(H_{n-p}(C_*))\bigr) 
    \nonumber
    \\
    & & \hspace{5mm} + \sum_{p=1}^n  \binom{p  + m}{m-1}\cdot \mg\bigl(H_{n-p}(C_*))\bigr)
    \nonumber
    \\
    & = &
    \mg\bigl(H_n(\IZ  \otimes_{\IZ G} C_*) \bigr)
    \nonumber
    \\
    & & \hspace{5mm} 
    + \sum_{p=0}^{n-1}  \left(\binom{n - p + m -1}{m-1} + \binom{n - p  + m}{m-1}\right)\cdot \mg\bigl(H_{p}(C_*))\bigr).
    \nonumber
  \end{eqnarray}
  Putting~\eqref{pro:estimate_for_mg:(1)} and~\eqref{pro:estimate_for_mg:(2)}
  together yields
  \begin{multline}
    \label{pro:estimate_for_mg:(3a)} 
    \mg\bigl(H_n(C_*)\bigr) 
    \le  
    r \cdot 2^{r-1} \cdot m^{r-1} \cdot \mg\bigl(H_n(\IZ  \otimes_{\IZ G}  C_*) \bigr)
    \\
    + \sum_{p=0}^{n-1}  r \cdot  2^{r-1} \cdot m ^{r-1} \cdot  
    \left(\binom{n - p + m -1 }{m-1} + \binom{n - p + m}{m-1}\right) \cdot \mg\bigl(H_{p}(C_*))\bigr).
 \end{multline}
  We estimate for $p = 0,1,2\ldots ,n$
  \begin{eqnarray}
   \label{binom_estimate_1}
   \binom{n - p + m }{m-1} 
   & = & 
    \frac{(n-p+m)!}{(m-1)! \cdot (n-p+1)!}
   \\
   & \le &
   (n-p+m) \cdot (n-p+m-1) \cdot \;\cdots \; \cdot m
   \nonumber
   \\
   & \le & (n-p+m)^{n-p+1}
   \nonumber
   \\
   & \le &
   (n+m)^{n+1}
   \nonumber
   \\
   & \le  &
   n^{n+1} \cdot m^{n+1}.
  \nonumber
  \end{eqnarray}
  Analogously we get for $p = 0,1,2\ldots ,n$
  \begin{eqnarray}
  \label{binom_estimate_2}
   \binom{n - p + m -1 }{m-1} 
   & \le &
   n^{n+1} \cdot m^{n+1}.
 \end{eqnarray}
  We conclude from~\eqref{pro:estimate_for_mg:(3a)},~\eqref{binom_estimate_1} 
  and~\eqref{binom_estimate_2}
    \begin{multline}
    \label{pro:estimate_for_mg:(3)} 
    \mg\bigl(H_n(C_*)\bigr) 
    \le  
    r \cdot 2^{r-1} \cdot m^{r-1} \cdot \mg\bigl(H_n(\IZ  \otimes_{\IZ G}  C_*) \bigr)
    \\
    + \sum_{p=0}^{n-1}  r \cdot 2^{r-1} \cdot m^{r-1}  \cdot 2 \cdot  n^{n+1} \cdot m^{n+1}  \cdot \mg\bigl(H_{p}(C_*))\bigr).
 \end{multline}
 By induction hypothesis we get for $p = 0,1,2, \ldots, (n-1)$
  \begin{eqnarray}
    \mg(H_p(C_*)) 
    &\le &
    \sum_{i = 0}^p C_0(r,p,i) \cdot m^{C_1(r,p,i)} \cdot \mg\bigl(H_i(\IZ  \otimes_{\IZ G} C_*)\bigr).
    \label{pro:estimate_for_mg:(4)}
  \end{eqnarray}
  Putting~\eqref{pro:estimate_for_mg:(3)} and~\eqref{pro:estimate_for_mg:(4)}
  together yields 
  \begin{eqnarray*}
    \mg\bigl(H_n(C_*)\bigr) 
    & \le & 
    r \cdot  2^{r-1} \cdot m^{r-1} \cdot \mg\bigl(H_n(\IZ  \otimes_{\IZ G} C_*) \bigr)
    \\
    & & \hspace{5mm} 
    + \sum_{p=0}^{n-1}  r \cdot  2^{r-1} \cdot m^{r-1} \cdot  2\cdot  n^{n+1} \cdot m^{n+1} \cdot 
    \\ & & \hspace{10mm} 
    \left(\sum_{i = 0}^p C_0(r,p,i) \cdot m^{C_1(r,p,i)} \cdot \mg\bigl(H_i(\IZ  \otimes_{\IZ G} C_*)\bigr)\right)
    \\
    & \le & 
    r \cdot  2^{r-1} \cdot m^{r-1} \cdot \mg\bigl(H_n(\IZ  \otimes_{\IZ G} C_*) \bigr)
    \\
    & & \hspace{5mm} 
    + \sum_{p = 0}^{n-1} \left(\sum_{i = p}^{n-1}  r \cdot  2^{r}  \cdot  n^{n+1} 
    \cdot  C_0(r,i,p) \cdot m^{n+r+C_1(r,i,p)}  \right) 
    \\ & & \hspace{10mm} 
    \cdot
    \mg\bigl(H_p(\IZ  \otimes_{\IZ G}  C_*)\bigr)
   \\
    & \le & 
    r \cdot  2^{r-1} \cdot m^{r-1} \cdot \mg\bigl(H_n(\IZ  \otimes_{\IZ G} C_*) \bigr)
    \\
    & & \hspace{5mm} 
    + \sum_{p = 0}^{n-1} \left(\sum_{i = p}^{n-1}  r \cdot  2^{r}  \cdot  n^{n+1} \cdot C_0(r,i,p) \right) 
     \\ & & 
    \hspace{10mm} 
    \cdot m^{n+r+\max\{C_1(r,i,p)\mid p \le i \le n-1)\}}  \cdot
    \mg\bigl(H_p(\IZ  \otimes_{\IZ G}  C_*)\bigr).
  \end{eqnarray*}
  Define
  \begin{eqnarray*}
   C_0(r,n,p)  
   & := & \
   \begin{cases}
    \sum_{i = p}^{n-1}  r \cdot  2^{r}  \cdot  n^{n+1}
    \cdot  C_0(r,i,p); & \text{if}\; 0 \le p \le n-1;
    \\
    r \cdot  2^{r-1}  & \text{if}\;  p = n;
  \end{cases}
  \\
   C_1(r,n,p) 
   & := & 
   \begin{cases}
   n+r+\max\{C_1(r,i,p)\mid p \le i \le n-1)\}; & \text{if}\;  0 \le p \le n-1;
   \\    
   r-1 & \text{if}\;  p = n.
 \end{cases}
\end{eqnarray*}
  Then  we get
  \[
  \mg(H_n(C_*)) \le \sum_{p = 0}^n C_0(r,n,p) \cdot m^{C_1(r,n,p)} \cdot\mg\bigl(H_n(\IZ  \otimes_{\IZ G} C_*)\bigr).
  \]
  This finishes the proof of assertion~\eqref{pro:estimate_for_mg:mg}.
  \\[2mm]~\eqref{pro:estimate_for_mg:order} 
  We estimate using
  Lemma~\ref{lemma:order_of_group_homology}, Lemma~\ref{lem:nu},  Lemma~\ref{lem:finite_quotients_of_Zm},
  and~\eqref{binom_estimate_2}
  \begin{eqnarray*}
    |\coker(\nu_n(C_*))| 
    & \le &
    \prod_{p=1}^n \bigl|H_p(G;H_{n-p}(C_*))\bigr|
    \\
    & \le &
    \prod_{p=1}^n |G|^{\binom{p+m-1}{m-1} \cdot \mg(H_{n-p}(C_*))}
    \\
    & = & 
    |G|^{\sum_{p=1}^n (\binom{p+m-1}{m-1}\cdot \mg(H_{n-p}(C_*))}
     \\
    & \le  & 
    |G|^{\sum_{p=1}^n n^{n+1} \cdot m^{n+1}\cdot \mg(H_{n-p}(C_*))}.
  \end{eqnarray*}

  This implies together with assertion~\eqref{pro:estimate_for_mg:mg}
  \begin{eqnarray}
    \label{pro:estimate_for_mg:(6)}
    & & 
    \\
    \lefteqn{\ln\bigl(|\coker(\nu_n(C_*))|\bigr)} & & 
    \nonumber
    \\
    & \le &
    \ln(|G|) \cdot \sum_{p=1}^n  n^{n+1} \cdot m^{n+1}\cdot \mg(H_{n-p}(C_*))
    \nonumber
    \\
    & \le &
    \ln(|G|) \cdot \sum_{p=1}^n n^{n+1} \cdot m^{n+1}  \cdot 
    \left(\sum_{i = 0}^{n-p} C_0(r,n-p,i) \cdot m^{C_1(r,n-p,i)}
    \mg\bigl(H_i(\IZ  \otimes_{\IZ G} C_*)\bigr)\right)
    \nonumber
    \\
    & = & 
    \ln(|G|) \cdot   \sum_{p= 0}^{n-1} \left(\sum_{i = 1}^{n-p} n^{n+1} \cdot m^{n+1} \cdot C_0(r,n-i,p) \cdot m^{C_1(r,n-i,p)} \right) \cdot
      \mg\bigl(H_p(\IZ  \otimes_{\IZ G} C_*)\bigr)
    \nonumber
    \\
    & = & 
    \sum_{p= 0}^{n-1} \left(\sum_{i = 1}^{n-p} n^{n+1} \cdot  C_0(r,n-i,p) \cdot \ln(|G|) \cdot   m^{n+1 + C_1(r,n-i,p)} \right) \cdot 
      \mg\bigl(H_p(\IZ  \otimes_{\IZ G} C_*)\bigr).
    \nonumber 
  \end{eqnarray}
  Analogously we get
  \begin{eqnarray}
    \label{pro:estimate_for_mg:(7)} & & 
    \\
    \lefteqn{\ln\bigl(|\ker(\nu_n(C_*))|\bigr)}
    \nonumber
    \\
    & \le & 
    \sum_{p= 0}^{n-1} \left(\sum_{i = 1}^{n-p} n^{n+1} \cdot  C_0(r,n-i,p) \cdot \ln(|G|) \cdot  m^{n+1+C_1(r,n-i,p)} \right) \cdot 
      \mg\bigl(H_p(\IZ  \otimes_{\IZ G} C_*)\bigr).
    \nonumber
  \end{eqnarray}
  We estimate using Lemma~\ref{lem:mu} and  assertion~\eqref{pro:estimate_for_mg:mg}.
  \begin{eqnarray}
     \label{pro:estimate_for_mg:(8)} 
    & & 
    \\
    \lefteqn{\ln\bigl(\bigl|\ker(\mu(H_n(C_*))\bigr|\bigr)}
    \nonumber
    \\ 
     & \le &
    \ln(|G|) \cdot (r-1) \cdot m \cdot \mg(H_n(C_*))
    \nonumber
    \\
    & \le &
    \ln(|G|) \cdot (r-1) \cdot m \cdot \sum_{p = 0}^n C_0(r,n,p) \cdot m^{C_1(m,n,p)} \cdot
    \mg\bigl(H_p(\IZ  \otimes_{\IZ G} C_*)\bigr)
    \nonumber
    \\
    & \le &
    \sum_{p = 0}^n (r-1) \cdot C_0(r,n,p) \cdot \ln(|G|) \cdot m^{1 + C_1(m,n,p)} \cdot
    \mg\bigl(H_p(\IZ  \otimes_{\IZ G} C_*)\bigr).
    \nonumber
    \nonumber
  \end{eqnarray}
  Since the canonical map $H_n(\pr_*) \colon H_n(C_*) \to H_n(\IZ  \otimes_{\IZ G} C_*)$ 
agrees with the composition $\nu_n(C_*) \circ \mu(H_n(C_*))$ and
  $\mu_n(H_n(C_*))$ is surjective, we obtain an exact sequence
  \begin{eqnarray*}
    & 0 \to \ker\bigl(\mu(H_n(C_*))\bigr) \to \ker(H_n(\pr_*)) 
    \to \ker\bigl(\nu(C_*))\bigr)  \to 0, &
  \end{eqnarray*}
  and an isomorphism
  \begin{eqnarray*}
    \coker(H_n(\pr_*)) & \cong & \coker\bigl(\nu(C_*))\bigr).
  \end{eqnarray*}
  We conclude
  \begin{eqnarray*}
    \ln\left(\bigl|\ker(H_n(\pr_*))\bigr|\right)
    & \le &
    \ln\left(\bigl|\ker\bigl(\mu(H_n(C_*))\bigr)\bigr|\right)  
    + \ln\left(\bigl|\ker\bigl(\nu_n(C_*))\bigr)\bigr|\right);
    \\
    \ln\left(\bigl|\coker(H_n(\pr_*))\bigr|\right) 
    & = & 
    \ln\left(\bigl|\coker\bigl(\nu_n(C_*))\bigr)\bigr|\right).
  \end{eqnarray*}
  This together with~\eqref{pro:estimate_for_mg:(6)},
  \eqref{pro:estimate_for_mg:(7)}, and~\eqref{pro:estimate_for_mg:(8)} implies
  \begin{eqnarray*}
    \lefteqn{\ln\left(\bigl|\ker(H_n(\pr_*))\bigr|\right)} 
    & & 
    \\
    &\le &
    \sum_{p = 0}^n (r-1) \cdot C_0(r,n,p) \cdot \ln(|G|) \cdot m^{1 + C_1(m,n,p)} \cdot
    \mg\bigl(H_p(\IZ  \otimes_{\IZ G} C_*)\bigr)
    \\
    & & 
    \hspace{7mm} +   
    \sum_{p= 0}^{n-1} \left(\sum_{i = 1}^{n-p} n^{n+1} \cdot  C_0(r,n-i,p) \cdot m^{n+1+C_1(r,n-i,p)} \right) \cdot \ln(|G|)
    \\
    & & 
    \hspace{14mm}\cdot \mg\bigl(H_p(\IZ  \otimes_{\IZ G} C_*)
    \\
    & \le &
    \sum_{p = 0}^n (r-1) \cdot C_0(r,n,p) \cdot \ln(|G|) \cdot m^{1 + C_1(m,n,p)} \cdot
    \mg\bigl(H_p(\IZ  \otimes_{\IZ G} C_*)\bigr)
    \\
    & & \hspace{7mm} +   
    \sum_{p= 0}^{n-1} \left(\sum_{i = 1}^{n-p}  n^{n+1} \cdot  C_0(r,n-i,p) \right) \cdot  \ln(|G|)  
    \\
   & & \hspace{14mm} \cdot m^{n+1+\max\{C_1(r,n-i,p)\mid i = 1, \ldots n-p\}}  \cdot \mg\bigl(H_p(\IZ  \otimes_{\IZ G} C_*)\bigr),
 \end{eqnarray*}
 and
  \begin{eqnarray*}
    \lefteqn{\ln\bigl(|\coker(H_n(\pr_*))|\bigr)}
    & &
     \\
    & \le & 
     \sum_{p= 0}^{n-1} \left(\sum_{i = 1}^{n-p} n^{n+1} \cdot  C_0(r,n-i,p) \right)  \cdot   \ln(|G|)
     \\
     & &
     \hspace{7mm}  \cdot m^{n+1 + \max\{C_1(r,n-i,p)\mid i = 1, \ldots n-p\}}  \cdot  \mg\bigl(H_p(\IZ  \otimes_{\IZ G} C_*)\bigr).
   \end{eqnarray*}
   Now assertion~\eqref{pro:estimate_for_mg:order} follows for the obvious choices of $D_0(r,n,p)$ and $D_1(r,n,p)$. 
  This finishes the proof of Proposition~\ref{pro:estimate_for_mg}.
\end{proof}

\subsection{Nilpotent homology}
\label{subsec:nilpotent_homology}

In this subsection we explain how the condition that our $CW$-complex under consideration fibers in a specific
way enters the proof. Essentially the condition about Gottlieb's subgroup of the fundamental group ensures
that later we get on each level nilpotent homology groups what will be needed in order to apply Proposition~\ref{pro:estimate_for_mg}.

\begin{lemma} \label{lem:fibrations} 
  Let $F \xrightarrow{j} X \xrightarrow{f} B$ be a fibration, 
  where $F$ and $B$  are  connected $CW$-complexes. 
  Let $d$ be the dimension of $B$.  Consider an epimorphism
  $\phi \colon \pi_1(X) \to G$. Let $A \subseteq G$ be the
  image of $G_1(F)$ under the composite $\phi \circ \pi_1(j) \colon \pi_1(F) \to G$.  Let 
  $p \colon \overline{X} \to X$ be the $G$-covering associated to $\phi$.

  Then $A$ is a normal abelian subgroup of $G$ and the induced $A$-action on $\overline{X}$ turns 
  $H_n(\overline{X})$  into a $\IZ A$-module which is nilpotent of filtration 
  length $\le d+1$.
\end{lemma}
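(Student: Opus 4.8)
The plan is to extract everything from the long exact homotopy sequence of the fibration together with the characterization of Gottlieb's subgroup in Lemma~\ref{lem:characterizing_G_1(F)}~\eqref{lem:characterizing_G_1(F):universal_covering}. First I would note that $G_1(F)$ lies in the center of $\pi_1(F)$ by Lemma~\ref{lem:characterizing_G_1(F)}~\eqref{lem:characterizing_G_1(F):aspherical}, so its image $A = \phi(\pi_1(j)(G_1(F)))$ is an abelian subgroup of $G$. For normality: the fibre transport of $f$ gives, up to homotopy, an action of $\pi_1(B)$ on $F$ (more precisely on its homotopy type, and hence on $\pi_1(F)$) which fixes $G_1(F)$ setwise — this is essentially the naturality of Gottlieb's construction under homotopy self-equivalences, Lemma~\ref{lem:characterizing_G_1(F)}~\eqref{lem:characterizing_G_1(F):homotopy_invariance}. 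Since $\phi$ is an epimorphism and $\pi_1(X)$ surjects onto $\pi_1(B)$, conjugation by any element of $G$ is induced by conjugation by an element of $\pi_1(X)$, which in turn is realized by an element of $\pi_1(F)$ composed with a fibre-transport automorphism; both preserve $G_1(F)$, so $A$ is normal in $G$.

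The heart of the argument is the nilpotency statement, and here I would argue by induction over a skeletal (Postnikov-type, or rather cell-by-cell of $B$) filtration of $\overline{X}$. Pull back the skeletal filtration $B^{(0)} \subseteq B^{(1)} \subseteq \cdots \subseteq B^{(d)} = B$ through $f$ to get $X_q := f^{-1}(B^{(q)})$ and then to the covering $\overline{X}_q := p^{-1}(X_q)$. The key input is that $A$ acts on each $\overline{X}_q$ compatibly, and on the fibre $\overline{F}$ over the basepoint (the covering of $F$ with deck group the image $H$ of $\pi_1(F)$ in $G$, restricted appropriately) the translation by an element of $A \cap H$ is $\pi_1(F)$-homotopic to the identity by Lemma~\ref{lem:characterizing_G_1(F)}~\eqref{lem:characterizing_G_1(F):universal_covering}. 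Concretely, $\overline{X}$ over $B^{(0)}$ is (up to the $G/H$-translates) a copy of $\overline{F}$, and on $H_n(\overline{F})$ the $A$-action factors through translations that are homotopic to the identity, hence acts trivially; so $H_n(\overline{X}_0)$ is a nilpotent $\IZ A$-module of filtration length $\le 1$. Passing from $\overline{X}_{q-1}$ to $\overline{X}_q$ attaches cells indexed by the $q$-cells of $B$, each contributing (via the fibre bundle structure over a cell) a copy of $\overline{F} \times (\text{cell})$; the Mayer–Vietoris sequence relating $H_*(\overline{X}_{q-1})$, these new pieces, and $H_*(\overline{X}_q)$ is a sequence of $\IZ A$-modules. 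One checks that the "new" pieces again carry a nilpotent (filtration length $\le 1$) $A$-action — the translation argument is the same — and then Lemma~\ref{lem_filtration_length}~\eqref{lem_filtration_length:exact_sequence} lets the filtration length grow by at most $1$ at each of the $d$ inductive steps, starting from $\le 1$, giving $\le d+1$ in the end.

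The main obstacle I anticipate is bookkeeping the $\IZ A$-module structure through the Mayer–Vietoris (or skeletal long exact) sequences: one must be careful that the identifications of the "fibre over a cell" pieces are $A$-equivariant and that the connecting maps are $\IZ A$-linear, and that the homotopy realizing triviality of the $A$-translation on $H_*(\overline{F})$ can be chosen compatibly across the pieces — this is where Lemma~\ref{lem:characterizing_G_1(F)}~\eqref{lem:characterizing_G_1(F):universal_covering} (which says the homotopy exists $\pi_1(F)$-equivariantly, hence descends to $\overline{F}$) is essential rather than cosmetic. Once that equivariance is in place, the nilpotency length estimate is immediate from Lemma~\ref{lem_filtration_length}, and normality plus abelianness of $A$ are the easy bookends.
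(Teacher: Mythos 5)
Your proposal is correct and follows essentially the same route as the paper's proof: induction over the skeleta of $B$, pulling the attaching pushouts back to $\overline{X}$ so that the new pieces are of the form $G\times_H \overline{F}\times (D^q,S^{q-1})$, triviality of the $A$-action on $H_*(\overline{F})$ via Lemma~\ref{lem:characterizing_G_1(F)}~\eqref{lem:characterizing_G_1(F):universal_covering}, and Lemma~\ref{lem_filtration_length} to let the filtration length grow by at most one per skeletal step, with normality of $A$ obtained, as in the paper, from fibre transport together with the homotopy invariance of $G_1$. The equivariance concern you flag is settled by precisely the one-line observation the paper makes: since $A$ is normal in $G$, the $A$-action on $\IZ G\otimes_{\IZ H}H_*(\overline{F})$ twists into the (trivial) $A$-action on $H_*(\overline{F})$ and is therefore trivial.
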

\begin{proof}
  We use induction over $d$. The induction beginning $d = -1$ is trivial since
  then $\overline{X}$ is empty.  The induction step from $(d-1)$ to $d \ge 0$ is
  done as follows.  Since $B$ is connected, it is homotopy equivalent to a
  finite $CW$-complex with precisely one $0$-cell.  Hence we can assume without
  loss of generality that each skeleton $B_d$ is connected.  Choose a pushout
  \[
  \xycomsquareminus{\coprod_{i \in I} S^{d-1}}{\coprod_{i \in I}
    q_i}{B_{d-1}}{}{}{\coprod_{i \in I} D^d}{\coprod_{i \in I} Q_i}{B}
  \]
  Since $f$ is a fibration and $p$ is a $G$-covering, we obtain by the pullback
  construction a $G$-pushout with a cofibration as left vertical arrow
  (see~\cite[Lemma 1.26]{Lueck(1989)})
  \[
  \xycomsquareminus{\coprod_{i \in I} q_i^*\overline{X}} {\coprod_{i \in I}
    \overline{q_i}}{(f \circ p)^{-1}(B_{d-1})} {}{} {\coprod_{i \in I}
    Q_i^*\overline{X}}{\coprod_{i \in I} \overline{Q_i}}{\overline{X}}
  \]
  Let $w \in \pi_1(X)$. Then the pointed fiber transport along
  $w$ yields a pointed homotopy equivalence $\sigma(w) \colon F \to F$ which is
  unique up to pointed homotopy equivalence
  (see~\cite[Section~6]{Lueck(1986)}). The induced map $\pi_1(\sigma(w))$  sends $G_1(F)$ to $G_1(F)$ by
  Lemma~\ref{lem:characterizing_G_1(F)}~%
\eqref{lem:characterizing_G_1(F):homotopy_invariance}.  The following diagram commutes
  \[
  \xycomsquareminus{\pi(F)}{\pi_1(\sigma(w))}{\pi_1(F)}{\pi_1(j)}{\pi_1(j)}
  {\pi_1(X)}{c_w}{\pi_1(X)}
  \]
  where $c_w$ is conjugation with $w$. Since $\phi \colon \pi_1(X) \to G$ is by
  assumption surjective, this implies that $A \subseteq G$ is normal.
  
  Let $H$ be the image of $\phi \circ \pi_1(j) \colon \pi_1(F) \to G$. Obviously
  $A \subseteq H$.  Let $\overline{F} \to F$ be the covering associated to the epimorphism
  $\pi_1(F) \to H$ induced by $\phi \circ \pi_1(j)$.
  Since $D^d$ is contractible, we obtain for each $i \in I$ a
  $G$-homotopy equivalence of pairs
  \[
  \bigl(Q_i^*\overline{X},q_i^*\overline{X}\bigr)
  \xrightarrow{\simeq} G \times _H\overline{F} \times (D^d,S^{d-1}).
  \]
  Hence we obtain a long
  exact sequence of $\IZ G$-modules
  \begin{multline*}
    \cdots \to \bigoplus_{i \in I} \IZ G \otimes_{\IZ H} H_{k}(\overline{F}) 
    \to H_{d+k}\bigl((f\circ p)^{-1}(B_{d-1})\bigr) \to H_{d+ k}(\overline{X})
    \\
    \to  \bigoplus_{i \in I} \IZ G \otimes_{\IZ H} H_k(\overline{F}) \to \cdots.
  \end{multline*}
  Now we view this as an exact sequence of $\IZ A$-modules. By induction
  hypothesis $H_{n}\bigl((f \circ p)^{-1}(B_{d-1})\bigr)$ is nilpotent of filtration
  length $\le d$ for all $n$.  The $A$-operation on $H_{l}(\overline{F})$ is
  trivial for all $l$, since for each $w \in \pi_1(F)$ the $\pi_1(F)$-map
  $l_w \colon \widetilde{F} \to \widetilde{F}$ given by multiplication with
  $w$ is $\pi_1(F)$-equivariantly homotopic to the identity because of
  Lemma~\ref{lem:characterizing_G_1(F)}~\eqref{lem:characterizing_G_1(F):universal_covering}.
  Hence the left $A$-action on $\IZ G \otimes_{\IZ H} H_{n}(\overline{F})$ 
  is trivial since $A$ is normal in $G$. This implies that 
  $\IZ G \otimes_{\IZ H} H_{n}(\overline{X})$ is a nilpotent $\IZ A$-module of filtration length 
  $\le 1$. We conclude from Lemma~\ref{lem_filtration_length}
  that $H_{n}(\overline{X})$ is a nilpotent $\IZ A$-module of
  filtration length $\le (d+1)$ for all $n$.
\end{proof}

\subsection{A priori bounds}
\label{subsec:A_nriori_bounds}

If the sequences under consideration converge,
they have to be bounded what we prove next.

\begin{notation}\label{not:C[i]} Let $C_*$ be a finite based free chain complex with differential $c_*$.
Denote by $(C[i]_*,c[i]_*)$ the finite based free $\IZ$-chain complex 
$\IZ \otimes_{\IZ G_i} C_* = \IZ[G/G_i] \otimes_{\IZ G} C_*$.
\end{notation}

\begin{lemma}\label{lem:boundedness_of_sequences}
    Consider a finite based free $\IZ G$-chain complex $C_*$.  Then
    there is a constant $\Lambda > 0$ satisfying:

  \begin{enumerate}

  \item \label{lem:boundedness_of_sequences:mg}
   For all $i \in I$ and $n \ge 0$ we have
   \[
    0 \le \frac{\mg(H_n(C[i]_*))}{[G:G_i]}
     \le \Lambda;
    \]

  \item \label{lem:boundedness_of_sequences:det(c[i]_n)}
   For all $i \in I$ and $n \ge 0$ we have
   \[
    0 \le \frac{\ln\bigl({\det}_{\caln(\{1\})}\bigl(c[i]_n^{(2)}\bigr)\bigr)}{[G:G_i]}
     \le \Lambda;
    \]

  \item   \label{lem:boundedness_of_sequences:IZ-torsion}
   For all $i \in I$ and $n \ge 0$ we have
   \[ \mg\bigl(H_n(C[i]_*)\bigr) \le \Lambda,
   \]
   and 
   \[
    0 \le \frac{\ln\left(\bigl|\tors(H_n(C[i]_*))\bigr|\right)}{[G:G_i]}
     \le \Lambda;
    \]

   \item \label{lem:boundedness_of_sequences:alpha}
    For all $i \in I$ and $n \ge 0$ we have
   \begin{eqnarray*}
   & - \Lambda \le \frac{\ln\bigl({\det}_{\caln(\{1\})}(\alpha[i]_n)\bigr)}{[G:G_i]} \le  \Lambda.
   \end{eqnarray*}
   where $\alpha[i]_n$ is the map associated to $C[i]_*$  in~\eqref{iso_alpha}.
 \end{enumerate}
\end{lemma}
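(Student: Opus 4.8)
\textbf{The plan} is to observe that all four quantities are attached to the finite based free $\IZ$-chain complex $C[i]_* = \IZ[G/G_i]\otimes_{\IZ G}C_*$ of Notation~\ref{not:C[i]}, to bound each of them (for each fixed $n$ --- only finitely many matter, since $C_*$ is finite) by a constant times $[G:G_i]$ with the constant depending only on $C_*$, and then to divide by $[G:G_i]$ and take $\Lambda$ to be the maximum of the finitely many constants produced. Two bookkeeping facts will be used throughout. First, $\dim_\IZ(C[i]_n) = [G:G_i]\cdot\dim_{\IZ G}(C_n)$, so with $D:=\max_n\dim_{\IZ G}(C_n)$ we have $\dim_\IZ(C[i]_n)\le[G:G_i]\cdot D$. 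Second, in the induced $\IZ$-basis the differential $c[i]_n$ is represented by an integer matrix all of whose entries are at most some $P\ge 2$ in absolute value and which has at most $P$ nonzero entries per row and per column, where $P$ depends only on $C_*$; this follows because $c_n$ is given over $\IZ G$ by a fixed finite matrix involving finitely many group elements with integer coefficients, and left multiplication by a group element permutes $G/G_i$.

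\textbf{For~\eqref{lem:boundedness_of_sequences:mg}, the first line of~\eqref{lem:boundedness_of_sequences:IZ-torsion}, and~\eqref{lem:boundedness_of_sequences:det(c[i]_n)}} the arguments will be elementary. Since $H_n(C[i]_*)$ is a sub-quotient of the free abelian group $C[i]_n$, Lemma~\ref{lem:properties_of_mg}~\eqref{lem:properties_of_mg:subadditivity} gives $\mg(H_n(C[i]_*))\le\dim_\IZ(C[i]_n)\le[G:G_i]\cdot D$, which handles the $\mg$-bounds; the lower bounds are trivial. For~\eqref{lem:boundedness_of_sequences:det(c[i]_n)}, the lower bound is ${\det}_{\caln(\{1\})}(c[i]_n^{(2)})\ge 1$, which holds because the trivial group satisfies the Determinant Conjecture (as recalled in the proof of Lemma~\ref{lem:Fuglede-Kadison_and_tors_for_G_is_trivial}), so its logarithm is $\ge 0$; for the upper bound I would use that ${\det}_{\caln(\{1\})}(c[i]_n^{(2)})$ is the product of the nonzero singular values of the integer matrix of $c[i]_n$, hence is at most the $\rk(c[i]_n)$-th power of its operator norm, and that the operator norm is at most $P^2$ by the Schur test $\|A\|\le(\max_j\sum_k|a_{jk}|)^{1/2}(\max_k\sum_j|a_{jk}|)^{1/2}$, while $\rk(c[i]_n)\le\dim_\IZ(C[i]_n)\le[G:G_i]\cdot D$. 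This gives $\ln{\det}_{\caln(\{1\})}(c[i]_n^{(2)})\le 2[G:G_i]\cdot D\cdot\ln P$.

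\textbf{For the torsion bound in~\eqref{lem:boundedness_of_sequences:IZ-torsion}} I would use that the canonical short exact sequence $0\to H_n(C[i]_*)\to\coker(c[i]_{n+1})\to\im(c[i]_n)\to 0$ has free right-hand term and hence splits, so $\tors(H_n(C[i]_*))\cong\tors(\coker(c[i]_{n+1}))$; then Lemma~\ref{lem:Fuglede-Kadison_and_tors_for_G_is_trivial} applied to $c[i]_{n+1}$ gives $|\tors(H_n(C[i]_*))|\le{\det}_{\caln(\{1\})}(c[i]_{n+1}^{(2)})$, and the previous step bounds the right side. \textbf{For~\eqref{lem:boundedness_of_sequences:alpha}}, the delicate assertion, I would first identify, using the description of the Hilbert structure on $H_n^{(2)}(C[i]_*^{(2)})$ given in Subsection~\ref{subsec:Relating_L2-torsion_and_Z-torsion} (the Hilbert-quotient structure with respect to $\ker(c[i]_n^{(2)})\to H_n^{(2)}(C[i]_*^{(2)})$), the determinant ${\det}_{\caln(\{1\})}(\alpha[i]_n)$ with the reciprocal of the Fuglede--Kadison determinant of the canonical projection $\ker(c[i]_n)\to H_n(C[i]_*)_f$, where $\ker(c[i]_n)$ carries the inner product restricted from $C[i]_n^{(2)}$. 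Feeding this into the determinant manipulations in the proofs of Lemma~\ref{lem:Fuglede-Kadison_and_tors_for_G_is_trivial} and Lemma~\ref{lem:rho(2)-rhoZ}, applied to the complex $\ker(c[i]_{n+1})\hookrightarrow C[i]_{n+1}\xrightarrow{c[i]_{n+1}}\ker(c[i]_n)\to H_n(C[i]_*)_f$ whose only nonzero homology is $\tors(H_n(C[i]_*))$, would express $\ln{\det}_{\caln(\{1\})}(\alpha[i]_n)$ as a finite signed sum of terms of the shapes $\ln{\det}_{\caln(\{1\})}(c[i]_k^{(2)})$ with $k\in\{n,n+1\}$, $\ln|\tors(H_n(C[i]_*))|$, and $\ln{\det}_{\caln(\{1\})}$ of an inclusion of cycle/boundary lattices (this last being sandwiched between $0$ and $\ln{\det}_{\caln(\{1\})}(c[i]_{n+1}^{(2)})$ by Lemma~\ref{lem:Fuglede-Kadison_and_tors_for_G_is_trivial}). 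Each ingredient then lies in $[\,0,\,2[G:G_i]\cdot D\cdot\ln P\,]$ by the estimates already established, and dividing by $[G:G_i]$ gives the two-sided bound, with $\Lambda$ enlarged to absorb the bounded number of coefficients.

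\textbf{The main obstacle} will be~\eqref{lem:boundedness_of_sequences:alpha}: it is the only assertion in which negative contributions can occur, so one has to play the lower bounds from the Determinant Conjecture for the trivial group off against the upper bounds from~\eqref{lem:boundedness_of_sequences:det(c[i]_n)} simultaneously, and the cycle sublattices $\ker(c[i]_n)$ do not carry any $\IZ$-basis realizing their $L^2$-inner product, which is why the estimate has to be routed through the Fuglede--Kadison determinant identities of Lemma~\ref{lem:Fuglede-Kadison_and_tors_for_G_is_trivial} rather than computed by a direct Hadamard-type argument --- a single Hadamard bound on a maximal minor would only yield an $O\bigl([G:G_i]\cdot\ln[G:G_i]\bigr)$ estimate, which is useless after division by $[G:G_i]$. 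The other assertions, by contrast, reduce to Lemma~\ref{lem:properties_of_mg}~\eqref{lem:properties_of_mg:subadditivity}, the Determinant Conjecture, and the uniform entry-and-sparsity bound on the differentials.
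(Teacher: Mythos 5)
Your handling of assertions~\eqref{lem:boundedness_of_sequences:mg}--\eqref{lem:boundedness_of_sequences:IZ-torsion} is essentially the paper's proof: the paper also gets the $\mg$-bounds from Lemma~\ref{lem:properties_of_mg}~\eqref{lem:properties_of_mg:subadditivity} applied to the sub-quotient $H_n(C[i]_*)$ of $C[i]_n$, the lower bounds from the Determinant Conjecture for the trivial group, the upper bound in~\eqref{lem:boundedness_of_sequences:det(c[i]_n)} from ${\det}_{\caln(\{1\})}(c[i]_n^{(2)})\le \|c[i]_n^{(2)}\|^{\dim_{\IZ}(C[i]_n)}$, and the torsion bound from $\tors(H_n(C[i]_*))\cong\tors(\coker(c[i]_{n+1}))$ together with Lemma~\ref{lem:Fuglede-Kadison_and_tors_for_G_is_trivial}. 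The only difference is that you re-prove the uniform bound on $\|c[i]_n^{(2)}\|$ by the entry-and-sparsity (Schur test) argument, where the paper simply quotes \cite[Lemma~13.33 on page~466]{Lueck(2002)}; that is the same content. (Note that the unnormalized inequality $\mg(H_n(C[i]_*))\le\Lambda$ in assertion~\eqref{lem:boundedness_of_sequences:IZ-torsion} is a misprint --- it fails already for $C_*=\IZ G$ concentrated in degree $0$ --- and, like the paper's own proof, you only establish the normalized bound, which is all that is used later.)

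For assertion~\eqref{lem:boundedness_of_sequences:alpha} you take a genuinely different route. The paper compares the lattice of integral harmonic chains $\ker(\Delta[i]_n)$ with both $C[i]_n$ and $H_n(C[i]_*)_f$, obtains ${\det}_{\caln(\{1\})}(\alpha[i]_n)\cdot{\det}_{\caln(\{1\})}(u[i]^{(2)})={\det}_{\caln(\{1\})}(j[i]^{(2)})$, bounds $|\coker(u[i])|$ by $\bigl({\det}_{\caln(\{1\})}(c[i]_{n+1}^{(2)})\bigr)^2$ through a diagram chase plus Lemma~\ref{lem:Fuglede-Kadison_and_tors_for_G_is_trivial}, and uses $\ln{\det}_{\caln(\{1\})}(\Delta[i]_n^{(2)})=2\bigl(\ln{\det}_{\caln(\{1\})}(c[i]_n^{(2)})+\ln{\det}_{\caln(\{1\})}(c[i]_{n+1}^{(2)})\bigr)$; this is where the factor $4$ in its choice of $\Lambda$ comes from. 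Your identification of ${\det}_{\caln(\{1\})}(\alpha[i]_n)$ with the reciprocal determinant of the projection $\beta$ from $\ker(c[i]_n^{(2)})$ (sub-Hilbert structure) onto $\bigl(H_n(C[i]_*)_f\bigr)^{(2)}$ is correct, but you should justify it: $\alpha[i]_n\circ\beta$ is the coisometric quotient map, whose determinant is $1$, and \cite[Theorem~3.14~(1) on page~128]{Lueck(2002)} applies since $\beta$ has dense image and $\alpha[i]_n$ is injective. Your subsequent bookkeeping is only sketched, and the four-term-complex version has the wrinkle that its middle differential is $c[i]_{n+1}$ rebased to a chosen $\IZ$-basis of $\ker(c[i]_n)$, so its Fuglede--Kadison determinant is not literally ${\det}_{\caln(\{1\})}(c[i]_{n+1}^{(2)})$ (the discrepancy is exactly ${\det}_{\caln(\{1\})}(j_k^{(2)})$, which Lemma~\ref{lem:Fuglede-Kadison_and_tors_for_G_is_trivial} controls, so it can be patched). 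In fact your identification closes more directly: an orthogonal covolume computation inside $\ker(c[i]_n^{(2)})$ gives ${\det}_{\caln(\{1\})}(\alpha[i]_n)={\det}_{\caln(\{1\})}(j_k^{(2)})/{\det}_{\caln(\{1\})}(\pr_c^{(2)})$ in the notation of Lemma~\ref{lem:Fuglede-Kadison_and_tors_for_G_is_trivial} applied to $u=c[i]_n$ and $u=c[i]_{n+1}$ respectively, whence ${\det}_{\caln(\{1\})}(c[i]_{n+1}^{(2)})^{-1}\le{\det}_{\caln(\{1\})}(\alpha[i]_n)\le{\det}_{\caln(\{1\})}(c[i]_n^{(2)})$. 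This avoids the Laplacian altogether and yields a sharper two-sided bound than the paper's, so your normalization of $\Lambda$ suffices; what each approach buys is, on the paper's side, only the standard harmonic description of the $L^2$-metric, and on yours, a shorter estimate at the price of carefully checking the multiplicativity hypotheses and the coisometry statement above.
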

\begin{proof}
We conclude from~\cite[Lemma~13.33 on page~466]{Lueck(2002)} that 
for every natural number $n$ there exists a constant 
$K_n \ge  0$ such that 
\begin{eqnarray}
||c[i]_n^{(2)}|| & \le & K_n
\label{order_of_c_by_K_n}
\end{eqnarray}
holds for each $i \in I$. Put 
\begin{eqnarray*}
\Lambda_0 
& := & 
\sum_{n = 0}^{\dim(C_*)} \max\{\ln(K_n),1\} \cdot \dim_{\IZ G}(C_n);
\\
\Lambda
& = &
4 \cdot \Lambda_0.
\end{eqnarray*}
Now we can prove the various assertions appearing in
Lemma~\ref{lem:boundedness_of_sequences}.
\\[1mm]~\eqref{lem:boundedness_of_sequences:mg}
We conclude from Lemma~\ref{lem:properties_of_mg}~\eqref{lem:properties_of_mg:subadditivity}
\begin{eqnarray*}
\frac{\mg(H_n(C_*))}{[G:G_i]} 
& \le & 
\frac{\dim_{\IZ}\bigl(\dim_{\IZ}(C[i]_n)\bigr)}{[G:G_i]}
\\
& = &
\dim_{\IZ G}(C_n)
\\
&\le &
\Lambda_0.
\end{eqnarray*}%
\eqref{lem:boundedness_of_sequences:det(c[i]_n)}
We conclude from from~\eqref{order_of_c_by_K_n}
\begin{eqnarray*}
{\det}_{\caln(\{1\})}\bigl(c[i]_n^{(2)}\bigr) 
& \le &
||c[i]_n^{(2)}||^{\dim_{\caln(\{1\}}(C[i]_n^{(2)})}
\nonumber\\
& \le &
(K_n)^{\dim_{\IZ}(C[i]_n)}.
\end{eqnarray*}
We have $1 \le {\det}_{\caln(\{1\}}\bigl(c[i]_n^{(2)}\bigr)$
since the trivial groups satisfy the Determinant Conjecture 
because of~\cite[Lemma~13.12 on page~459]{Lueck(2002)}.
Hence we get
\begin{eqnarray*}
& 1 \le {\det}_{\caln(\{1\})}\bigl(c[i]_n^{(2)}\bigr) 
\le 
(K_n)^{\dim_{\IZ}(C[i]_n)}.&
\end{eqnarray*}
Since 
$\dim_{\IZ G}(C_n) = \frac{\dim_{\IZ}(C[i]_n)}{[G:G_i]}$,
this implies for all $i \in I$ and $n \ge 0$
\begin{eqnarray}
& 0 \le \frac{\ln({\det}_{\caln(\{1\})}(c[i]_n^{(2)}))}{[G:G_i]} 
\le \Lambda_0 
\le \Lambda.
&
\label{bound_by_K_n}
\end{eqnarray}
\eqref{lem:boundedness_of_sequences:IZ-torsion}
We conclude from 
Lemma~\ref{lem:Fuglede-Kadison_and_tors_for_G_is_trivial}
and~\eqref{bound_by_K_n} 
\begin{eqnarray}
\frac{\ln\left(\bigl|\tors(H_n(C[i]_*))\bigr|\right)}{[G:G_i]}
& = &
\frac{\ln\left(\bigl|\tors(\coker(c[i]_{n+1}))\bigr|\right)}{[G:G_i]}
\label{bound_for_torsH_n_by_K_n}
\\
 & \le &
\frac{\left|\ln\bigl({\det}_{\caln(\{1\})}\bigl(c[i]_{n+1}^{(2)}\bigr)\bigr)\right|}{[G:G_i]}
\nonumber
\\
& \le &
\Lambda_0.
\nonumber
\end{eqnarray}
This implies for all $i \in I$ and $n \ge 0$
\[
    0 \le \frac{\ln\left(\bigl|\tors(H_n(C[i]_*))\bigr|\right)}{[G:G_i]} \le \Lambda.
 \]
\eqref{lem:boundedness_of_sequences:alpha} 
 Let $\Delta_n \colon C_n \to C_n$ be
 the combinatorial Laplacian of the finite based free $\IZ G$-chain complex
 $C_*$. Then $\Delta[i]_n \colon C[i]_n \to C[i]_n$ is the combinatorial
 Laplacian of the finite based free $\IZ$-chain complex $C[i]_*$, and
 $\Delta[i]_n^{(2)} \colon C[i]_n^{(2)} \to C[i]_n^{(2)}$ is the same as the
 Laplacian on the $\caln(\{1\})$-Hilbert chain complex $C[i]_*^{(2)}$. Let 
 $j[i] \colon \ker(\Delta[i]_n) \to C[i]_n$ be the inclusion. It induces a map
 $j[i]^{(2)} \colon \ker(\Delta[i]_n)^{(2)} \to C[i]_n^{(2)}$ whose image is
 contained in $\ker\bigl(\Delta[i]_n^{(2)}\bigr)$.

Obviously we have $\ker(c[i]_n) \cap \ker\bigl(c[i]_{n+1}^*\bigr) \subseteq \ker(\Delta[i]_n)$. 
If $k \cdot x \in \ker(c[i]_n) \cap \ker\bigl(c[i]_{n+1}^*\bigr)$ for $k \in \IZ, k \not=  0$ and 
$x \in \ker(\Delta[i]_n)$, then  $x \in \ker(c[i]_n) \cap \ker\bigl(c[i]_{n+1}^*\bigr)$ 
since $C[i]_*$ is free as $\IZ$-module.  Since 
$\ker\bigl(\Delta[i]_n^{(2)}\bigr) = \ker\bigl(c[i]_n^{(2)}\bigr) \cap  \ker\bigl((c[i]_{n+1}^{(2)})^*\bigr)$ 
(see~\cite[Lemma~1.18 on page~24]{Lueck(2002)}), the finitely generated free abelian groups 
$\ker(c[i]_n) \cap \ker\bigl(c[i]_{n+1}^*\bigr)$
and $\ker(\Delta[i]_n)$ have the same rank. This implies
 \begin{eqnarray}
   \ker(\Delta[i]_n)  & = & \ker(c[i]_n) \cap \ker\bigl(c[i]_{n+1}^*\bigr).
   \label{lem:boundedness_of_sequences:alpha:(1)}
 \end{eqnarray}
Let $u[i] \colon \ker(\Delta[i]_n) \to H_n(C[i]_*)_f$
be the composite of the inclusion $ \ker(\Delta[i]_n) \to \ker(c[i]_n)$ 
with the obvious projection $\ker(c[i]_n) \to H_n(C[i]_*)_f$.
Let $v[i] \colon \ker\bigl(\Delta[i]_n^{(2)}\bigr) \to H_n^{(2)}\bigl(C[i]_*^{(2)}\bigr)$
be the composite of the inclusion $ \ker\bigl(\Delta[i]_n^{(2)}\bigr) \to \ker\bigl(c[i]_n^{(2)}\bigr)$
and the projection $\ker\bigl(c[i]_n^{(2)}\bigr)  \to H_n^{(2)}\bigl(C[i]_*^{(2)}\bigr)$.
The following diagram of isomorphisms of $\caln(\{1\})$-Hilbert modules commutes
\[
\xycomsquareminus
{\ker(\Delta[i]_n)^{(2)}}{j[i]^{(2)}}{\ker\bigl(\Delta[i]_n^{(2)}\bigr)}
{u[i]^{(2)}}{v[i]}
{\bigl(H_n(C[i]_*)_f\bigr)^{(2)}}{\alpha[i]_n}{H_n^{(2)}\bigl(C[i]_*^{(2)}\bigr)}
\]
We conclude from~\cite[Theorem~3.14~(1) on page~128]{Lueck(2002)}
\begin{eqnarray}
\quad \quad \quad {\det}_{\caln(\{1\})}(\alpha[i]_n) \cdot {\det}_{\caln(\{1\})}(u[i]^{(2)})
&  = &
{\det}_{\caln(\{1\})}(v[i]) \cdot {\det}_{\caln(\{1\})}(j[i]^{(2)}).
\label{lem:boundedness_of_sequences:alpha:(2)}
\end{eqnarray}
Since $v[i]$ is by definition an isometric isomorphism, we have
\begin{eqnarray}
{\det}_{\caln(\{1\})}(v[i])  & = & 1.
\label{lem:boundedness_of_sequences:alpha:(3)}
\end{eqnarray}
We conclude from Lemma~\ref{lem:Fuglede-Kadison_and_tors_for_G_is_trivial} 
\begin{eqnarray}
& 1 \le {\det}_{\caln(\{1\})}(j[i]^{(2)})  \le  
{\det}_{\caln(\{1\})}\bigl(\Delta[i]_n^{(2)}\bigr);&
\label{lem:boundedness_of_sequences:alpha:(4)}
\\
 & {\det}_{\caln(\{1\})}(u[i]^{(2)})
 =   
|\coker(u[i])|. &
\label{lem:boundedness_of_sequences:alpha:(5)}
\end{eqnarray}
The following diagram commutes
\[
\xymatrix{
\ker(c[i]_n) \cap \ker\bigl(c[i]_{n+1}^*\bigr) \ar[r]^-{k[i]_1} \ar[d]^{l[i]_1}
& 
\ker(c[i]_n)  \ar[r]^-{q[i]_1} \ar[d]^{l[i]_2} 
&
\ker(c[i]_n)/\overline{\im(c[i]_{n+1})} \ar[d]^{l[i]_3}
\\
 \ker\bigl(c[i]_{n+1}^*\bigr) \ar[r]^-{k[i]_2} 
& 
C[i]_n \ar[r]^-{q[i]_2} 
&
C[i]_n/\overline{\im(c[i]_{n+1})}
}
\]
where the maps $k[i]_m$ and $l[i]_m$ are inclusions 
and the maps $q[i]_m$ are projections and $\overline{\im(c[i]_{n+1})}$
has been introduced in Notation~\ref{not:closure_of_submodule}.
An easy diagram chase using $\overline{\im(c[i]_{n+1})} \subseteq \ker(c[i]_n)$ 
shows that $l[i]_3$ induces
a map $\overline{l[i]_3} \colon \coker\bigl(q[i]_1 \circ k[i]_1) 
\to \coker(q[i]_2 \circ k[i]_2\bigr)$ such that
$\overline{l_3}$ is injective. This implies
\begin{eqnarray}
\left|\coker(q[i]_1 \circ k[i]_1))\right| 
& \le & 
\left|\coker(q[i]_2 \circ k[i]_2)\right|.
\label{lem:boundedness_of_sequences:alpha:(6)}
\end{eqnarray}
The map $u[i]$ agrees with $q[i]_1 \circ k[i]_1$. We conclude
from~\eqref{lem:boundedness_of_sequences:alpha:(5)} 
and~\eqref{lem:boundedness_of_sequences:alpha:(6)}
\begin{eqnarray}
 {\det}_{\caln(\{1\})}(u[i]^{(2)})
& = & 
|\coker(u[i])|
\label{lem:boundedness_of_sequences:alpha:(7)}
\\
& = & 
\left|\coker(q[i]_1 \circ k[i]_1)\right| 
\nonumber
\\
& \le & 
\left|\coker(q[i]_2 \circ k[i]_2\bigr)\right|.
\nonumber
\end{eqnarray}
Let $\iota[i] \colon \ker(c[i]_{n+1}^*) \oplus \overline{\im(c[i]_{n+1})} \to C[i]_n$ 
be the inclusion, $k[i]_3 \colon \overline{\im(c[i]_{n+1})} \to C[i]_n$ 
be the inclusion, and $q[i]_3 \colon C[i]_n \to C[i]_n/\ker(c[i]_{n+1}^*)$ 
be the projection. The maps $\iota[i]$ and $q[i]_3 \circ k[i]_3$
are injective and have finite cokernels.
We conclude from Lemma~\ref{lem:Fuglede-Kadison_and_tors_for_G_is_trivial} 
\begin{eqnarray}
\bigl|\coker(q[i]_2 \circ k[i]_2\bigr|
& = & 
\big|\coker(\iota[i])\bigr|
\label{lem:boundedness_of_sequences:alpha:(8)}
\\
& = & 
\bigl|\coker(q[i]_3 \circ k[i]_3)\bigr|
\nonumber
\\
& = & 
{\det}_{\caln(\{1\})}\left(\bigl(q[i]_3 \circ k[i]_3\bigr)^{(2)}\right).
\nonumber
\end{eqnarray}
The map $c[i]_{n+1}^* \colon C[i]_n \to C[i]_{n+1}$ induces an injection
$\overline{c[i]_{n+1}^*} \colon C[i]_n/\ker(c[i]_{n+1}^*) \to C[i]_{n+1}$.
Let $\overline{c[i]_{n+1}} \colon C[i]_{n+1} \to \overline{\im(c[i]_{n+1})}$ 
be the map induced by $c[i]_{n+1}$.
Then the composite
\[
C[i]_{n+1} \xrightarrow{\overline{c[i]_{n+1}}} \overline{\im(c[i]_{n+1})} 
\xrightarrow{k[i]_3} C[i]_n \xrightarrow{q[i]_3} 
C[i]_n/\ker(c[i]_{n+1}^*) \xrightarrow{\overline{c[i]_{n+1}^*}} C[i]_{n+1}
\]
agrees with the composite $c[i]_{n+1}^* \circ c[i]_{n+1}$. 
We conclude from~\cite[Theorem~3.14~(1)  on page~128 and  Lemma~3.15~(4)  on page~129]{Lueck(2002)}
\begin{eqnarray*}
\lefteqn{\left({\det}_{\caln(\{1\})}\bigl(c[i]_{n+1}^{(2)}\bigr)\right)^2}
& & 
\\
& = & 
{\det}_{\caln(\{1\})}\bigl((c[i]_{n+1}^{(2)})^* \circ c[i]_{n+1}^{(2)}\bigr)
\\
& = & 
{\det}_{\caln(\{1\})}\left(\bigl(c[i]_{n+1}^*\circ c[i]_{n+1}\bigr)^{(2)}\right)
\\
& = & 
{\det}_{\caln(\{1\})}\left(\bigl(\overline{c[i]_{n+1}^*} \circ (q[i]_3 
\circ k[i]_3) \circ \overline{c[i]_{n+1}}\bigr)^{(2)}\right)
\\
& = & 
{\det}_{\caln(\{1\})}\left(\bigl(\overline{c[i]_{n+1}^*} \bigr)^{(2)}
\circ \bigl(q[i]_3 \circ k[i]_3\bigr)^{(2)} 
\circ \bigl(\overline{c[i]_{n+1}}\bigr)^{(2)}\right)
\\
& = & 
{\det}_{\caln(\{1\})}\left(\bigl(\overline{c[i]_{n+1}^*} \bigr)^{(2)}\right)
\cdot
{\det}_{\caln(\{1\})}\left(\bigl(q[i]_3 \circ k[i]_3\bigr)^{(2)}\right)
\cdot
{\det}_{\caln(\{1\})}\left(\bigl(\overline{c[i]_{n+1}}\bigr)^{(2)}\right).
\end{eqnarray*}
Since $1 \le {\det}_{\caln(\{1\})}\left(\bigl(\overline{c[i]_{n+1}^*} \bigr)^{(2)}\right)$ 
and 
$1 \le {\det}_{\caln(\{1\})}\left(\bigl(\overline{c[i]_{n+1}}\bigr)^{(2)}\right)$,
this implies
\begin{eqnarray}
 {\det}_{\caln(\{1\})}\left(\bigl(q[i]_3 \circ k[i]_3\bigr)^{(2)}\right)
& \le & 
\left({\det}_{\caln(\{1\})}\bigl(c[i]_{n+1}^{(2)}\bigr)\right)^2.
\label{lem:boundedness_of_sequences:alpha:(9)}
\end{eqnarray}
We derive  from~\eqref{lem:boundedness_of_sequences:alpha:(7)},~%
\eqref{lem:boundedness_of_sequences:alpha:(8)} 
and~\eqref{lem:boundedness_of_sequences:alpha:(9)}
\begin{eqnarray}
& 
1 \le  {\det}_{\caln(\{1\})}\bigl(u[i]^{(2)}\bigr) \le  
\left({\det}_{\caln(\{1\})}\bigl(c[i]_{n+1}^{(2)}\bigr)\right)^2.
&
\label{lem:boundedness_of_sequences:alpha:(10)}
\end{eqnarray}
We conclude from~\eqref{lem:boundedness_of_sequences:alpha:(2)},~%
\eqref{lem:boundedness_of_sequences:alpha:(3)},%
~\eqref{lem:boundedness_of_sequences:alpha:(4)},
and~\eqref{lem:boundedness_of_sequences:alpha:(10)} 
\begin{eqnarray}
& \left({\det}_{\caln(\{1\})}\bigl(c[i]_{n+1}^{(2)}\bigr)\right)^{-2}
\le {\det}_{\caln(\{1\})}(\alpha[i]_n) \le
{\det}_{\caln(\{1\})}\bigl(\Delta[i]_n^{(2)}\bigr).
\label{lem:boundedness_of_sequences:alpha:(11)} 
\end{eqnarray}
We conclude 
from~\cite[Lemma~1.18 on page~24 and Lemma~3.15~(4) and (7) on page~129 and~130]{Lueck(2002)}
(see also the proof of~\cite[Lemma~3.30 on page~140]{Lueck(2002)})
\begin{multline}
\ln\bigl({\det}_{\caln(\{1\}}(\Delta[i]_n^{(2)})\bigr)
\\ = \;
2 \cdot \left(\ln\bigl({\det}_{\caln(\{1\}}(c[i]_n^{(2)})\bigr) 
+ \ln\bigl({\det}_{\caln(\{1\}}(c[i]_{n+1}^{(2)})\bigr)\right).
\label{det(Delta)_and_det(c)}
\end{multline}
Now~\eqref{bound_by_K_n} and~\eqref{det(Delta)_and_det(c)} imply for all $n \ge 0$
\begin{eqnarray*}
\frac{\ln\bigl({\det}_{\caln(\{1\}}(\Delta[i]_n)\bigr)}{[G:G_i]}
& \le &
\Lambda;
\\
\frac{\ln\bigl({\det}_{\caln(\{1\}}(c[i]_n)^{-2}\bigr)}{[G:G_i]}
& \ge &
- \Lambda.
\end{eqnarray*}
This implies together with~\eqref{lem:boundedness_of_sequences:alpha:(11)} 
for all $i \in I$ and $n \ge 0$
\begin{eqnarray*}
&
-\Lambda \le \ln\bigl({\det}_{\caln(\{1\})}(\alpha[i]_n)\bigr)
\le
\Lambda.
&
\end{eqnarray*}
This finishes the proof of Lemma~\ref{lem:boundedness_of_sequences}.
\end{proof}

\subsection{The maps $\alpha[i]_n$}
\label{subsec:The_maps_alpha[i]_n}

In this subsection we deal in a slightly more general context with the maps $\alpha[i]_n$.
Notice that in the lemma below the group $N$ is not required to be abelian, the 
condition about the $N$-action is on homology with $\IQ$-coefficients
and the number $n$ is fixed.

\begin{remark}[Strategy]
  \label{rem:strategy}
  The proof of the next result will reveal our main strategy and idea.  We will
  consider the quotient $Q = G/N$. We will use the a priori bounds above
  applied to everything on the $Q$-level for the system $\{Q_i \mid i \in I\}$,
  where $Q_i$ is the image of $G_i$ under the projection $G \to Q$.  We will
  show that the values on the $G$-level and $Q$ level differ in a controlled
  manner. Finally we use the facts that $[G:G_i] = [N:(N \cap G_i)] \cdot
  [Q:Q_i]$ and $\lim_{i \to \infty} [N:(N \cap G_i)] = \infty$.
\end{remark}

\begin{lemma} \label{lem:alpha_n_does_not_contribute}
  Let $C_*$  be a based free $\IZ G$-chain complex. Fix $n \ge 0$.  
  Assume that there is an infinite normal subgroup 
  $N \subseteq G$ and an index $i_0$ such that $N$ acts trivially on
  $\IQ \otimes_{\IZ} H_n(C[i]_*)$ for every $i \ge i_0$. Then:
  \[
   \lim_{i \in I} \left| \frac{\ln\bigl({\det}_{\caln(\{1\})}(\alpha[i]_n)\bigr)}
   {[G:G_i]}\right| = 0.
  \]
\end{lemma}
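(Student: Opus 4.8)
The plan is to follow the strategy of Remark~\ref{rem:strategy}: pass to the quotient $Q := G/N$, use the a priori bounds of Lemma~\ref{lem:boundedness_of_sequences} on the $Q$-level, and show that the $G$-level and $Q$-level values of $\ln\bigl({\det}_{\caln(\{1\})}(\alpha_n)\bigr)$ differ by a quantity which is negligible compared to $[G:G_i]$ because $[N:N\cap G_i]\to\infty$. First I would set up the $Q$-level data. Put $N_i := N\cap G_i$ and let $Q_i := G_iN/N\subseteq Q$ be the image of $G_i$; these form an inverse system of finite index normal subgroups of $Q$ directed by inclusion, and $[G:G_i] = [N:N_i]\cdot[Q:Q_i]$. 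Since $\bigcap_{i\in I}N_i\subseteq\bigcap_{i\in I}G_i = \{1\}$, the $N_i$ are directed by inclusion, and $N$ is infinite, an elementary argument gives $\lim_{i\in I}[N:N_i] = \infty$. Let $C'_* := \IZ\otimes_{\IZ N}C_*$; as $N$ is normal this is a finite based free $\IZ Q$-chain complex with $\dim_{\IZ Q}(C'_n) = \dim_{\IZ G}(C_n)$, and transitivity of coinvariants identifies $C'[i]_* := \IZ\otimes_{\IZ Q_i}C'_*$ with $\IZ\otimes_{\IZ\overline{N_i}}C[i]_*$, where $\overline{N_i} := G_iN/G_i\cong N/N_i$ is a finite group acting \emph{freely} on the $\IZ$-basis of $C[i]_*$; write $p[i]_*\colon C[i]_*\to C'[i]_*$ for the canonical projection of based free $\IZ$-chain complexes, and $\alpha'[i]_n$ for the map~\eqref{iso_alpha} of $C'[i]_*$. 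Applying Lemma~\ref{lem:boundedness_of_sequences} to the $\IZ Q$-chain complex $C'_*$ and the system $\{Q_i\}$ (its proof only needs a uniform operator-norm bound on the differentials of the $C'[i]_*$, not triviality of $\bigcap Q_i$) yields a constant $\Lambda'>0$ with $\mg\bigl(H_p(C'[i]_*)\bigr)\le\Lambda'[Q:Q_i]$ and $\bigl|\ln({\det}_{\caln(\{1\})}(\alpha'[i]_n))\bigr|\le\Lambda'[Q:Q_i]$ for all $i,p$.

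Next I would compare $\alpha[i]_n$ with $\alpha'[i]_n$. Naturality of~\eqref{iso_alpha} in the $\IZ$-chain complex gives, for each $i$, a commutative square of maps of finitely generated Hilbert $\caln(\{1\})$-modules with horizontal maps $\alpha[i]_n$ and $\alpha'[i]_n$ and vertical maps induced by $p[i]_*$ on $\bigl(H_n(-)_f\bigr)^{(2)}$ and on $H_n^{(2)}\bigl((-)^{(2)}\bigr)$. For $i\ge i_0$ the finite group $\overline{N_i}$ acts trivially on $\IQ\otimes_\IZ H_n(C[i]_*)$, hence trivially on $H_n(C[i]_*)_f$ and, via~\eqref{iso_alpha}, trivially on $H_n^{(2)}(C[i]_*^{(2)})$. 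The transfer chain map $C'[i]_*\to C[i]_*$ attached to the free $\overline{N_i}$-complex $C[i]_*$ composes with $p[i]_*$ to multiplication by $[N:N_i]$ in both orders on $H_n(-)_f$ (here triviality of the $\overline{N_i}$-action is what turns the norm element into multiplication by $[N:N_i]$); so the induced map $H_n(p[i]_*)_f$ on free parts is injective of equal rank with finite cokernel, and its Hilbert completion is an isomorphism. Moreover $H_n^{(2)}(C[i]_*^{(2)}) = \ker\bigl(\Delta[i]_n^{(2)}\bigr)$ lies, by triviality of the action, in the $\overline{N_i}$-invariant subspace of $C[i]_n^{(2)}$, on which $p[i]_*^{(2)}$ is $\sqrt{[N:N_i]}$ times an isometric chain isomorphism onto $C'[i]_*^{(2)}$; hence $H_n(p[i]_*^{(2)})$ is $\sqrt{[N:N_i]}$ times an isometric isomorphism onto $H_n^{(2)}(C'[i]_*^{(2)})$. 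Both vertical maps being isomorphisms, multiplicativity of Fuglede--Kadison determinants (\cite[Theorem~3.14~(1)]{Lueck(2002)}), together with Lemma~\ref{lem:Fuglede-Kadison_and_tors_for_G_is_trivial} applied to $H_n(p[i]_*)_f$, gives
\[
\ln\bigl({\det}_{\caln(\{1\})}(\alpha[i]_n)\bigr) = \ln\bigl({\det}_{\caln(\{1\})}(\alpha'[i]_n)\bigr) + \ln\bigl|\coker(H_n(p[i]_*)_f)\bigr| - \tfrac12\,r_i\,\ln\bigl([N:N_i]\bigr),
\]
where $r_i := \dim_\IC H_n^{(2)}(C[i]_*^{(2)}) = \rk\bigl(H_n(C[i]_*)_f\bigr)$.

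Finally I would estimate the two correction terms using the $Q$-level bounds. The transfer identifies $r_i = \rk\bigl(H_n(C'[i]_*)_f\bigr)\le\mg\bigl(H_n(C'[i]_*)\bigr)\le\Lambda'[Q:Q_i]$, and $\coker(H_n(p[i]_*)_f)$ is a quotient of $H_n(C'[i]_*)_f$ annihilated by $[N:N_i]$, so $\ln\bigl|\coker(H_n(p[i]_*)_f)\bigr|\le\mg\bigl(H_n(C'[i]_*)\bigr)\cdot\ln([N:N_i])\le\Lambda'[Q:Q_i]\ln([N:N_i])$. Combining with the a priori bound on $\ln({\det}_{\caln(\{1\})}(\alpha'[i]_n))$, we get for $i\ge i_0$
\[
\Bigl|\ln\bigl({\det}_{\caln(\{1\})}(\alpha[i]_n)\bigr)\Bigr|\le\Lambda'[Q:Q_i] + \tfrac32\,\Lambda'[Q:Q_i]\,\ln([N:N_i]),
\]
and dividing by $[G:G_i] = [N:N_i][Q:Q_i]$ yields a bound of the form $\mathrm{const}\cdot\bigl(1+\ln([N:N_i])\bigr)/[N:N_i]$, which tends to $0$ as $i\in I$ since $[N:N_i]\to\infty$. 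I expect the main obstacle to be the two passages involving the Hilbert structures in the second paragraph: checking cleanly that $H_n^{(2)}(C[i]_*^{(2)})$ sits inside the $\overline{N_i}$-invariants and that $p[i]_*^{(2)}$ is exactly $\sqrt{[N:N_i]}$ times an isometry there, together with the point that $r_i$ and $\mg\bigl(H_n(C'[i]_*)\bigr)$ are controlled by the \emph{$Q$}-level quantity $\Lambda'[Q:Q_i]$ rather than by the (here useless) $G$-level bound $\Lambda[G:G_i]$ — which is precisely what the triviality hypothesis on the $N$-action buys us.
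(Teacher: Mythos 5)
Your proposal is correct and follows essentially the same route as the paper's proof: pass to $Q=G/N$ and the quotient complex $D_*=\IZ Q\otimes_{\IZ G}C_*$, invoke the a priori bounds of Lemma~\ref{lem:boundedness_of_sequences} on the $Q$-level, and compare $\alpha[i]_n$ with the $Q$-level map through the commutative square induced by the projection $C[i]_*\to D[i]_*$, exploiting $[N:N\cap G_i]\to\infty$. Your local variants (the transfer argument showing $H_n(\pr[i]_*)_f$ is injective with cokernel annihilated by $[N:N\cap G_i]$, and the exact identification of $H_n^{(2)}(\pr[i]_*^{(2)})$ as $\sqrt{[N:N\cap G_i]}$ times an isometry via harmonic representatives lying in the invariants) are valid and, if anything, slightly sharper than the paper's two-sided determinant estimates.
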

\begin{proof} 
  We can assume without loss of generality $G = G_{i_0}$ and that $N$ operates trivially
 on $H_n( \IQ\otimes_{\IZ} C[i]_*)$ for all $i \in I$, since $C[i]_*$ agrees
  as based free $\IZ$-chain complex with $ \IZ[G_{i_0}/G_i] \otimes_{\IZ[G_{i_0}]} C_* 
 $ and $[G:G_i] = [G_{i_0} : G_i] \cdot [G:G_{i_0}]$ holds for
$i \ge i_0$ and $N \cap G_{i_0}$ is an infinite normal subgroup of $G_{i_o}$.

Put $Q = G/N$. Let $\pr \colon G \to Q$ be the projection. Put $Q_i = \pr(G_i)$.
Then we obtain an exact sequence of finite groups $1 \to N/(N \cap G_i) \to G/G_i \to Q/Q_i \to 1$.
This implies for every $i \in I$
\begin{eqnarray}
[G:G_i] & = & [N :( N \cap G_i)] \cdot [Q:Q_i].
\label{estimate_for_[G:G_i|_byN_and_Q_here}
\end{eqnarray}
Define the finite based free $\IZ Q$-chain complex $D_*$ 
by $D_* = \IZ Q\otimes_{\IZ G} C_* $.
Define the based free finite $\IZ[Q/Q_i]$-chain complex $D[i]_*$ by
$D[i]_* = \IZ[Q/Q_i] \otimes_{\IZ Q} D_*  = \IZ[Q/Q_i] \otimes_{\IZ[G/G_i]} C[i]_* $.
We get $D[i]_* = \IZ\otimes_{\IZ [N/(N \cap G_i)]}  C[i]_* $ 
as based free $\IZ$-chain complexes. 

Let $\pr[i]_* \colon C[i]_* \to D[i]_*$ be the canonical projection.  Let
$j[i]_* \colon \bigl(C[i]_*\bigr)^{N/(N \cap G_i)} 
\to D[i]_* = \IZ \otimes_{\IZ [N/(N \cap G_i)]} C[i]_*$ be the obvious chain map.  Let $E[i]_*$ be
its cokernel.  Each $E[i]_n$ is annihilated by multiplication with 
$|N/(N \cap G_i)|$. Since we obtain a short exact sequence of $\IZ$-chain complexes 
$0 \to \bigl(C[i]_*\bigr)^{N/(N \cap G_i)} \xrightarrow{j[i]_*} D[i]_* \to E[i]_* \to 0$, we
conclude by considering the associated long homology sequence, that the kernel
and the cokernel of the map $H_n(j[i]_*) \colon H_n\bigl((C[i]_*)^{N/(N \cap G_i)}\bigr)
\to H_n\bigl(D[i]_*\bigr)$ is annihilated by multiplication with $|N/(N \cap
G_i)|$. Since $N/(N \cap G_i)$ acts trivially on
$\IQ\otimes_{\IZ} H_n\bigl(C[i]_*\bigr)$, the canonical map
$\IQ\otimes_{\IZ}  H_n\bigl((C[i]_*)^{N/(N \cap G_i)}\bigr) \to \IQ\otimes_{\IZ}  H_n(C[i]_*)$ is bijective. Hence the map 
$H_n(\pr[i]_*)_f \colon H_n(C[i]_*)_f \to H_n(D[i]_*)_f$ 
is injective and its cokernel is annihilated by
multiplication with $|N/(N \cap G_i)|$. This implies together with
Lemma~\ref{lem:rho(2)-rhoZ} applied to $H_n(\pr[i]_*)_f$ viewed as
$1$-dimensional finite free $\IZ$-chain complex
\begin{multline*}
1 \le {\det}_{\caln(\{1\}}\bigl(\bigl(H_n(\pr[i]_*)_f\bigr)^{(2)}\bigr) =
\bigl|\tors\bigl(\coker(H_n(\pr[i]_*)_f)\bigl)\big| 
\\
\le
|N/(N \cap G_i)|^{\rk_{\IZ}(H_n(D[i]_*)_f)}.
\end{multline*}
This together with~\eqref{estimate_for_[G:G_i|_byN_and_Q_here} shows
\begin{eqnarray*}
0 & \le &
\frac{\ln\left({\det}_{\caln(\{1\}}\bigl(\bigl(H_n(\pr[i]_*)_f\bigr)^{(2)}\bigr)\right)}
{[G:G_i]} 
\\
& \le &
\frac{\ln\bigl([N:(N \cap G_i)]\bigr)}{[N:(N \cap G_i)]} 
\cdot \frac{\rk_{\IZ}(H_n(D[i]_*)_f)}{[Q:Q_i]}
\\
& \le &
\frac{\ln\bigl([N:(N \cap G_i)]\bigr)}{[N:(N \cap G_i)]} 
\cdot \frac{\rk_{\IZ}(D[i]_n)}{[Q:Q_i]}
\\
& \le &
\frac{\ln\bigl([N:(N \cap G_i)]\bigr)}{[N:(N \cap G_i)]} 
\cdot \rk_{\IZ Q}(D_n).
\end{eqnarray*}
Since $\lim_{i \in I} [N:(N \cap G_i)] = \infty$ and 
hence $\lim_{i \in I} \frac{\ln\bigl([N:(N \cap G_i)]\bigr)}{[N:(N \cap G_i)]} = 0$, 
we conclude
\begin{eqnarray}
\lim_{i \in I} \; 
\frac{\ln\left({\det}_{\caln(\{1\}}\bigl(\bigl(H_n(\pr[i]_*)_f\bigr)^{(2)}\bigr)\right)}
{[G:G_i]} 
& = & 0.
\label{lim_n_to_infty_H_n(pr[i])}
\end{eqnarray}
The following diagram commutes
\[
\xymatrix@!C=15em{
\bigl(C[i]_n^{(2)}\bigr)^{N/(N \cap G_i)}  
\ar[r]^{\bigl(\Delta[i]_n^{(2)}\bigr)^{N/(N \cap G_i)}} \ar[d]^{k[i]_n} 
&\bigl(C[i]_n^{(2)}\bigr)^{N/(N \cap G_i)}  \ar[d]^{k[i]_n} 
\\
C[i]_n^{(2)}  \ar[r]^{\Delta[i]_n^{(2)}} \ar[d]^{\pr[i]_n^{(2)}} 
& C[i]_n^{(2)} \ar[d]^{\pr[i]_n^{(2)}} 
\\
D[i]_n^{(2)}  \ar[r]^{\Delta[i,D]_n^{(2)}}
& D[i]_n^{(2)} 
}
\]
where $k[i]_n$ is the inclusion, $\Delta[i]_n^{(2)}$ is 
the $n$th Laplacian of $C[i]_*^{(2)}$ and
$\Delta[i,D]_n^{(2)}$ is the $n$th Laplacian of $D[i]_*^{(2)}$.

Recall that we equip $\IC[N/(N \cap G_i)] $ and 
$\IC\otimes_{\IC[N/(N \cap G_i)]} \IC[N/(N \cap G_i)] $ with the Hilbert space
structure coming from the obvious $\IC$-basis. Hence $N/(N \cap G_i) $ and
$\{1 \otimes e\}$ are Hilbert basis for $\IC[N/(N \cap G_i)] $ and 
$\IC\otimes_{\IC[N/(N \cap G_i)]}\IC[N/(N \cap G_i)]$. We equip
$\IC[N/(N \cap G_i)]^{N/(N \cap G_i)} \subseteq \IC[N/(N \cap G_i)]$ 
with the sub Hilbert space structure.
Let $N = \sum_{\overline{k} \in N/(N \cap G_i)} \overline{k} \in \IC[N/(N \cap G_i)]$ 
be the norm element. Then $\{N/\sqrt{[N:(N \cap G_i)]}\}$ is a Hilbert
basis of the Hilbert space $\IC[N/(N \cap G_i)]^{N/(N \cap G_i)}$.  The obvious composite
\[
\IC[N/(N \cap G_i)]^{N/(N \cap G_i)} \to \IC[N/(N \cap G_i)] 
\to \IC\otimes_{\IC[N/(N \cap G_i)]} \IC[N/(N \cap G_i)]
\]
sends $N/\sqrt{[N:(N \cap G_i)]}$ to an element of norm $\sqrt{[N:(N \cap G_i)]}$.
This implies that the composite 
$\pr[i]_n^{(2)} \circ k[i]_n \colon  \bigl(C[i]_n^{(2)}\bigr)^{N/(N \cap G_i)} 
\to D[i]_n^{(2)}$ 
satisfies
\[
||x||_{L^2} \le \bigl|\bigl|\pr[i]_n^{(2)} \circ j[i]_n(x)\bigr|\bigr|_{L^2} 
\quad \text{for}\; x \in \bigl(C[i]_n^{(2)}\bigr)^{N/(N \cap G_i)}.
\] 
We conclude that the induced map
\[\pr[i]_n^{(2)} \circ k[i]_n|_{\ker((\Delta[i]_n^{(2)})^{N/(N \cap G_i)})} \colon
\ker\bigl(\bigl(\Delta[i]_n^{(2)}\bigr)^{N/(N \cap G_i)}\bigr) 
\to \ker\bigl(\Delta[i,D]_n^{(2)}\bigr)
\]
satisfy the corresponding inequality. Notice
that $\bigl(\Delta[i]_n^{(2)}\bigr)^{N/(N \cap G_i)}$ can be 
viewed as the $n$th Laplacian of the Hilbert
chain complex $\big(C[i]_n^{(2)}\bigr)^{N/(N \cap G_i)}$. Hence the map
\[
H_n^{(2)}\bigl(\pr[i]_*^{(2)} \circ k[i]_*\bigr) \colon 
H_n^{(2)}\left(\bigl(C[i]_*^{(2)}\bigr)^{N/(N \cap G_i)}\right)
\to H_n^{(2)}\bigl(D[i]_*^{(2)}\bigr)
\]
satisfies
\[
||y||_{L^2} \le 
\bigl|\bigl|H_n^{(2)}\bigl(\pr[i]_*^{(2)} \circ k[i]_*\bigr)(y)\bigr|\bigr|_{L^2} 
\quad \text{for}\; y \in H_n^{(2)}\left(\bigl(C[i]_*^{(2)}\bigr)^{N/(N \cap G_i)}\right).
\] 
since the obvious maps $\ker\bigl(\bigl(\Delta[i]_n^{(2)}\bigr)^{N/(N \cap G_i)}\bigr)  
\to H_n^{(2)}\left(\bigl(C[i]_*^{(2)}\bigr)^{N/(N \cap G_i)}\right)$ and
$\ker\bigl(\Delta[i,D]_n^{(2)}\bigr) \to H_n^{(2)}\bigl(D[i]_*^{(2)}\bigr)$ are isometries.
This implies that the norm and hence also the Fuglede-Kadison determinant of the inverse
of $H_n^{(2)}\bigl(\pr[i]_*^{(2)} \circ k[i]_*\bigr)$ is bounded by $1$.
Since both $H_n^{(2)}(\pr[i]_*^{(2)})$ and $H_n^{(2)}(k[i]_*)$ are invertible, we conclude 
from~\cite[Theorem~3.14~(1) on page~128]{Lueck(2002)} 
\begin{eqnarray}
1 & \le & {\det}_{\caln(\{1\})}\bigl(H_n^{(2)}(\pr[i]_*^{(2)})\bigr) 
\cdot {\det}_{\caln(\{1\})}\bigl(H_n^{(2)}(k[i]_*)\bigr).
\label{lem:alpha_n_does_not_contribute:(1)}
\end{eqnarray}

The projection $\IC[N/(N \cap G_i)] \to \IC \otimes_{\IC[N/(N \cap G_i)} \IC[N/(N \cap G_i)]$ 
sends each element in $N/(N \cap G_i)$ to an
element of norm $1$. Hence the operator norm of 
$\pr[i]_n^{(2)} \colon  C[i]_n^{(2)}\to D[i]_n^{(2)}$ is bounded by 
$[N:(N\cap G_i]$. This implies that 
the operator norm of $H_n^{(2)}(\pr[i|_*^{(2)}) \colon 
H_n^{(2)}(C[i]_*) \to H_n(D[i]_*)$ is bounded by $[N:(N\cap G_i]$.
We conclude
\begin{eqnarray}
{\det}_{\caln(\{1\})}\bigl(H_n^{(2)}(\pr[i|_*^{(2)})\bigr) 
& \le & 
[N:(N\cap G_i]^{\dim_{\IC}(H_n^{(2)}(D[i]_*^{(2)})}.
\label{lem:alpha_n_does_not_contribute:(2)}
\end{eqnarray}
Since $k[i]_*$ is an isometric embedding, the norm of the induced map
$H_n^{(2)}(k[i]_*)$ is bounded by $1$. Hence
\begin{eqnarray}
{\det}_{\caln(\{1\})}\bigl(H_n^{(2)}(k[i|_*)\bigr) & \le & 1.
\label{lem:alpha_n_does_not_contribute:(3)}
\end{eqnarray}
Putting~\eqref{lem:alpha_n_does_not_contribute:(1)},~%
\eqref{lem:alpha_n_does_not_contribute:(2)}
and~\eqref{lem:alpha_n_does_not_contribute:(3)} together yields
\begin{eqnarray*}
1 & \le & {\det}_{\caln(\{1\})}\bigl(H_n^{(2)}\bigl(\pr[i]_n\bigr) 
\le [N:(N\cap G_i)]^{\dim_{\IC}(H_n^{(2)}(D[i]_*^{(2)})}.
\end{eqnarray*}
The same argument as in the end of the proof of~\eqref{lim_n_to_infty_H_n(pr[i])} shows
\begin{eqnarray}
\lim_{i \in I} \; 
\frac{\ln\left({\det}_{\caln(\{1\}}\bigl(H_n^{(2)}(\pr[i]_*^{(2)})\bigr)\right)}{[G:G_i]} 
& = & 0.
\label{lim_n_to_infty_H_n(2)(pr[i])}
\end{eqnarray}
We obtain from Lemma~\ref{lem:boundedness_of_sequences}~\eqref{lem:boundedness_of_sequences:alpha} applied to
$Q$, $\{Q_i \mid i \in I\}$ and $D_*$ that there exists a constant $\Lambda$
independent of $i \in I$ such that for all $i \in I$
\begin{eqnarray}
\frac{\ln\left({\det}_{\caln(\{1\}}(\alpha[i,D]_n)\right)}{[Q:Q_i]} & \le & \Lambda.
\label{bound_on_alpha[i,D]}
\end{eqnarray}
Since $\lim_{i \in } [N:(N \cap G_i)] = \infty$, 
we derive from~\eqref{estimate_for_[G:G_i|_byN_and_Q_here} and~\eqref{bound_on_alpha[i,D]}
\begin{eqnarray}
\frac{\ln\left({\det}_{\caln(\{1\}}(\alpha[i,D]_n)\right)}{[G:G_i]} & = & 0.
\label{limit_for_alpha[i,D]}
\end{eqnarray}

The following diagram of isomorphisms of $\caln(\{1\})$-Hilbert modules commutes
\[
\xymatrix{\bigl(H_n(C[i]_*)_f\bigr)^{(2)}\ar[r]^{\alpha[i]_n} 
\ar[d]_{\bigl(H_n(\pr[i]_*)_f\bigr)^{(2)}}
&
H_n^{(2)}\bigl(C[i]_*^{(2)}\bigr) \ar[d]^{H_n^{(2)}\bigl(\pr[i]_*^{(2)}\bigr)}
\\
\bigl(H_n(D[i]_*)_f\bigr)^{(2)} \ar[r]_{\alpha[i,D]_n} 
&
H_n^{(2)}\bigl(D[i]_*^{(2)}\bigr) 
}
\]
where $\alpha[i]_n$ and $\alpha[i,D]_n$ respectively is the 
map associated to the finite based free $\IZ$-chain complex
$C[i]_*$ and $D[i]_*$ respectively in~\eqref{iso_alpha}. 
We conclude from~\cite[Theorem~3.14~(1) on page~128]{Lueck(2002)}
\begin{multline*}
\ln\left({\det}_{\caln(\{1\}}(\alpha[i]_n)\right) 
= 
\ln\left({\det}_{\caln(\{1\}}(\alpha[i,D]_n)\right) 
+ \ln\left({\det}_{\caln(\{1\}}\bigl(\bigl(H_n(\pr[i]_*)_f\bigr)^{(2)}\bigr)\right)
\\- \ln\left({\det}_{\caln(\{1\}}\bigl(H_n^{(2)}(\pr[i]_*^{(2)})\bigr)\right).
\end{multline*}
This together with~\eqref{lim_n_to_infty_H_n(pr[i])},~%
\eqref{lim_n_to_infty_H_n(2)(pr[i])}, and~\eqref{limit_for_alpha[i,D]} implies
\begin{eqnarray*}
\lim_{i \in I} \; 
\left|\frac{\ln\bigl({\det}_{\caln(\{1\})}(\alpha[i]_n)\bigr)}{[G:G_i]}\right|
& = & 0.
\end{eqnarray*}
This finishes the proof 
Lemma~\ref{lem:alpha_n_does_not_contribute}.
\end{proof}

\begin{example}
Let $X$ be a connected $CW$-complex with fundamental group $G = \pi_1(X)$.
Since $G$ acts trivially on $H_0(G_i\backslash \widetilde{X})$ for all $i \in I$, 
Lemma~\ref{lem:alpha_n_does_not_contribute} implies
\[
\lim_{i \in I} \left| \frac{\ln\bigl({\det}_{\caln(\{1\})}(\alpha[i]_0)\bigr)}{[G:G_i]}\right| = 0.
\]
Let $M$ be a closed manifold of dimension $d$ with fundamental group $G = \pi_1(X)$.
Let $G_0$ be the subgroup of index $1$ or $2$ of those elements in $G$ which act
orientation preserving on $\widetilde{M}$. Then $G_0$ acts trivially on $H_d(G_i\backslash \widetilde{M})$
for all $i \in I$. We conclude from Lemma~\ref{lem:alpha_n_does_not_contribute}
\[
\lim_{i \in I} \left| \frac{\ln\bigl({\det}_{\caln(\{1\})}(\alpha[i]_d)\bigr)}{[G:G_i]}\right| = 0.
\]
If we additionally assume
that $G_i\backslash \widetilde{M}$ is a rational homology sphere for all $i \in I$, then
Lemma~\ref{lem:rho(2)-rhoZ}  implies
\[
\lim_{i \in I} \; \frac{\rho^{(2)}\bigl(G_i\backslash \widetilde{M})}{[G:G_i]} 
- \frac{\rho^{\IZ}\bigl(G_i\backslash \widetilde{M})}{[G:G_i]} = 0.
\]
\end{example}

\subsection{Proof of a chain complex version}
\label{subsec:Proof_of_a_chain_complex_version}
In this subsection we prove

\begin{proposition} \label{pro:chain_complex_version} Let $C_*$ be a finite
  based free $\IZ G$-chain complex.  Consider a natural number $d$. Assume that
  there is an infinite  abelian normal subgroup $A \subseteq G$, an index $i_0$
  and a natural number $r$ such that $H_n(C[i]_*)$ is a nilpotent $\IZ[A/(A\cap
  G_i)]$-module of filtration length $\le r$ for every $i \ge i_0$ and $n \le
  d$. Then:
  \begin{enumerate}

  \item \label{pro:chain_complex_version:mg}
     We get for all $n \le d$
     \[
     \lim_{i \in I} \frac{\mg(H_n(C_*[i]))}{[G:G_i]} = 0;
     \]
   \item \label{pro:chain_complex_version:b_n}
     We get for all $n \le d$
     \[
     \lim_{i \in I} \frac{\dim_K\bigl(H_n(C_*[i];K)\bigr)}{[G:G_i]} = 0;
     \]   

  \item \label{pro:chain_complex_version:tors_H}
     We get for all $n \le d$
     \[
     \lim_{i \in I} \frac{\ln(|\tors(H_n(C_*[i]))|)}{[G:G_i]} = 0;
     \]
  
  \item \label{pro:chain_complex_version:alpha}
    We get for all $n \le d$ 
    \[
    \lim_{i \in I}  \frac{\left|\ln({\det}_{\caln(\{1\})}(\alpha[i]_n))\right|}{[G:G_i]} = 0;
    \]   
   \item\label{pro:chain_complex_version:rho}
   We get
   \[
   \lim_{i \in I} \;  \rho^{(2)}\bigl(C[i]_*;\caln(G/G_i)\bigr)
   = \lim_{i \in I} \;  \frac{\rho^{\IZ}(C[i]_*)}{[G:G_i]} = 0.
   \]
\end{enumerate}
\end{proposition}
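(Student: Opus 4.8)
The plan is to assemble the results of this section by passing from $G$ to the quotient $Q=G/A$, exactly along the lines of Remark~\ref{rem:strategy}. After the standard reduction (replace $G$ by $G_{i_0}$, $I$ by $\{i\mid i\ge i_0\}$, and $A$ by $A\cap G_{i_0}$, which changes each ratio only by a fixed constant and leaves the nilpotency hypothesis intact) we may assume $i_0$ is minimal in $I$. Write $\overline{G}_i:=A/(A\cap G_i)$, a finite abelian group with $|\overline{G}_i|=[A:(A\cap G_i)]=:a_i$; since $\bigcap_{i}G_i=\{1\}$ and $A$ is infinite we have $a_i\to\infty$ over $I$, and by Lemma~\ref{lem:properties_of_mg}~\eqref{lem:properties_of_mg:structure_d} we have $\mg(\overline{G}_i)\le\log_2(a_i)$, so $\frac{(\log_2 a_i)^c}{a_i}\to 0$ and $\frac{\ln(a_i)\cdot(\log_2 a_i)^c}{a_i}\to 0$ for every fixed $c$. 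Put $Q=G/A$, let $Q_i$ be the image of $G_i$ in $Q$, $D_*=\IZ Q\otimes_{\IZ G}C_*$, and $D[i]_*=\IZ[Q/Q_i]\otimes_{\IZ Q}D_*$. Then $C[i]_*$ is a finitely generated free $\IZ[\overline{G}_i]$-chain complex with $\IZ\otimes_{\IZ[\overline{G}_i]}C[i]_*=D[i]_*$, the canonical projection $\pr[i]_*\colon C[i]_*\to D[i]_*$ is the map $\pr_*$ of Proposition~\ref{pro:estimate_for_mg} for the group $\overline{G}_i$, and $[G:G_i]=a_i\cdot[Q:Q_i]$. Finally, fix $\Lambda$ by applying Lemma~\ref{lem:boundedness_of_sequences} to $D_*$ and $\{Q_i\}$, so that $\frac{\mg(H_p(D[i]_*))}{[Q:Q_i]}\le\Lambda$ and $\frac{\ln|\tors(H_n(D[i]_*))|}{[Q:Q_i]}\le\Lambda$ for all $i,p,n$.

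For~\eqref{pro:chain_complex_version:mg}, Proposition~\ref{pro:estimate_for_mg}~\eqref{pro:estimate_for_mg:mg} applied to $\overline{G}_i$ and $C[i]_*$ gives $\mg(H_n(C[i]_*))\le\sum_{p\le n}C_0(r,n,p)\,\mg(\overline{G}_i)^{C_1(r,n,p)}\,\mg(H_p(D[i]_*))$; dividing by $[G:G_i]=a_i[Q:Q_i]$ and using $\mg(\overline{G}_i)^{C_1}\le(\log_2 a_i)^{C_1}$ together with the $Q$-level bound, each summand is $\le C_0(r,n,p)\cdot\frac{(\log_2 a_i)^{C_1(r,n,p)}}{a_i}\cdot\Lambda\to 0$. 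Assertion~\eqref{pro:chain_complex_version:b_n} follows from~\eqref{pro:chain_complex_version:mg} via the Universal Coefficient Theorem, which together with Lemma~\ref{lem:properties_of_mg}~\eqref{lem:properties_of_mg:estimate} gives $\dim_K H_n(C[i]_*;K)\le\mg(H_n(C[i]_*))+\mg(H_{n-1}(C[i]_*))$. For~\eqref{pro:chain_complex_version:tors_H}, the four-term exact sequence $0\to\ker(H_n(\pr[i]_*))\to H_n(C[i]_*)\to H_n(D[i]_*)\to\coker(H_n(\pr[i]_*))\to 0$ yields $|\tors(H_n(C[i]_*))|\le|\ker(H_n(\pr[i]_*))|\cdot|\tors(H_n(D[i]_*))|$, hence $\frac{\ln|\tors(H_n(C[i]_*))|}{[G:G_i]}\le\frac{\ln|\ker(H_n(\pr[i]_*))|}{[G:G_i]}+\frac{\ln|\tors(H_n(D[i]_*))|}{a_i[Q:Q_i]}$; the second term is $\le\Lambda/a_i\to 0$, and the first tends to $0$ by the same computation as for~\eqref{pro:chain_complex_version:mg}, now starting from Proposition~\ref{pro:estimate_for_mg}~\eqref{pro:estimate_for_mg:order} and using $\frac{\mg(H_p(D[i]_*))}{[Q:Q_i]}\le\Lambda$.

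For~\eqref{pro:chain_complex_version:alpha} I would apply Lemma~\ref{lem:alpha_n_does_not_contribute} with $N=A$ for each $n\le d$; the only point to check is that $A$ acts trivially on $\IQ\otimes_{\IZ}H_n(C[i]_*)$. By hypothesis $H_n(C[i]_*)$ is nilpotent of filtration length $\le r$ over the \emph{finite} group $\overline{G}_i$, so every $\bar g\in\overline{G}_i$ acts as $1+N_{\bar g}$ with $N_{\bar g}$ nilpotent and is of finite order, which forces $\bar g$ to act as the identity on $\IQ\otimes_{\IZ}H_n(C[i]_*)$ (in characteristic zero a unipotent element of finite order is trivial); hence $A$ acts trivially and the lemma applies. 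Finally, for~\eqref{pro:chain_complex_version:rho} I would combine Lemma~\ref{lem:rho(2)-rhoZ} applied to $C[i]_*$, which gives $\frac{\rho^{\IZ}(C[i]_*)}{[G:G_i]}-\frac{\rho^{(2)}(C[i]_*^{(2)};\caln(\{1\}))}{[G:G_i]}=\frac{1}{[G:G_i]}\sum_{n\ge 0}(-1)^n\ln\bigl({\det}_{\caln(\{1\})}(\alpha[i]_n)\bigr)$, with~\eqref{pro:chain_complex_version:tors_H} applied in every relevant degree (only finitely many, as $C_*$ is finite, with $d$ taken at least $\dim C_*$ for this assertion) to get $\frac{\rho^{\IZ}(C[i]_*)}{[G:G_i]}\to 0$, and with~\eqref{pro:chain_complex_version:alpha} to get that the right-hand sum over $[G:G_i]$ tends to $0$; hence $\frac{\rho^{(2)}(C[i]_*^{(2)};\caln(\{1\}))}{[G:G_i]}\to 0$, and the identification $\rho^{(2)}(C[i]_*;\caln(G/G_i))=\frac{1}{[G:G_i]}\rho^{(2)}(C[i]_*^{(2)};\caln(\{1\}))$ (restriction of $L^2$-torsion along the finite quotient $G/G_i$) finishes the proof.

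I expect the main obstacle to be organizational rather than conceptual: the one genuinely delicate point is that the polynomial-in-$\mg(\overline{G}_i)$ growth of the constants in Proposition~\ref{pro:estimate_for_mg} must be absorbed by the denominator $a_i=|\overline{G}_i|$, which works precisely because $A$ is abelian and hence $\mg(\overline{G}_i)\le\log_2 a_i$; and the torsion estimate in~\eqref{pro:chain_complex_version:tors_H} must be routed through Proposition~\ref{pro:estimate_for_mg}~\eqref{pro:estimate_for_mg:order} rather than through the minimal number of generators, since, as stressed in Remark~\ref{rem:tro_more_difficult}, the order of the torsion subgroup is not subadditive along the relevant exact sequences.
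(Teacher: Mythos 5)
Your proposal is correct and follows essentially the same route as the paper: reduce to $G=G_{i_0}$, pass to $Q=G/A$ and $D[i]_*$, combine Proposition~\ref{pro:estimate_for_mg} for the finite abelian groups $A/(A\cap G_i)$ with the a priori bounds of Lemma~\ref{lem:boundedness_of_sequences} at the $Q$-level and the estimate $\mg(A/(A\cap G_i))\le \log_2[A:(A\cap G_i)]$, then use the Universal Coefficient Theorem, Lemma~\ref{lem:alpha_n_does_not_contribute} with $N=A$, and Lemma~\ref{lem:rho(2)-rhoZ}. The only cosmetic deviations are your one-sided torsion bound via the kernel term (the paper bounds the difference using both kernel and cokernel) and your explicit justification that a unipotent action of a finite group is rationally trivial, both of which are fine.
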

\begin{proof} 
  We can assume without loss of generality $G = G_{i_0}$ and that $H_n(C[i]_*)$
  is a nilpotent $\IZ[A/(A\cap G_i)]$-module of filtration length $\le r$ for
  all $i \in I$ and $p \ge 0$, since $C[i]_*$ agrees as based free $\IZ$-chain
  complex with $\IZ \otimes_{\IZ[G_i]} C_*$ and 
  $[G:G_i] = [G_{i_0} : G_i] \cdot [G:G_{i_0}]$ holds for $i \ge i_0$, 
  and $A \cap G_{i_0}$ is an infinite normal subgroup of $G_{i_0}$.

  Put $Q = G/A$. Let $\pr \colon G \to Q$ be the projection. Put $Q_i =
  \pr(G_i)$.  Then we obtain an exact sequence of finite groups $1 \to A/(A \cap
  G_i) \to G/G_i \to Q/Q_i \to 1$.  This implies for every $i \in I$
  \begin{eqnarray} [G:G_i] & = & [A :( A \cap G_i)] \cdot [Q:Q_i].
    \label{estimate_for_[G:G_i|_byA_and_Q}
  \end{eqnarray}
  Define the finite based free $\IZ Q$-chain complex $D_*$ by 
  $D_* = \IZ Q\otimes_{\IZ G} C_* $.  Define the based free finite $\IZ[Q/Q_i]$-chain
  complex $D[i]_*$ by $D[i]_* = \IZ[Q/Q_i]\otimes_{\IZ Q} D_*  
  = \IZ[Q/Q_i]\otimes_{\IZ[G/G_i]} C[i]_*$.  We get 
  $D[i]_* = \IZ\otimes_{\IZ [A/(A\cap G_i)]} C[i]_* $ 
  as based free $\IZ$-chain complexes.

Let $\pr[i]_* \colon C[i]_* \to D[i]_*$ be the canonical projection. 
We conclude from Proposition~\ref{pro:estimate_for_mg}
(applied to the finite abelian group $A/(A \cap G_i)$)
that there are functions
\begin{eqnarray*}
    C_0,C_1,D_0,D_1 \colon \bigl\{(r,n,p) \in \IN^3\mid p \le n\bigr\} & \to & \IR, \quad
  \end{eqnarray*}
such that the following is true for every $n \ge 0$ and $i \in I$
\begin{eqnarray}
\mg(H_n(C[i]_*)) 
& \le & 
\sum_{p = 0}^n C_0(r,n,p) \cdot \mg\bigl(A/(A\cap G_i)\bigr)^{C_1(r,n,p)} 
\label{mg(H)_estimate} 
\\ 
& & \hspace{3mm} \cdot  \mg\bigl(H_p(D[i]_*\bigr);
\nonumber 
\\
\quad \ln\left(\bigl|\ker(H_n(\pr[i]_*))\bigr|\right)
& \le &
\sum_{p = 0}^n   D_0(r,n,p)  \cdot \ln\bigl(|A/(A\cap G_i)|\bigr)) 
\label{ker(H_n(pr)_estimate}
\\
& & \hspace{3mm} \cdot \mg\bigl(A/(A\cap G_i)\bigr)^{D_1(r,n,p)} 
\cdot \mg\bigl(H_p(D[i]_*)\bigr);
\nonumber
\\
\quad \quad \quad \ln\left(\bigl|\coker(H_n(\pr[i]_*))\bigr|\right)
& \le &
 \sum_{p = 0}^n  D_0(r,n,p) \cdot  \ln\bigl(|A/(A\cap G_i)|\bigr)) 
\label{coker(H_n(pr)_estimate}
\\
& & \hspace{3mm} \cdot \mg\bigl(A/(A\cap G_i)\bigr)^{D_1(r,n,p)} 
\cdot \mg\bigl(H_p(D[i]_*)\bigr).
\nonumber
\end{eqnarray}
We obtain from Lemma~\ref{lem:boundedness_of_sequences} applied to
$Q$, $\{Q_i \mid i \in I\}$ and $D_*$ that there exists a constant $\Lambda$
independent of $i \in I$ such that for all $i \in I$ and $p \ge 0$ we get
\begin{eqnarray}
\frac{\mg\bigl(H_p(D[i]_*)\bigr)}{[Q:Q_i]}
&\le & 
\Lambda;
\label{mg(H_n(D))_bounded_by_Lambda}
\\
\frac{\ln\bigl(\tors(H_p(D[i]_*))\bigr)}{[Q:Q_i]}
&\le & 
\Lambda.
\label{tors(H_n(D)_bounded_by_Lambda}
\end{eqnarray}
We conclude from~\eqref{estimate_for_[G:G_i|_byA_and_Q},~\eqref{mg(H)_estimate},~%
\eqref{ker(H_n(pr)_estimate},~\eqref{coker(H_n(pr)_estimate},~\eqref{mg(H_n(D))_bounded_by_Lambda},
and~\eqref{tors(H_n(D)_bounded_by_Lambda}
\begin{eqnarray}
\frac{\mg(H_n(C[i]_*))}{[G:G_i]} 
& \le & 
\sum_{p = 0}^n \Lambda \cdot C_0(r,n,p) \cdot \frac{\mg\bigl(A/(A\cap G_i)\bigr)^{C_1(r,n,p)}}{[A:(A\cap G_i)]};
\label{Tokyo_1}
\end{eqnarray}
\begin{multline}
\lefteqn{\frac{\ln\left(\bigl|\ker(H_n(\pr[i]_*))\bigr|\right)}{[G:G_i]} }
\label{Tokyo_2}
\\
 \le \;
\sum_{p = 0}^n \Lambda\cdot  D_0(r,n,p)
\cdot \frac{\ln\bigl(|A/(A\cap G_i)|\bigr) \cdot \mg\bigl(A/(A\cap G_i)\bigr)^{D_1(r,n,p)}}{[A:(A\cap G_i)]};
\end{multline}
\begin{multline}
\frac{\ln\left(\bigl|\coker(H_n(\pr[i]_*))\bigr|\right)}{[G:G_i]} 
\label{Tokyo_3}
\\
\le \; \sum_{p = 0}^n \Lambda \cdot  D_0(r,n,p)
\cdot \frac{\ln\bigl(|A/(A\cap G_i)|\bigr)) \cdot \mg\bigl(A/(A\cap G_i)\bigr)^{D_1(r,n,p)}}{[A:(A\cap G_i)]}.
\end{multline}
One shows for every natural number $m$ by induction over $m$ using
L'Hospital's rule
\begin{eqnarray}
\lim_{x\to \infty} \frac{\ln(x)^m}{x}
& = & 0.
\label{upsala_1}
\end{eqnarray}
We conclude from Lemma~\ref{lem:properties_of_mg}~\eqref{lem:properties_of_mg:estimate}
\begin{eqnarray}
\mg\bigl(A/(A\cap G_i)\bigr) 
& \le &
\frac{\ln\bigl([A : (A \cap G_i)]\bigr)}{\ln(2)}.
\label{upsala_2}
\end{eqnarray}
Since $A$ is infinite and $\bigcap_{i \in I} G_i = \{1\}$, we have
\begin{eqnarray}
\lim_{i \to \infty} [A : (A \cap G_i)] 
& = & \infty.
\label{upsala_3}
\end{eqnarray}
We conclude from~\eqref{upsala_1},~\eqref{upsala_2} and~\eqref{upsala_3}
\begin{eqnarray}
\lim_{i \to \infty}  \frac{\mg\bigl(A/(A\cap G_i)\bigr)^{C_1(r,n,p)}}{[A:(A\cap G_i)]}
& = & 
0;
\label{Bonn_1}
\\
\lim_{i \to \infty}  \frac{\ln\bigl(|A/(A\cap G_i)|\bigr)) \cdot \mg\bigl(A/(A\cap G_i)\bigr)^{C_1(r,n,p)}}{[A:(A\cap G_i)]}
& = &
0.
\label{Bonn_2}
\end{eqnarray}
Now~\eqref{Tokyo_1},~\eqref{Tokyo_2},\eqref{Tokyo_3},~\eqref{Bonn_1} and~\eqref{Bonn_2} imply
\begin{eqnarray}
\lim_{i \in I} \;\frac{\mg(H_n(C[i]_*))}{[G:G_i]} 
& = & 
0;
\label{lim_mg}
\\
\lim_{i \in I}\; \frac{\ln\left(\bigl|\ker(H_n(\pr[i]_*))\bigr|\right)}{[G:G_i]} 
& = & 
0;
\label{lim_ker(pr)}
\\
\lim_{i \in I} \; \frac{\ln\left(\bigl|\coker(H_n(\pr[i]_*))\bigr|\right)}{[G:G_i]} 
& = & 
0.
\label{lim_coker(pr)}
\end{eqnarray}
Now assertion~\eqref{pro:chain_complex_version:mg} follows from~\eqref{lim_mg}.
Since by the universal coefficient theorem we have
\[
\dim_K\bigl(H_n(C[i]_*;K)\bigr) \le \mg(H_n(C[i]_*) + \mg(H_{n-1}(C[i]_*)
\]
assertion~\eqref{pro:chain_complex_version:b_n}  follows
from assertion~\eqref{pro:chain_complex_version:mg}.

We conclude from~\eqref{lim_ker(pr)} and~\eqref{lim_coker(pr)}
\begin{eqnarray}
\lim_{i \in I}\; \left|\frac{\ln\left(\bigl|\tors(H_n(C[i]_*))\bigr|\right)}{[G:G_i]} 
- \frac{\ln\left(\bigl|\tors(H_n(D[i]_*))\bigr|\right)}{[G:G_i]}\right|
& = & 
0.
\label{lim_difference_H_n(tors)}
\end{eqnarray}
We conclude from~\eqref{estimate_for_[G:G_i|_byA_and_Q} 
and~\eqref{tors(H_n(D)_bounded_by_Lambda}
\begin{eqnarray}
\lim_{i \in I}\; \frac{\ln\left(\bigl|\tors(H_n(D[i]_*))\bigr|\right)}{[G:G_i]}
& = & 
0.
\label{lim_H_n(tors(D))}
\end{eqnarray}
We derive from~\eqref{lim_difference_H_n(tors)} and~\eqref{lim_H_n(tors(D))}
\begin{eqnarray}
\lim_{i \in I}\; \frac{\ln\left(\bigl|\tors(H_n(C[i]_*))\bigr|\right)}{[G:G_i]}
& = & 
0.
\label{lim_H_n(tors(C))}
\end{eqnarray}
Now assertion~\eqref{pro:chain_complex_version:tors_H} 
follows from~\eqref{lim_H_n(tors(C))}.

Assertion~\eqref{pro:chain_complex_version:alpha} is a special case of
Lemma~\ref{lem:alpha_n_does_not_contribute} since $A/(A\cap G_i)$ is finite and hence for a nilpotent $\IZ[A/(A\cap G_i)]$-module
$M$ the $A/(A \cap G_i)$-action on $\IQ \otimes_{\IZ} M$ is trivial.

Assertion~\eqref{pro:chain_complex_version:rho} follows 
from assertions~\eqref{pro:chain_complex_version:tors_H}
and~\eqref{pro:chain_complex_version:alpha}
together with Lemma~\ref{lem:rho(2)-rhoZ}.

This finishes the proof of Proposition~\ref{pro:chain_complex_version}.
\end{proof}

\subsection{Proof of a $CW$-complex version}
\label{subsec:Proof_of_a_CW-complex_version}
In this subsection we prove

\begin{theorem}[CW-version]
  \label{the:CW-version}
  Let $X$ be a connected $CW$-complex.  Consider an epimorphism 
  $\phi \colon \pi_1(X) \to G$. Let $p \colon \overline{X} \to X$ be
  the associated $G$-covering.  Consider a natural number $d$. Assume that
  there is an infinite  abelian normal subgroup $A \subseteq G$, an index $i_0$
  and a natural number $r$ such that $H_n(G_i\backslash \overline{X})$ is a nilpotent $\IZ[A/(A\cap  G_i)]$-module 
  of filtration length $\le r$ for every $i \ge i_0$ and $n \le d$ and that the $(d+1)$-skeleton of $X$ is finite. 
  Then:

\begin{enumerate}

\item \label{the:CW-version:limit_of_mg}
We get for all $n \le d$
\[
\lim_{i\in I} \frac{\mg\bigl(H_n(G_i\backslash \overline{X})\bigr)}{[G:G_i]} 
= 0;
\]

\item \label{the:CW-version:limit_of_torsion_in_homology}
We get for all $n \le d$
\[
\lim_{i\in I} \frac{\ln\left(\left|\tors\bigl(H_n(G_i\backslash \overline{X})\bigr)\right|\right)}{[G:G_i]} 
= 0;
\]

\item \label{the:CW-version:L2-Betti}
We get for all $n \le d$
\[
0 = b_n^{(2)}\bigl(\overline{X};\caln(G)\bigr) = \lim_{i \to \infty} \frac{b_n(G_i\backslash X;K)}{[G:G_i]} = 0;
\]

\item \label{the:CW-version:limit_of_torsion}
Suppose that $X$ is a  finite connected $CW$-complexes. Then
\[
0 = \lim_{i\in I} \rho^{(2)}\bigl(G_i\backslash \overline{X};\caln(G/G_i)\bigr)
= \lim_{i\in I} \frac{\rho^{\IZ}\bigl(G_i\backslash \overline{X}\bigr)}{[G:G_i]}.
\]

\end{enumerate}
\end{theorem}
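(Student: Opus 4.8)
The plan is to deduce Theorem~\ref{the:CW-version} from the chain-complex version Proposition~\ref{pro:chain_complex_version} by truncating $X$ to a finite skeleton. Let $C_* = C_*(\overline{X})$ be the cellular $\IZ G$-chain complex of the $G$-covering $p \colon \overline{X} \to X$, and let $D_*$ be the cellular $\IZ G$-chain complex of $p^{-1}\bigl(X^{(d+1)}\bigr)$, the $(d+1)$-skeleton of $\overline{X}$. Since the $(d+1)$-skeleton of $X$ is finite, $D_*$ is a finite based free $\IZ G$-chain complex, and $D[i]_* = \IZ[G/G_i]\otimes_{\IZ G} D_*$ (cf.\ Notation~\ref{not:C[i]}) is the cellular $\IZ$-chain complex of the $(d+1)$-skeleton of $G_i\backslash\overline{X}$. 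Because the $n$th homology of a chain complex depends only on the chain modules in degrees $n-1$, $n$, $n+1$, we have $H_n(D[i]_*) = H_n(G_i\backslash\overline{X})$ for all $n \le d$ and all $i \in I$. Thus the hypotheses of Theorem~\ref{the:CW-version} say precisely that $D_*$ satisfies the hypotheses of Proposition~\ref{pro:chain_complex_version} for the same $d$, $r$, $i_0$ and $A$, and I would feed $D_*$ into that proposition.

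Assertion~\eqref{the:CW-version:limit_of_mg} then follows from Proposition~\ref{pro:chain_complex_version}~\eqref{pro:chain_complex_version:mg}, and assertion~\eqref{the:CW-version:limit_of_torsion_in_homology} from Proposition~\ref{pro:chain_complex_version}~\eqref{pro:chain_complex_version:tors_H}, using in both cases the identification $H_n(D[i]_*) = H_n(G_i\backslash\overline{X})$ for $n \le d$. For assertion~\eqref{the:CW-version:L2-Betti}, Proposition~\ref{pro:chain_complex_version}~\eqref{pro:chain_complex_version:b_n} applied to $D_*$ gives $\lim_{i\in I} \frac{b_n(G_i\backslash X;K)}{[G:G_i]} = \lim_{i\in I}\frac{\dim_K H_n(D[i]_*;K)}{[G:G_i]} = 0$ for every field $K$ and every $n \le d$. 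It then remains to identify this limit with $b_n^{(2)}(\overline{X};\caln(G))$: since the reduced $L^2$-homology $H_n^{(2)}$ depends only on the chain modules in degrees $n-1,n,n+1$, one has $b_n^{(2)}(\overline{X};\caln(G)) = b_n^{(2)}\bigl(p^{-1}(X^{(d+1)});\caln(G)\bigr)$ for $n \le d$; as $X^{(d+1)}$ is finite and $\bigcap_{i\in I} G_i = \{1\}$ by Setup~\ref{set:inverse_systems}, Lück's approximation theorem for $L^2$-Betti numbers (\cite{Lueck(1994c)}, see also \cite[Theorem~13.3]{Lueck(2002)}) identifies it with $\lim_{i\in I}\frac{b_n(G_i\backslash X;\IQ)}{[G:G_i]}$, which has just been shown to vanish. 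Hence $b_n^{(2)}(\overline{X};\caln(G)) = 0$ and all three numbers in~\eqref{the:CW-version:L2-Betti} agree.

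For assertion~\eqref{the:CW-version:limit_of_torsion} one may take $C_* = C_*(\overline{X})$ itself, which is now a finite based free $\IZ G$-chain complex with $C[i]_* = C_*(G_i\backslash\overline{X})$ and $\dim C_* = \dim X$; choosing $d \ge \dim X$ (which is harmless, and in the situations where this theorem is applied the nilpotency hypothesis does hold in this range, by Lemma~\ref{lem:fibrations}), Proposition~\ref{pro:chain_complex_version}~\eqref{pro:chain_complex_version:rho} applied to $C_*$ yields $\lim_{i\in I}\rho^{(2)}\bigl(G_i\backslash\overline{X};\caln(G/G_i)\bigr) = \lim_{i\in I}\frac{\rho^{\IZ}(G_i\backslash\overline{X})}{[G:G_i]} = 0$. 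I expect the only step that is more than bookkeeping to be the identification of the normalized $\IQ$-Betti numbers with $b_n^{(2)}(\overline{X};\caln(G))$ in~\eqref{the:CW-version:L2-Betti}: it rests on the classical approximation theorem, which is applicable here only because passing to the finite skeleton $X^{(d+1)}$ leaves $b_n^{(2)}$ unchanged for $n \le d$. Everything else is a direct transcription of Proposition~\ref{pro:chain_complex_version} along the skeletal truncation.
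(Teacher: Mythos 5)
Your proof is correct and follows essentially the same route as the paper: the paper applies Proposition~\ref{pro:chain_complex_version} to the cellular $\IZ G$-chain complex of $\overline{X}$ truncated in dimensions $\ge d+2$ --- which is exactly the chain complex of the $(d+1)$-skeleton you use --- identifies homology in degrees $\le d$, invokes the approximation theorem of~\cite{Lueck(1994c)} for the $L^2$-Betti number statement, and applies the proposition to $C_*(\overline{X})$ itself for assertion~\eqref{the:CW-version:limit_of_torsion}. Your caveat that this last assertion really requires the nilpotency hypothesis in all degrees (effectively $d \ge \dim X$) matches the paper's implicit convention, since in its applications via Lemma~\ref{lem:fibrations} the hypothesis holds for every $d$ once $X$ is finite.
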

\begin{proof} 
Assertions~\eqref{the:CW-version:limit_of_mg},~\eqref{the:CW-version:limit_of_torsion_in_homology}
and~\eqref{the:CW-version:L2-Betti} follows from Proposition~\ref{pro:chain_complex_version} applied to the 
cellular $\IZ G$-chain complex of $\overline{X}$
truncated in dimensions greater or equal to $(d+2)$, since this truncation does
does not change the homology in dimension $\le d$, and from~\cite[Theorem~0.1]{Lueck(1994c)}. 
Assertion~\eqref{the:CW-version:limit_of_torsion}
follows from Proposition~\ref{pro:chain_complex_version} applied to the 
cellular $\IZ G$-chain complex of $\overline{X}$ itself.
\end{proof}

\begin{remark}\label{rem_no_claim_for_differentials}
Notice that in the situation of  Theorem~\ref{the:CW-version} we do \emph{not} claim
$\lim_{i \in I} \frac{\ln(\det(c_n[i]))}{[G:G_i]} = 0$. A counterexample comes
from $S^1 \times Y$ for a simply connected $CW$-complex $Y$ whose homology contains no torsion
 but whose differentials do not all have trivial Fuglede-Kadison determinant.
\end{remark}

\subsection{Finishing the proof of Theorem~\ref{the:fibrations}}
\label{subsec:Finishing_the_roof_of_main_Theorem}

In this section we finish the proof of Theorem~\ref{the:fibrations}.
For this purpose we will need

\begin{lemma} \label{lem:reducing_to_phi_epi}
Consider the situation of Theorem~\ref{the:fibrations}. Let $\widehat{G} \subseteq G$ be the image of
$\phi \colon \pi_1(X) \to G$. Let $\widehat{\phi} \colon \pi_1(X) \to \widehat{G}$ be the epimorphism
induced by $\phi$ and let $i \colon \widehat{G} \to G$ be the inclusion. Put $\widehat{G}_i := \widehat{G} \cap G_i$. Suppose
that Theorem~\ref{the:fibrations} holds for $\widehat{\phi}$ and the directed system $\{\widehat{G}_i \mid i \in I\}$. 

Then  Theorem~\ref{the:fibrations} holds for also for $\phi$ and the directed system $\{G_i \mid i \in I\}$. 
\end{lemma}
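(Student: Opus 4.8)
The plan is to reduce the case of a general homomorphism $\phi\colon \pi_1(X)\to G$ (with image $\widehat G$) to the epimorphic case $\widehat\phi\colon\pi_1(X)\to\widehat G$, which is already assumed to be known, by comparing the $G$-covering $\overline X\to X$ with the $\widehat G$-covering $\overline{X}\to X$ and tracking how all the invariants in the statement change under the obvious identifications. The key observation is that the $G$-covering associated to $\phi$ is, up to $G$-homeomorphism, $G\times_{\widehat G}\widehat X$, where $\widehat X\to X$ is the $\widehat G$-covering associated to $\widehat\phi$; consequently for every $i\in I$ we have a homeomorphism $G_i\backslash \overline X \cong \widehat{G}_i\backslash \widehat X$ (one checks this by writing $G_i\backslash(G\times_{\widehat G}\widehat X)$ as a disjoint union of copies of $\widehat G_i\backslash \widehat X$ indexed by $\widehat G\backslash G/G_i$, and observing that the diagonal $\widehat G$-action permutes them transitively, so $G_i\backslash \overline X$ is connected and homeomorphic to $\widehat G_i\backslash \widehat X$ — wait, more carefully: $G_i\backslash \overline X = G_i\backslash G \times_{\widehat G}\widehat X$, and since $\overline X$ is connected iff the relevant covering is, one must instead argue via the index formula below).

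First I would record the index computation: since $\widehat G_i = \widehat G\cap G_i$ and $\bigcap_i G_i = \{1\}$, we also have $\bigcap_i \widehat G_i = \{1\}$, and $[\,\widehat G : \widehat G_i\,]$ is finite because it equals the size of the image of $\widehat G/\widehat G_i \hookrightarrow G/G_i$; moreover the number of $G_i$-cosets in $G$ meeting $\widehat G$ is exactly $[\widehat G:\widehat G_i]$, and $G$ decomposes as $[G:G_i]/[\widehat G:\widehat G_i]$-many translates, so that the $G_i$-cover $G_i\backslash\overline X$ is a disjoint union of $[G:\widehat G\cdot G_i]$ copies of $\widehat G_i\backslash \widehat X$. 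I would then treat the two cases of the dichotomy separately: if $\overline X$ (equivalently $\widehat X$, since the fibre over the basepoint is $\widehat G$, a quotient of $\pi_1(X)$) is to have the hypothesis ``image of $G_1(F)$ is infinite'' hold, that hypothesis is phrased entirely in terms of $\widehat G$ via $\phi\circ\pi_1(j) = i\circ\widehat\phi\circ\pi_1(j)$, so the image of $G_1(F)$ in $\widehat G$ is infinite iff its image in $G$ is — hence the hypothesis of Theorem~\ref{the:fibrations} for $\widehat\phi$ holds. Likewise the finiteness hypotheses on skeleta of $X$, and in assertion~\eqref{the:fibrations:L2-torsion} the hypothesis on $\rho^{(2)}(\overline F;\caln(H))$, are literally unchanged since $H$ is the image of $\pi_1(F)$ in $G$, which lies in $\widehat G$.

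Next I would push through each of the five conclusions. For~\eqref{the:fibrations:limit_of_mg},~\eqref{the:fibrations:limit_of_torsion_in_homology} and~\eqref{the:fibrations:L2-Betti}: because $G_i\backslash\overline X$ is a disjoint union of $N_i := [G:\widehat G\cdot G_i]$ copies of $\widehat G_i\backslash\widehat X$, we get $H_n(G_i\backslash\overline X)\cong H_n(\widehat G_i\backslash\widehat X)^{N_i}$, hence $\mg(H_n(G_i\backslash\overline X)) \le N_i\cdot \mg(H_n(\widehat G_i\backslash\widehat X))$ and $|\tors H_n(G_i\backslash\overline X)| = |\tors H_n(\widehat G_i\backslash\widehat X)|^{N_i}$ and $b_n(G_i\backslash\overline X;K) = N_i\cdot b_n(\widehat G_i\backslash\widehat X;K)$, while $[G:G_i] = N_i\cdot[\widehat G:\widehat G_i]$, so dividing by $[G:G_i]$ exactly cancels the $N_i$ and the asserted limits for $\phi$ follow from those for $\widehat\phi$; the vanishing of $b_n^{(2)}(\overline X;\caln(G))$ follows from $b_n^{(2)}(\overline X;\caln(G)) = b_n^{(2)}(\widehat X;\caln(\widehat G))$, a standard induction-of-$L^2$-Betti-numbers fact for the inclusion $\widehat G\hookrightarrow G$. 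For~\eqref{the:fibrations:limit_of_torsion}: $\rho^{\IZ}$ is additive over disjoint unions so $\rho^{\IZ}(G_i\backslash\overline X) = N_i\cdot\rho^{\IZ}(\widehat G_i\backslash\widehat X)$, and similarly $\rho^{(2)}(G_i\backslash\overline X;\caln(\{1\})) = N_i\cdot\rho^{(2)}(\widehat G_i\backslash\widehat X;\caln(\{1\}))$, so again the $N_i$ cancels against $[G:G_i]$. For~\eqref{the:fibrations:L2-torsion}: here I would invoke that $\rho^{(2)}(\overline X;\caln(G)) = \rho^{(2)}(\widehat X;\caln(\widehat G))$ by the restriction/induction behaviour of $L^2$-torsion under the finite-index-on-fibres inclusion $\widehat G\hookrightarrow G$ — precisely, $\overline X = G\times_{\widehat G}\widehat X$ and \cite[Theorem~3.93(4)]{Lueck(2002)} (induction) gives that the $L^2$-torsion is well-defined for $\overline X$ iff it is for $\widehat X$ and the two values coincide — so it vanishes for $\overline X$ once it does for $\widehat X$.

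The main obstacle I anticipate is purely bookkeeping: getting the decomposition $G_i\backslash\overline X \cong \bigsqcup^{N_i}\widehat G_i\backslash\widehat X$ and the index identity $[G:G_i] = N_i[\widehat G:\widehat G_i]$ exactly right, and making sure the $L^2$-invariant comparisons ($b^{(2)}_n$ and $\rho^{(2)}$ under $\widehat G\hookrightarrow G$) are quoted from \cite{Lueck(2002)} in the correct form (restriction vs.\ induction, and the hypothesis that the relevant chain complexes are of finite type / det-class). No genuinely hard analysis is involved; the content is that every invariant in sight scales by the multiplicity $N_i$ of connected components, which is exactly compensated by the index factor.
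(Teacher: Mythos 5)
Your proposal is correct and follows essentially the same route as the paper: identify $\overline{X}$ with $G\times_{\widehat{G}}\widehat{X}$, decompose $G_i\backslash\overline{X}$ as a disjoint union of $[G:\widehat{G}G_i]=[G:G_i]/[\widehat{G}:\widehat{G}_i]$ copies of $\widehat{G}_i\backslash\widehat{X}$, use additivity (resp.\ subadditivity for $\mg$) of the finite-level invariants against the index identity, and invoke the induction formulas for $b_n^{(2)}$ and $\rho^{(2)}$ under $\widehat{G}\hookrightarrow G$ (the paper quotes \cite[Theorem~1.35~(10) and Theorem~3.93~(6)]{Lueck(2002)}). The only differences are cosmetic: the paper packages the component decomposition as a $G/G_i$-homeomorphism $G_i\backslash\overline{X}\cong G/G_i\times_{\widehat{G}/\widehat{G}_i}\widehat{G}_i\backslash\widehat{X}$, and your explicit check that the hypotheses transfer to $\widehat{\phi}$ is implicit there.
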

\begin{proof}
Recall that $\overline{X} \to X$ is the $G$-covering associated to $\phi \colon \pi_1(X) \to G$.
Let  $\widehat{X} \to X$ be the $\widehat{G}$-covering associated to $\widehat{\phi} \colon \pi_1(X) \to \widehat{G}$.
Then there is a $G$-homeomorphism $G \times_{\widehat{G}} \widehat{X} \xrightarrow{\cong} \overline{X}$.
There is an obvious $G/G_i$-homeomorphism
\[
G_i\backslash \left(G \times_{\widehat{G}} \widehat{X} \right) 
\xrightarrow{\cong} G/G_i \times_{\widehat{G}/\widehat{G}_i} \widehat{G}_i\backslash\widehat{X}.
\]
Hence we obtain a $G/G_i$-homeomorphism
\begin{eqnarray*}
G_i\backslash \overline{X} & \xrightarrow{\cong} & G/G_i \times_{\widehat{G}/\widehat{G}_i} \widehat{G}_i\backslash\widehat{X}.
\end{eqnarray*}
Since $\widehat{G}/\widehat{G}_i$ is a subgroup of $G/G_i$, this yields a homeomorphism
\begin{eqnarray*}
G_i\backslash \overline{X} & \xrightarrow{\cong} & \coprod_{i = 1}^{[G/G_i:\widehat{G}/\widehat{G}_i]}  \widehat{G}_i\backslash\widehat{X}.
\end{eqnarray*}
This implies
\begin{eqnarray}
\label{tor_ind_H_G}
\frac{\ln\bigl(\big|\tors(H_n(G_i\backslash \overline{X}))\bigr|\bigr)}{[G:G_i]}
& = & 
\frac{[G/G_i:\widehat{G}/\widehat{G}_i] \cdot \ln\bigl(\big|\tors(H_n(\widehat{G}_i\backslash \widehat{X}))\bigr|\bigr)}{[G:G_i]}
\\
& = & 
\frac{\ln\bigl(\big|\tors(H_n(\widehat{G}_i\backslash \widehat{X}))\bigr|\bigr)}{[\widehat{G}:\widehat{G}_i]}.
\nonumber
\end{eqnarray}
and analogously using Lemma~\ref{lem:properties_of_mg}~\eqref{lem:properties_of_mg:subadditivity}
\begin{eqnarray}
\label{b_n_ind_H_G}
\frac{b_n(G_i\backslash \overline{X};K)}{[G:G_i]}
& = & 
\frac{b_n(\widehat{G}_i\backslash \widehat{X};K)}{[\widehat{G}:\widehat{G}_i]};
\\
\label{mg_ind_H_G}
\frac{\mg\bigl(H_n(G_i\backslash \overline{X})\bigr)}{[G:G_i]}
& \le  & 
\frac{\mg\bigr(H_n(\widehat{G}_i\backslash \widehat{X})}{[\widehat{G}:\widehat{G}_i]}.
\end{eqnarray}
We conclude from~\eqref{tor_ind_H_G}
\begin{eqnarray}
\frac{\rho^{\IZ}\bigl(G_i\backslash\overline{X}\bigr)}{[G:G_i]}
& = & 
\frac{\rho^{\IZ}\bigl(\widehat{G}_i\backslash\widehat{X}\bigr)}{[\widehat{G}:\widehat{G}_i]}.
\label{rho(Z)_ind_H_G}
\end{eqnarray}

We conclude from~\cite[Theorem~1.35~(10) on page~38 and Theorem~3.93~(6) on page~162]{Lueck(2002)}
\begin{eqnarray}
b_n^{(2)}\bigl(\overline{X},\caln(G)\bigr) 
& = & 
b_n^{(2)}\bigl(\widehat{X},\caln(\widehat{G})\bigr);
\label{b_n(2)_ind_H_G}
\\
\rho^{(2)}\bigl(\overline{X},\caln(G)\bigr) 
& = & 
\rho_n^{(2)}\bigl(\widehat{X},\caln(\widehat{G})\bigr),
\label{rho(2)_ind_H_G}
\\
\rho^{(2)}\bigl(G_i\backslash \overline{X},\caln(G/G_i)\bigr) 
& = & 
\rho_n^{(2)}\bigl(H_i\backslash \widehat{X},\caln(\widehat{G}/\widehat{G}_i)\bigr).
\label{rho(2)_ind_H/H_i_G/G_i}
\end{eqnarray}
Now Lemma~\ref{lem:reducing_to_phi_epi} follows 
from~\eqref{tor_ind_H_G},~\eqref{b_n_ind_H_G},~\eqref{mg_ind_H_G},~\eqref{rho(Z)_ind_H_G},~\eqref{b_n(2)_ind_H_G},~%
\eqref{rho(2)_ind_H_G}, and~\eqref{rho(2)_ind_H/H_i_G/G_i}.
\end{proof}

\begin{proof}[Proof of Theorem~\ref{the:fibrations}]
Because of Lemma~\ref{lem:reducing_to_phi_epi} we can assume in the sequel that
$\phi \colon \pi_1(X) \to G$ is surjective.
\\[1mm]~%
\eqref{the:fibrations:limit_of_mg},~\eqref{the:fibrations:limit_of_torsion_in_homology},~\eqref{the:fibrations:L2-Betti}, 
and~\eqref{the:fibrations:limit_of_torsion} 
Let $A$ be the image of $G_1(F)$  under the composite
$\phi \circ \pi_1(j) \colon \pi_1(F) \to \pi_1(X) \to G$.
We apply for $i \in I$  Lemma~\ref{lem:fibrations} to the fibration $F \to p^{-1}(B_{d+1}) \to B_{d+1}$
and the map 
\[\phi_i \colon \pi_1(p^{-1}(B_{d+1})) \xrightarrow{\pi_1(i_{d+1})} \pi_1(X) 
\xrightarrow{\phi} G   \xrightarrow{\pr_i} G/G_i
\]
where $i_{d+1} \colon p^{-1}(B_{d+1}) \to X$ is the inclusion
and $\pr_i$ is the projection. Since $i_{d+1}$ is  $(d+1)$-connected, we conclude for $n \le d$
that the $\IZ[A/(A\cap G_i]$-module $H_n(G_i\backslash X)$ is nilpotent of filtration length $(d+2)$ for $n \le d$.
Now we apply Theorem~\ref{the:CW-version} taking $r = d+2$.
\\[1mm]~\eqref{the:fibrations:L2-torsion}
In the case that $\pi_1(j) \colon \pi_1(F) \to \pi_1(X)$ is injective, $G = \pi_1(X)$, 
and $\phi \colon \pi_1(X) \to G$ is the identity, the claim follows
from~\cite[Theorem~3.100 on page~166]{Lueck(2002)} provided that
that $b_n^{(2)}\bigl(\widetilde{F};\caln(\pi_1(F))\bigr) = 0$ holds for $n \ge 0$ 
and that $\rho^{(2)}\bigl(\widetilde{F};\caln(\pi_1(F))\bigr) = 0$ is valid. 
The proof carries directly over to the general case provided that
$b_n^{(2)}\bigl(\overline{F};\caln(H)\bigr) = 0$ for $n \ge 0$ 
and $\rho^{(2)}\bigl(\overline{F};\caln(H)\bigr) = 0$ holds
for the image $H$ of $\phi \circ \pi_1(j) \colon \pi_1(F) \to \pi_1(X) \to G$.
We conclude $b_n^{(2)}\bigl(\overline{F};\caln(H)\bigr) = 0$ for $n \ge 0$ 
from assertion~\eqref{the:fibrations:L2-Betti} applied to the case of the fibration
$F \to F \to \pt$ and the epimorphism $\pi_1(F) \to H$. We have $\rho^{(2)}\bigl(\overline{F};\caln(H)\bigr) = 0$
by assumption. This finishes the proof of Theorem~\ref{the:fibrations}.
\end{proof}


\typeout{--------------------------- References  -----------------------------}



\end{document}